\renewcommand{\d}{\mathrm{d}}
\newcommand{\D}{\mathrm{D}}
\newcommand{\e}{\mathrm{e}}
\newtheorem{Thm}{Theorem}[section]
\newtheorem{Lem}[Thm]{Lemma}
\newtheorem{Prop}[Thm]{Proposition}
\newtheorem{Cor}[Thm]{Corollary}
\newtheorem{Rem}[Thm]{Remark}
\newtheorem{Def}[Thm]{Definition}
\theoremstyle{definition}
  \newtheorem*{proof1}{Proof of Theorem \ref{main_lift}}
  \theoremstyle{definition}
  \newtheorem*{proof2}{Proof of Theorem \ref{main_group}}
\newtheoremstyle{named}{}{}{\itshape}{}{\bfseries}{.}{.5em}{#1 #3}
\theoremstyle{named}
\def\R{\mathbb{R}}
\def\C{\mathbb{C}}
\def\Z{\mathbb{Z}}
\def\g{\mathfrak{g}}
\def\sl{\mathfrak{sl}}
\def\gl{\mathfrak{gl}}
\def\SL{\til{SL}}
\def\cD{\mathcal{D}}
\def\cL{\mathcal{L}}
\def\cN{\mathcal{N}}
\def\cO{\mathcal{O}}
\def\cQ{\mathcal{Q}}
\def\cS{\mathcal{S}}
\def\cU{\mathcal{U}}
\def\cV{\mathcal{V}}
\def\cZ{\mathcal{Z}}
\def\a{\alpha}
\def\b{\beta}
\def\c{\gamma}
\def\D{\Delta}
\def\d{\delta}
\def\e{\epsilon}
\def\h{\theta}
\def\l{\lambda}
\def\s{\sigma}
\def\t{\tau}
\def\Up{\Upsilon}
\def\w{\omega}
\def\bD{\mathbb{D}}
\def\bo{\textbf{o}}
\def\=>{\Longrightarrow}
\def\inj{\hookrightarrow}
\def\corr{\longleftrightarrow}
\def\to{\longrightarrow}
\def\tto{\longleftarrow}
\def\ox{\otimes}
\def\o+{\oplus}
\def\bo+{\bigoplus}
\def\x{\times}
\def\<{\langle}
\def\>{\rangle}
\def\({\left(}
\def\){\right)}
\def\oo{\infty}
\def\^{\wedge}
\def\+{\dagger}
\def\sub{\subset}
\def\sign{\mathrm{sign}}
\def\inv{^{-1}}
\def\half{\frac{1}{2}}
\def\dd[#1,#2]{\frac{d#1}{d#2}}
\def\del[#1,#2]{\frac{\partial #1}{\partial #2}}
\def\over[#1]{\overline{#1}}
\def\vec[#1]{\overrightarrow{#1}}
\def\tab{\;\;\;\;\;\;}
\def\dia{\diamondsuit}
\newcommand{\til}[1]{\widetilde{#1}}
\newcommand{\what}[1]{\widehat{#1}}
\newcommand{\xto}[1]{\xrightarrow{#1}}
\newcommand{\veca}[2][cccccccccccccccccccccccccccccccccccccccccc]{\left(\begin{array}{#1}#2 \\ \end{array} \right)}
\newcommand{\case}[2][cccccccccccccccccccccccccccccccccccccccccc]{\left\{\begin{array}{#1}#2 \\ \end{array}\right.}
\newcommand{\Eq}[1]{\begin{align}#1\end{align}}
\newcommand{\Eqn}[1]{\begin{align*}#1\end{align*}}
\begin{document}
\title{ $\cN=2$ Super-Teichm\"uller Theory}

\author{Ivan C. H. Ip}
\address[Ivan C. H. Ip]{Department of Mathematics, Hong Kong University of Science and Technology (HKUST), Hong Kong}
\email{ivan.ip@ust.hk}
\urladdr{http://www.math.ust.hk/~ivanip}

\author{Robert C. Penner}
\address[R. C. Penner]{Institut des Hautes \' Etudes Scientifiques, Bur-sur-Yvette, France; Department of Mathematics, UCLA, 
Los Angeles, CA}
\email{rpenner@math.ucla.edu, rpenner@ihes.fr}

\author{Anton M. Zeitlin}
\address[Anton M. Zeitlin]{Department of Mathematics, Louisiana State University, Baton Rouge, USA; 
IPME RAS, St. Petersburg}
\email{zeitlin@lsu.edu}
\urladdr{http://math.lsu.edu/~zeitlin}
\date{\today}

\numberwithin{equation}{section}

\maketitle

\begin{abstract}
Based on earlier work of the latter two named authors on the higher super-Teichm\"uller space with $\cN=1$, a component
of the flat $OSp(1|2)$
connections on a punctured surface, here we extend to the case $\cN=2$ of flat $OSp(2|2)$ connections.
Indeed, we construct here coordinates on the higher super-Teichm\"uller space of a surface $F$ with at least one puncture associated to the supergroup $OSp(2|2)$, which in particular specializes to give another treatment for $\mathcal{N}=1$ which is simpler than the earlier work.   The Minkowski space in the current case, where the corresponding super Fuchsian groups act, is replaced by the superspace $\mathbb{R}^{2,2|4}$, and the familiar lambda lengths are extended by odd invariants of triples of special isotropic vectors in  $\mathbb{R}^{2,2|4}$ as well as extra bosonic parameters, which we call ratios, defining a flat $\mathbb{R}_{+}$-connection on $F$. 
As in the pure bosonic or $\cN=1$ cases, we derive the analogue of Ptolemy transformations for all these new variables.
\end{abstract}

\tableofcontents

\newpage
\section{Introduction}\label{sec:intro}

\subsection{Brief overview of Teichm\"uller and super-Teichm\"uller theory}

The three cases $\cN=0,1,2$, where $\cN$ is half the number of odd generators, correspond to the respective Lie (super)  groups $G=PSL_2({\mathbb R}), OSp(1|2)$ and $OSp(2|2)$, with $PSL_2({\mathbb R})$ denoting the M\"obius group and OSp the orthosymplectic groups \cite{kac,dict}. The (super) Teichm\"uller space is defined as
a component of the moduli space
${\rm Hom}(\pi_1(F),G)/G$
of flat $G$-connections on the surface $F$, namely,
\Eqn{T_G(F):={\rm Hom}'(\pi_1(F),G)/G,\label{TF}}
where $G$ acts naturally by conjugation and
${\rm Hom}'$ denotes those
faithful (i.e., injective) representations $\rho:\pi_1(F)\to G$
onto a discrete subgroup of the Lie (super) group $G$
(i.e., the identity of $G$ is isolated in $\rho(\pi_1(F))$)
that satisfy the further condition that the value of $\rho$ on a 
loop that becomes null homotopic upon replacing a puncture
must be a parabolic element (i.e., have trace $\pm2$) in the case of $G=PSL_2({\mathbb R})$ and project to such a
parabolic Fuchsian transformation in case $G=OSp(1|2)$ or $OSp(2|2)$\footnote{This condition requires the image of $\pi_1(F)$ in the $\cN=2$ case to belong to the subgroup of $OSp(2|2)$ with the same Lie algebra, which we denote as $\widetilde{SL}(1|2)$. The reason for this notation is that this  subgroup is isomorphic to the semidirect product of a certain involution on the Lie algebra 
$\mathfrak{sl}(1|2)\simeq\mathfrak{osp}(2|2)$ and the connected component of identity of supergroup $SL(1|2)$   (for more details see Appendix \ref{sec:SL12}). }.

In fact, it was a useful innovation already in the bosonic case $\cN=0$ in \cite{P1} to require at least one puncture and to work in Minkowski space ${\mathbb R}^{2,1}$,
where $PSL_2({\mathbb R})\approx SO_+(2,1)$ agrees with the component of the identity in the group of Minkowski isometries.
Upon identifying the open positive light-cone with the space of all horocycles, the $\lambda$-length (i.e., the square root of the inner product of two isotropic vectors) is precisely the square root of the exponential of signed hyperbolic lengths between corresponding horocycles.  In case $\cN=1$, the analogous action of $OSp(1|2)$ as a subgroup of the Minkowski isometries $OSp(2,1|2)$ acting on ${\mathbb R}^{2,1|2}$ was exploited in \cite{PZ}.  Here we
study an $OSp(2|2)$ subgroup of $OSp(2,2|4)$ acting on 
the superspace ${\mathbb R}^{2,2|4}$.
An ideal triangulation $\Delta$ of $F$ (i.e., a collection of disjointly embedded arcs 
decomposing $F$ into triangles with vertices at the punctures) provides the convenient 
index set for the collection of $\lambda$-lengths with different choices of ideal triangulation
playing the role of different bases for the coordinate system; flips, (i.e., remove an edge $e$ of $\Delta$ to produce a complementary  quadrilateral with frontier $a,b,c,d$, and replace with the other diagonal $f$ of this quadrilateral) provide the basic transformations
relating ideal triangulations with the bosonic lambda lengths governed by the Ptolemy relation $ef=ac+bd$.  Dual to $\Delta$ (with one trivalent vertex
for each triangle in $F-\cup\Delta$ and one edge connecting each pair of vertices whose corresponding triangles
share an edge), there is a fatgraph $\tau$ embedded in $F$.
Part of the utility of $\lambda$-lengths comes from their role in a recursive construction of the so-called lift 
$\ell: \widetilde\Delta_\infty\to{\mathcal L}_0$, where $\widetilde\Delta_\infty$ denotes the set of ideal
points at infinity of a lift of $\Delta$ to the universal cover of $F$ and ${\mathcal L}_0$ denotes an appropriate set of isotropic vectors in Minkowksi space.  This paradigmatic role in case $\cN=0$ carries over
to $\cN=1,2$ albeit in the more complicated circumstance of dependence on spin structures on $F$; indeed on $\cN$ many
spin structures insofar as a component of super-Teichmueller space $T_{OSp(1|2)}(F)$  is determined by a spin structure and a component of super-Teichmueller space $T_{OSp(2|2)}(F)$ by two independent spin structures, see Section 1.2.  and Theorem \ref{compn}.

In each case of $\cN=0,1,2$, our coordinates are defined on a ${\mathbb R}_+^s$-bundle, the decorated Teichm\"uller space
$\til{T}(F)$ over $T(F)$, which amounts to the set of all possible Fuchsian lifts. Here $s>0$ is the number of punctures of the surface $F$.  It is only in the bosonic case $\cN=0$ that these extra parameters, the decorations, admit an explicit geometric interpretation as $s$-tuples of lengths of horocycles, one horocycle about each puncture.
However, in all three case $\cN=0,1,2$,
one can pass from $\lambda$-lengths to cross ratios $\mathcal{X}=\frac{ac}{bd}$ on each edge of $\Delta$ to describe coordinates on the undecorated (super) Teichm\"uller space itself, where the product of cross ratios
about each puncture must equal unity.  There is also the notion of ``surfaces with holes", where one drops
this condition on cross ratios and the punctures can open to oriented boundary components, cf. \cite{P2}.
  
In this paper, we study in detail the case of $\cN=2$, which is important on its own since it is known that $\cN=2$ super-Riemann surfaces are dual to $(1|1)$-dimensional complex supermanifolds. Thus the super-Fuchsian group we obtain correspond to a very important class of $(1|1)$-supermanifolds, namely the ones which could be obtained by uniformization. Also, from a certain point of view in physics \cite{drs}, $\cN=2$ supermoduli space may be even more important than $\cN=1$. To understand the main improvements obtained in this paper, let us first review the construction of \cite{PZ} in the case of $\cN=1$. In the remainder of the paper, we shall work over the Grassmann algebra $\cS_\R$ (see Appendix \ref{sec:SL12}).

\subsection{Review of the main results for $\cN=1$} 
In \cite{PZ} we studied the super-Teichm\"uller space ${T}_{OSp(1|2)}(F)$ defined in \eqref{TF}, where the surface $F$ has at least one puncture and negative Euler characteristic. The image of the fundamental group produces a generalization of the standard Fuchsian group $\Gamma$, which acts on a super-analogue of the upper half-plane $H^+$ producing $\cN=1$ super-Riemann surfaces as a factor $H^+/\Gamma$ (cf. \cite{crabin}).

The goal of the paper \cite{PZ} was to construct the analogue of Penner coordinates on 
the decorated $\cN=1$ super-Teichm\"uller space $\til{T}_{OSp(1|2)}(F)$, i.e. on $\mathbb{R}^s_+$-bundle 
$\til{{T}}_{OSp(1|2)}(F)=\mathbb{R}^s_+\times{T}_{OSp(1|2)}(F)$. The difficulty in constructing those coordinates is that ${{T}}_{OSp(1|2)}(F)$ (unlike the standard  ${T}_{PSL_2(\mathbb{R})}(F)$)
has many connected components enumerated by spin structures, which is due to the fact that 
the pure even subgroup of $OSp(1|2)$ is $SL_2(\mathbb{R})$ instead of 
$PSL_2(\mathbb{R})$. Thus, to proceed further it is necessary to have a suitable combinatorial description of the spin structures.

In \cite{PZ} we described three equivalent descriptions of spin structures on $F$:
\begin{itemize}
\item $\mathbb{Z}_2$-valued quadratic forms on $H_1(F,\mathbb{Z}_2)$-homology with respect to the standard intersection pairing;
\item  Lifts $\til{\rho}:\pi_1(F)\to SL(2,\R)$ of the Fuchsian representation $\rho:\pi_1(F)\to PSL(2,\R)$;
\item Classes of orientations on fatgraphs of $F$.
\end{itemize}
While the equivalence between first two was known, see \cite{Nat}, 
the third one, which is purely combinatorial, was novel and crucial for the main result of \cite{PZ}. 

Let us describe it in more detail: Consider the trivalent {\it fatgraph} $\tau$ (i.e., a fatgraph all of whose vertices have valence three), corresponding to the Riemann surface $F$, which is homotopy equivalent to $F$ with cyclic orderings on half-edges for every vertex \cite{P2} induced by the orientation of the surface. 
As discussed in the introduction, there is a one-to-one correspondence between ideal triangulations and trivalent fatgraphs induced by Poincar\'e duality in the surface. Let $\w$ be an orientation on the edges of $\t$. One can define a \emph{fatgraph reflection} at a vertex $v$ of $(\t,\w)$ as a reversal of the orientations of $\w$ on every edge of $\t$ incident to $v$. 

Let $\mathcal{O}(\t)$ be the equivalence classes of orientations on a trivalent fatgraph $\tau$ of $F$, where $\omega_1\sim \omega_2$ if and only if $\omega_1$ and $\omega_2$ differ by a finite number of fatgraph reflections. In \cite{PZ}, we identified such classes of orientations on fatgraphs with the spin structures on $F$ described via quadratic forms and lifts. 
In this paper we investigate this relation even further, deriving explicit formulas for the quadratic form on every fatgraph cycle.  
For example, the paths corresponding to the boundary cycles on the fatgraph (i.e., the punctures of $F$) are divided into two classes depending on the parity of the number $k$  of edges with orientation opposite to the canonical orientation of $\gamma$. These punctures are called  Ramond (R) when $k$ is even, and Neveu-Schwarz (NS) when $k$ is odd,  based on the value of the corresponding quadratic form.  

In \cite{PZ}, we also described how orientations change under the flip transformations. In this paper we present a unifying single-picture description, see  Figure \ref{flipgraphint}, 
\begin{figure}[h!]
\begin{center}

\begin{tikzpicture}[ultra thick, baseline=1cm]
\draw (0,0)--(210:1) node[above] at (210:0.7){$\e_2$};
\draw (0,0)--(330:1) node[above] at (330:0.7){$\e_4$};
    \draw[ 
 	ultra thick,
        decoration={markings, mark=at position 0.5 with {\arrow{>}}},
        postaction={decorate}
        ]
        (0,0) -- (0,2);
\draw[yshift=2cm] (0,0)--(30:1) node[below] at (30:0.7){$\e_3$};
\draw[yshift=2cm] (0,0)--(150:1) node[below] at (150:0.7){$\e_1$};
\end{tikzpicture}
\begin{tikzpicture}[baseline]
\draw[->, thick](0,0)--(2,0);
\node[above] at (1,0) {};
\node at (-1,0){};
\node at (3,0){};
\end{tikzpicture}
\begin{tikzpicture}[ultra thick, baseline]
\draw (0,0)--(120:1) node[above] at (100:0.3){$\e_1$};
\draw (0,0)--(240:1) node[below] at (260:0.3){$\e_2$};
    \draw[ 
        decoration={markings, mark=at position 0.5 with {\arrow{<}}},
        postaction={decorate}
        ]
        (0,0) -- (2,0);
\draw[xshift=2cm] (0,0)--(60:1) node[above] at (100:0.3){$-\e_3$};
\draw [xshift=2cm](0,0)--(-60:1) node[below] at (-80:0.3){$\e_4$};
\end{tikzpicture}
\end{center}
\caption{Spin graph evolution}
\label{flipgraphint}
\end{figure}
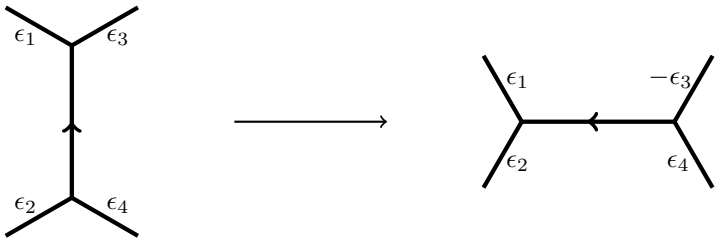
where $\epsilon_i$ stand for orientations on edges and extra minus signs stand for orientation reversal.

Following the original approach of Penner \cite{P2} and combining it with the description of spin structures, we obtain the Main Theorem stated in \cite{PZ}.

\begin{Thm} 
{\rm i)} The components of $\til{T}_{OSp(1|2)}(F)$ are determined by the space of spin structures on $F$. For each component $C$ of $\til{T}_{OSp(1|2)}(F)$, there are global affine coordinates on $C$ given by assigning to a triangulation $\Delta$ of $F$, 
\begin{itemize}
\item one even coordinate called the $\l$-length for each edge; 
\item one odd coordinate called the $\mu$-invariant for each triangle, taken modulo an overall change of sign.
\end{itemize} In particular we have a real-analytic homeomorphism: 
${C\to \mathbb{R}_{>0}^{6g-6+3s|4g-4+2s}/\mathbb{Z}_2.}$\\

{\rm ii)} The super Ptolemy transformations \cite{PZ} provide the analytic relations between coordinates assigned to different choice of triangulation $\Delta'$ of $F$, namely upon flip transformation. Explicitly (see Figure \ref{ptolemyint}),  when all $a,b,c,d$ are different edges of the triangulations of $F$, the Ptolemy transformations are as follows:

\begin{figure}[h!]

\centering

\begin{tikzpicture}[scale=0.5, baseline,ultra thick]

\draw (0,0)--(3,0)--(60:3)--cycle;

\draw (0,0)--(3,0)--(-60:3)--cycle;

\draw node[above] at (70:1.5){$a$};

\draw node[above] at (30:2.8){$b$};

\draw node[below] at (-30:2.8){$c$};

\draw node[below=-0.1] at (-70:1.5){$d$};

\draw node[above] at (1.5,-0.15){$e$};

\draw node[left] at (0,0) {};

\draw node[above] at (60:3) {};

\draw node[right] at (3,0) {};

\draw node[below] at (-60:3) {};

\draw node at (1.5,1){$\theta$};

\draw node at (1.5,-1){$\sigma$};

\end{tikzpicture}
\begin{tikzpicture}[baseline]

\draw[->, thick](0,0)--(1,0);

\node[above]  at (0.5,0) {};

\end{tikzpicture}
\begin{tikzpicture}[scale=0.5, baseline,ultra thick]

\draw (0,0)--(60:3)--(-60:3)--cycle;

\draw (3,0)--(60:3)--(-60:3)--cycle;

\draw node[above] at (70:1.5){$a$};

\draw node[above] at (30:2.8){$b$};

\draw node[below] at (-30:2.8){$c$};

\draw node[below=-0.1] at (-70:1.5){$d$};

\draw node[left] at (1.7,1){$f$};

\draw node[left] at (0,0) {};

\draw node[above] at (60:3) {};

\draw node[right] at (3,0) {};

\draw node[below] at (-60:3) {};

\draw node at (0.8,0){$\mu$};

\draw node at (2.2,0){$\nu$};

\end{tikzpicture}
\caption{$N=1$ Ptolemy transformation}
\label{ptolemyint}
\end{figure}
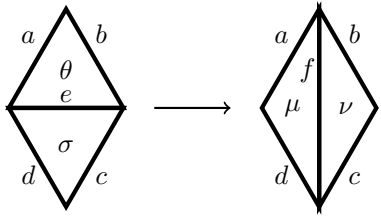
$$
ef=(ac+bd)\Big(1+\frac{\sigma\theta\sqrt{\mathcal{X}}}{1+\mathcal{X}}\Big),\quad \nu=\frac{\sigma+\theta\sqrt{\mathcal{X}}}{\sqrt{1+\mathcal{X}}},\quad
\mu=\frac{\sigma\sqrt{\mathcal{X}}-\theta}{\sqrt{1+\mathcal{X}}},
$$
where $\mathcal{X}=\frac{ac}{bd}$,  so that the evolution of arrows is as in  Figure \ref{flipgraphint}.
\end{Thm}

The whole idea of proof is based on the action of $OSp(1|2)$ in the super-Minkowski space 
$\mathbb{R}^{2,1|2}$ and the explicit construction of the images of ${\rm Hom}'$ of the group elements of $\pi_1(F)$ which depends on the spin structure. While in the paper \cite{PZ} one had to use  necessarily a bipartite fatgraph and a specific set of spin structures on it, the improved construction in this paper allows us to abolish the use of bipartite graph and start from any configuration.  

One can remove the decoration, passing from $\til{T}_{OSp(1|2)}(F)$ to ${T}_{OSp(1|2)}(F)$
 in the following way. To every edge $e$ (see Figure \ref{ptolemyint}) we can associate the {\it shear coordinate} $z_e:=\log (\frac{ac}{bd})$. These parameters satisfy a linear relation for every puncture, and together with the odd variables they form a set of coordinates on $T(F)$, thus allowing the removal of the decoration i.e., collapsing the $\mathbb{R}_+^s$-fiber, as it was in the pure even case. Here we mention that there is a physically and algebro-geometrically interesting refinement of $T(F)$ studied in \cite{IPZ2}, corresponding to the removal of certain odd degrees of freedom associated with R-punctures. 

\subsection{Outline of the paper}\label{subsec:sub}
Section \ref{sec:lightcone} is devoted to the description of superspace $\mathbb{R}^{2,2|4}$ modeled as the space of adjoint representations of the supergroup $\widetilde{SL}(1|2)\subset OSp(2|2)$. We fix notations and describe explicitly the action of certain important generators of $\widetilde{SL}(1|2)$. 

Again, as in \cite{PZ}, we take the orbit of the highest weight vector in adjoint representation as the light cone $\mathcal{L}_0$. We derive necessary functional relations on the components of the light-cone vectors. This is the subject of Section \ref{sec:orbits}.

In Section \ref{sec:orbitspair} we describe orbits of pairs and triples of vectors in $\mathcal{L}_0$. For the pairs such orbits are classified by a single even invariant $\lambda$-length, corresponding to the pairing of those two vectors. It turns out that the orbit of triple of linearly independent vectors in $\mathcal{L}_0$ is described by means of three $\lambda$-lengths and two odd parameters $(\theta_1,\theta_2)$ modulo permutation and rescaling $(\theta_1,\theta_2)\to (a\theta_1,a^{-1}\theta_2)$, where $a$ is any invertible even element. 

The invariants of a quadruple of points include one more even parameter in addition to 2 pairs of odd invariants and 5 $\lambda$-lengths. This extra invariant can be heuristically interpreted as a ``ratio" of the odd data for the two triples of points combined into the quadruple. In Section \ref{sec:basic} we describe in detail the orbits of 4 points, its invariants and related ambiguities for putting these 4 points in certain standard position.

In Section \ref{sec:spin} we return to the geometry of triangulated Riemann surfaces, recall and reformulate necessary facts about spin structures and connections on fatgraphs. In particular, we recall the description of spin structures given in \cite{PZ} based on equivalence classes of orientations on fatgraphs. 

Section \ref{sec:teich} is devoted to the main subject of the paper: description of coordinates on the $\cN=2$ super-Teichm\"uller space. Let $F:=F_g^s$ be a Riemann surface with genus $g\geq 0$ and $s\geq 1$ punctures such that $2g+s-2>0$. As usual, the construction of such coordinates involves lifting certain data assigned to the triangulation $\Delta$, or equivalently to the fatgraph $\tau$ dual to $\Delta$, to the light cone $\mathcal{L}_0$.  
The data giving the coordinate system $\til{C}(F,\D)$ are as follows:
\begin{itemize}\item we assign to each edge of $\D$ a positive even coordinate $e$;\\
\item we assign to each triangle of $\D$ two odd coordinates $(\h_1,\h_2)$;\\
\item for each triangle of $\D$, we assign to each of its edge $e$ a positive even coordinate $h_e$, called the \emph{ratio}, such that if $h_e$ and $h_e'$ are assigned to two triangles sharing the same edge $e$, then we have $h_e h_e'=1$.
\end{itemize}
The odd coordinates are defined up to an overall sign changes $\h_i\corr -\h_i$, as well as an overall involution $(\h_1,\h_2)\corr (\h_2,\h_1)$.

In particular, this assignment implies that the ratios $\{h_e\}$ uniquely define an $\R_+$-graph connection on $\t$. One can define the following action of the gauge transformations on the set of coordinates:  
if $h_a,h_b,h_e$ are ratios assigned to a triangle $T$ with odd coordinate $(\h_1,\h_2)$, then a \emph{vertex rescaling at $T$} is the following transformation:
\Eq{
(h_a,h_b,h_e,\h_1,\h_2)\mapsto (u h_a,u h_b,u h_e,u\inv\h_1,u\h_2)
} for some $u>0$, and all other coordinates fixed. 
We say that two coordinate vectors of $\til{C}(F,\D)$ are equivalent if they are related by a finite number of such vertex rescalings (i.e., gauge transformations). In particular, the underlying $\R_+$-graph connections on $\t$ are equivalent.

 Let $C(F,\D):=\til{C}(F,\D)/\sim$ be the equivalence classes of coordinate vectors. Then it can be represented by coordinates with $h_a h_b h_e=1$ for the ratios of a common triangle. This implies that
\Eq{
C(F,\D)\simeq \R_+^{8g+4s-7|8g+4s-8}/\Z_2\x\Z_2
} in accordance with the dimension for $\cN=2$ super-Teichm\"uller space given in \cite{Nat} (with an extra $s$-dimension given by decorations).

We can state the first main result of the paper:

\begin{Thm} Fix $F,\D,\t$ as before. Let 
$\w_\sigma$ and $\w_\iota$ be orientations
on $\tau$ representing respective spin structures $s_\sigma$ and $s_\iota$
on $F$.
Given a coordinate vector
$\vec[c]\in \til{C}(F,\D)$, there exists a map called the lift,
\Eqn{
\ell=\ell_{\w_{\sigma},\w_{\iota}}: \til{\D}_\oo \to \cL_0,}
from the vertices $\widetilde\Delta^\infty$ at infinity of a lift of $\Delta$ to the universal cover of $F$
to an appropriate set ${\mathcal L}_0$ of isotropic vectors in Minkowski space,
where $\ell$  is uniquely determined up to post-composition by $\SL(1|2)$ under certain admissibility conditions and only depends on the equivalence class $C(F,\D)$ of the coordinates. Moreover, there is a representation $\what{\rho}:\pi_1:=\pi_1(F)\to \SL(1|2)$, uniquely determined up to conjugacy by an element of $\SL(1|2)$, such that
\begin{itemize}\item[(1)] $\ell$ is $\pi_1$-equivariant, i.e., $\what{\rho}(\c)(\ell(a))=\ell(\c(a))$ for each $\c\in\pi_1$ and $a\in \til{\D}_\oo$;\\
\item[(2)] $\what{\rho}$ is a super Fuchsian representation, i.e., the natural projection $$\rho:\pi_1\xto{\what{\rho}} \SL(1|2) \to SL(2,\R)\to PSL(2,\R)$$ is a Fuchsian representation;\\
\item[(3)] the lift $\til{\rho}:\pi_1\xto{\what{\rho}} \SL(1|2) \to SL(2,\R)$ of $\rho$ does not depend on $\w_{\iota}$, and the space of all such lifts is in one-to-one correspondence with the spin structures $[\w_{\sigma}]\in\cO(\t)$ as equivalence classes of orientations on $\t$ as in section \ref{sec:spin}.
\end{itemize}
\end{Thm}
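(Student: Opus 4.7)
The plan is to follow the recursive construction paradigm from \cite{P1} and \cite{PZ}, now adapted to accommodate both the extra odd coordinates and the $\R_+$-graph connection given by the ratios. First I would choose a base triangle $T_0$ in a lift $\til{\Delta}$ of $\Delta$ to the universal cover, together with one of its ideal vertices as a root. Using the classification of orbits of triples of isotropic vectors established in Section \ref{sec:basic} (three $\l$-lengths together with two odd parameters modulo $(\h_1,\h_2)\mapsto(a\h_1,a\inv\h_2)$), I would place the three vertices of $T_0$ in a standard position in $\cL_0\subset\R^{2,2|4}$. The orientations $\w_\sigma$ and $\w_\iota$ are used precisely at this stage to fix the sign ambiguities $\h_i\corr -\h_i$ and the lift from $PSL(2,\R)$ to $SL(2,\R)$, exactly as in \cite{PZ} but with the $\Z_2\x\Z_2$ symmetry now broken by the pair of orientations.

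Next I would extend $\ell$ recursively across each adjacent triangle. Given the placement on one triangle, crossing an edge $e$ requires the following data: the $\l$-length $e$ on the shared edge, the odd coordinates $(\h_1,\h_2)$ on the new triangle, and crucially the ratio $h_e$ assigned at the edge-triangle incidence. The orbit analysis of quadruples in Section \ref{sec:basic} shows that this data uniquely specifies the fourth vertex in $\cL_0$, with $h_e$ playing the role of the extra bosonic invariant identified as the ``ratio" of odd data. The constraint $h_e h_e'=1$ across an edge ensures that the two triangles sharing $e$ produce compatible placements, which is exactly the statement that the $\R_+$-connection on $\t$ is flat along the edge. Once this step is in place, I would then verify closure of the lift: going around any puncture the composed monodromy must preserve the lift, which will follow from the parabolic condition and the product-of-ratios condition for each puncture loop.

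The representation $\what\rho$ is then defined by the standard monodromy recipe: for each $\c\in\pi_1(F)$, let $\what\rho(\c)$ be the unique element of $\SL(1|2)$ that sends the lift of $T_0$ to the lift of $\c\cdot T_0$. Equivariance (1) is tautological from this definition, while the independence from the choice of $T_0$ gives conjugation ambiguity. For (2), I would show that the underlying bosonic $\l$-lengths reconstruct a Penner decorated hyperbolic structure, so that the image in $PSL(2,\R)$ is Fuchsian by the bosonic theorem; discreteness and faithfulness descend from there. For (3), I would separate the dependencies: $\w_\iota$ only affects the sign of the ``second" odd coordinate, which lies in the kernel of the natural projection $\SL(1|2)\to SL(2,\R)$, so $\til\rho$ is independent of $\w_\iota$; meanwhile, changing $\w_\sigma$ within its equivalence class $[\w_\sigma]\in\cO(\t)$ amounts to a vertex rescaling combined with an $\SL(1|2)$-conjugation, so that the $SL(2,\R)$-lifts are in bijection with $\cO(\t)$ as in Section \ref{sec:spin}. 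Invariance of $\ell$ under vertex rescaling must also be checked: because each rescaling $(h_a,h_b,h_e,\h_1,\h_2)\mapsto(\a h_a,\a h_b,\a h_e,\a\inv\h_1,\a\h_2)$ corresponds precisely to acting by a Cartan element of $\SL(1|2)$ at the vertex, this gives only an overall $\SL(1|2)$-ambiguity that is absorbed into the conjugacy class of $\what\rho$.

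The main obstacle I anticipate is disentangling the three simultaneous ambiguities present in the $\cN=2$ theory: the $\Z_2\x\Z_2$ sign symmetry of the odd coordinates, the gauge freedom given by vertex rescalings of the $\R_+$-connection, and the spin-structure indeterminacy in lifting from $PSL(2,\R)$ to $SL(2,\R)$. Showing that exactly one parameter family of these is eaten by each layer of the construction---with $\w_\iota$ parameterizing the kernel of projection to the bosonic subgroup and $\w_\sigma$ parameterizing the genuine spin-structure degree of freedom---is the delicate point that distinguishes $\cN=2$ from the earlier $\cN=1$ case, and it is what ultimately gives the clean bijection in statement (3).
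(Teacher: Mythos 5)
Your overall architecture coincides with the paper's: a base triangle placed in standard position via the orbit classification, recursive extension across edges using the quadruple analysis with the ratio $h_e$ as the extra invariant, the monodromy definition of $\what{\rho}$ on the free generators attached to a fundamental domain, Fuchsian-ness of $\rho$ from the bosonic reduction, and absorption of vertex rescalings into $\cZ$-elements (concretely, $P^{\a\inv\h,\pm}_{\a h_a,\a h_e}=P^{\h,\pm}_{h_a,h_e}$, so a rescaling at a non-base triangle leaves the admissible transformations unchanged). Two steps, however, need repair. The ``closure'' verification you propose is both unnecessary and unprovable from your stated inputs: the lift is defined on the ideal triangulation of the universal cover, whose dual fatgraph is a tree, so no loop consistency ever arises; and neither parabolicity nor a ``product of ratios equals one around each puncture'' condition is part of the hypotheses (the ratios form an arbitrary flat $\R_+$-connection), so you could not derive closure from them even if it were needed. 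The peripheral monodromy is simply whatever element of $\SL(1|2)$ the construction yields.

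More seriously, the mechanism you give for part (3) has the roles of the two orientations garbled. In the construction, $\w_\sigma$ (through the orientation of each fatgraph edge) governs the sign change $\s\corr-\s$, implemented by $Z_{-1}$, while the comparison $\d_{\w_\sigma,\w_\iota}$ governs the order swap $\s\corr\s^{op}$, implemented by the involution $\Psi$. The reason $\til{\rho}$ is independent of $\w_\iota$ is that $\Psi$ projects to the identity of $SL(2,\R)$; a sign change of odd coordinates is \emph{not} in the kernel, since $Z_{-1}\mapsto -I$, and that is precisely why $\w_\sigma$ does determine the lift $\til{\rho}$. If $\w_\iota$ controlled a sign, as you assert, $\til{\rho}$ would depend on it. Relatedly, a fatgraph reflection (the move generating equivalence in $\cO(\t)$) flips the orientation of every edge at a vertex; it is not a vertex rescaling by positive $\a$, and conflating the two does not give the bijection. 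The needed argument is a parity count: a reflection reverses an even number (zero or two) of edges along any cycle, and since $-I$ is central the trace signs $q([\c])$ are unchanged, so $\til{\rho}$ depends only on $[\w_\sigma]$; conversely, equal lifts force an even number of disagreements along every cycle, hence equal classes in $\cO(\t)$. Without this count you establish neither well-definedness on $[\w_\sigma]$ nor injectivity of the correspondence in (3).
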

We call the space of all such lifts as in the theorem above the {\it decorated $\cN=2$ 
super-Teichm\"uller} space $S\til{T}(F)$ to distinguish it from the decorated $\cN=1$ super-Teichm\"uller space studied in \cite{PZ}.

In the construction of the lift $\ell$, which depends on two spin structures, we note that one spin structure controls the sign change for the odd invariants of attached triangles and the other one controls the change of the order of fermions. 
It is also important to mention that proving a similar result in \cite{PZ} we relied on an ad hoc construction involving bipartite graphs. The current result  
can be easily reduced to $\cN=1$ case, by considering lifts with equal value of odd coordinates per triangle and therefore abolishing the dependence on the second spin structure. The proof in this paper follows the general
constructions of \cite{PZ} now in this case $\cN=2$ but is
different conceptually from the one in \cite{PZ}: we no longer require the bipartite fatgraph as the starting point of the construction. Instead we use any fatgraph and given classes of orientations in order to construct the lift, so that $\cN=2$ is in fact more
closely analogous to the bosonic case $\cN=0$ in \cite{P1} than the treatment for $\cN=1$ in \cite{PZ}.

The next theorem, which follows from the one above, describes coordinates on $S\til{T}(F)$.

\begin{Thm} The components of $S\til{T}(F)$ are determined by two spin structures $s_{\sigma},s_{\iota}\in \cO(\t)$. For fixed representatives of the spin structures, $C(F,\D)$ provides global analytic coordinates on each component of $\R_+^{8g+4s-7|8g+4s-8}/\Z_2\x\Z_2$. 
\end{Thm}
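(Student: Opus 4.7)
The plan is as follows. For fixed representatives $\w_{\sigma},\w_{\iota}$, the previous theorem already supplies a map
$$\Phi_{\w_\sigma,\w_\iota}: C(F,\D) \to S\til{T}(F)$$
sending an equivalence class of coordinates to the conjugacy class of the pair $(\ell,\what{\rho})$. The goal is to show that $\Phi_{\w_\sigma,\w_\iota}$ is an analytic bijection onto a single connected component, and that as $([\w_\sigma],[\w_\iota])$ ranges over $\cO(\t)\x\cO(\t)$, every component of $S\til{T}(F)$ is hit exactly once.

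For injectivity, I would invert $\Phi_{\w_\sigma,\w_\iota}$ explicitly. Given any representative of $(\ell,\what{\rho})$, the edge $\l$-lengths $e$ are recovered as the even invariants of pairs of vectors from Section \ref{sec:orbitspair}; the pairs $(\h_1,\h_2)$ per triangle are recovered modulo the $\Z_2\x\Z_2$ ambiguity as odd invariants of triples in the same section; and the ratios $h_e$ are read off from the quadruple invariants of Section \ref{sec:basic} after normalizing each triangle by vertex rescaling to the gauge slice $h_a h_b h_e=1$. Since each of these invariants is $\SL(1|2)$-invariant, the recovered coordinate vector descends to a well-defined element of $C(F,\D)$. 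The dimension of the image matches the claimed $\R_+^{8g+4s-7|8g+4s-8}$: one $\l$-length per edge contributes $6g+3s-6$; the ratios modulo vertex rescaling define a flat $\R_+$-connection on $\t$ and thus contribute $\dim H^1(F;\R_+)=2g+s-1$; and the two odd parameters per triangle contribute $2(4g+2s-4)=8g+4s-8$.

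For surjectivity, I would start from any representative $\what{\rho}$ in the target component, normalize an initial triangle $T_0$ of $\til{\D}$ to a standard triple of isotropic vectors (fixing the $\SL(1|2)$ conjugacy and a gauge at $T_0$), and propagate the lift $\ell$ across adjacent triangles using the inductive recipe of the preceding proof. The $\pi_1$-equivariance of $\what{\rho}$ guarantees consistency around any loop in $\til{\D}$, and the orientation data $(\w_\sigma,\w_\iota)$ resolve the sign- and fermion-ordering ambiguities at each propagation step. Extracting invariants of the resulting lift returns coordinates mapping back to $\what{\rho}$ under $\Phi_{\w_\sigma,\w_\iota}$, proving surjectivity. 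Analyticity in both directions is automatic: the lift depends analytically on the coordinates through the explicit formulas of Sections \ref{sec:orbits}--\ref{sec:basic}, and the inverse invariants are polynomial in the vector components.

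The main obstacle is the component enumeration. The preceding theorem identifies the $SL(2,\R)$-Fuchsian lift $\til{\rho}$ with a choice $[\w_\sigma]\in\cO(\t)$, recovering the $\cN=1$ component count of \cite{PZ}. The second class $[\w_\iota]$ must then be shown to genuinely refine the components of $S\til{T}(F)$: the fermion-ordering rule it prescribes at each triangle yields super Fuchsian representations that are not identifiable via the residual $\Z_2\x\Z_2$ coordinate symmetry. The delicate point is to match the total count $|\cO(\t)|^2$ against the component count of $S\til{T}(F)$ predicted by \cite{Nat}, verifying that inequivalent orientation classes yield non-conjugate representations and that no component is missed. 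Once this matching is established, the dimension count above combined with analyticity of $\Phi_{\w_\sigma,\w_\iota}$ identifies each component with the stated space $\R_+^{8g+4s-7|8g+4s-8}/\Z_2\x\Z_2$.
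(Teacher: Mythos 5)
Your core strategy---recovering the coordinates from a lift via the intrinsic invariants of pairs, triples and quadruples in $\cL_0$ (Lemma \ref{rst}, Propositions \ref{orbit}, \ref{sigma} and \ref{ratiounique}), and noting that the construction of Theorem \ref{main_lift} provides the inverse---is exactly the paper's argument for the coordinate statement, and your dimension count ($6g+3s-6$ lambda-lengths, $2g+s-1$ for the $\R_+$-connection, $2(4g+2s-4)$ odd parameters) is correct. Your added surjectivity discussion (propagating a lift from an arbitrary $\what{\rho}$) is left implicit in the paper but is consistent with it.

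There are, however, two issues. First, you omit the step the paper actually needs for the first sentence of the theorem: the statement refers to spin structures $s_\sigma, s_\iota \in \cO(\t)$, i.e.\ \emph{equivalence classes} of orientations, whereas the lift is constructed from chosen representative orientations $\w_\sigma,\w_\iota$. One must check that replacing a representative by one differing by a fatgraph reflection at a vertex changes the coordinates only by $\h\to-\h$ (for $\w_\sigma$) or $\h\to\h^{op}$ (for $\w_\iota$) at the corresponding triangle, which is absorbed by the $\Z_2\x\Z_2$ quotient and hence defines the same lift; this is the entire second half of the paper's proof and is what makes the component assignment well defined on $\cO(\t)\x\cO(\t)$. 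Second, you yourself flag the matching of $|\cO(\t)|^2$ against the component count of $S\til{T}(F)$ as a ``delicate point ... once this matching is established,'' i.e.\ you do not carry it out. The paper does not attempt a count against \cite{Nat} either; it instead relies on part (3) of Theorem \ref{main_group} (the bijection between $SL(2,\R)$-lifts $\til{\rho}$ and classes $[\w_\sigma]$) together with the well-definedness just described, so that distinct classes land in distinct components by construction. As written, your proposal leaves its central claim about components contingent on an unproved step, so it is incomplete rather than wrong.
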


In Section \ref{sec:ptolemy} we describe our second main result.  We  show explicitly how the coordinates behave under triangulation change and, in other words, give the description of generalized Ptolemy transformations. 

\begin{Thm}
In the generic situation where all $a,b,c,d$ are different edges of the triangulations of $F$, the Ptolemy transformations, corresponding to Figure \ref{ptolemylabel} below
\begin{figure}[h!]
\centering
\begin{tikzpicture}[scale=0.7, baseline,ultra thick]
\draw (0,0)--(3,0)--(60:3)--cycle;
\draw (0,0)--(3,0)--(-60:3)--cycle;
\draw node[above] at (70:1.5){$a$};
\draw node[above] at (30:2.8){$b$};
\draw node[below] at (-30:2.8){$c$};
\draw node[below=-0.1] at (-70:1.5){$d$};
\draw node[above] at (1.5,0){$e$};
\draw node[left] at (0,0) {};
\draw node[above] at (60:3) {};
\draw node[right] at (3,0) {};
\draw node[below] at (-60:3) {};
\draw node at (1.5,1){$\h_1,\h_2$};
\draw node at (1.5,-1){$\s_1,\s_2$};
\end{tikzpicture}
\begin{tikzpicture}[baseline]
\draw[->, thick](0,0)--(1,0);
\node[above]  at (0.5,0) {};
\end{tikzpicture}
\begin{tikzpicture}[scale=0.7, baseline,ultra thick]
\draw (0,0)--(60:3)--(-60:3)--cycle;
\draw (3,0)--(60:3)--(-60:3)--cycle;
\draw node[above] at (70:1.5){$a$};
\draw node[above] at (30:2.8){$b$};
\draw node[below] at (-30:2.8){$c$};
\draw node[below=-0.1] at (-70:1.5){$d$};
\draw node[left] at (1.5,1){$f$};
\draw node[left] at (0,0) {};
\draw node[above] at (60:3) {};
\draw node[right] at (3,0) {};
\draw node[below] at (-60:3) {};
\draw node at (0.8,0){$\mu_1,\mu_2$};
\draw node at (2.2,0){$\nu_1,\nu_2$};
\end{tikzpicture}\\
\begin{tikzpicture}[scale=0.8, baseline,ultra thick]
\draw (0,0)--(3,0)--(60:3)--cycle;
\draw (0,0)--(3,0)--(-60:3)--cycle;
\draw node[right] (ha) at (60:1.2){$h_a$};
\draw node[right =0.2 of ha] {$h_b$};
\draw node[right] (hd) at (-60:1.2){$h_d$};
\draw node[right =0.2 of hd] {$h_c$};
\draw node[above] at (1.5,0){$h_e$};
\draw node[below] at (1.5,0){$h_e\inv$};
\draw node[left] at (0,0) {};
\draw node[above] at (60:3) {};
\draw node[right] at (3,0) {};
\draw node[below] at (-60:3) {};
\end{tikzpicture}
\begin{tikzpicture}[baseline]
\draw[->, thick](0,0)--(1,0);
\node[above]  at (0.5,0) {};
\end{tikzpicture}
\begin{tikzpicture}[scale=0.8, baseline,ultra thick]
\draw (0,0)--(60:3)--(-60:3)--cycle;
\draw (3,0)--(60:3)--(-60:3)--cycle;
\draw node[right] (ha) at (60:1.2){$h_a'$};
\draw node[right =0.2 of ha] {$h_b'$};
\draw node[right] (hd) at (-60:1.2){$h_d'$};
\draw node[right =0.2 of hd] {$h_c'$};
\draw node[left] at (1.5,0){$h_f$};
\draw node[right] at (1.5,0){$h_f\inv$};
\draw node[left] at (0,0) {};
\draw node[above] at (60:3) {};
\draw node[right] at (3,0) {};
\draw node[below] at (-60:3) {};
\end{tikzpicture}\\
\caption{$N=2$ Ptolemy transformation}\label{ptolemylabel}
\end{figure}
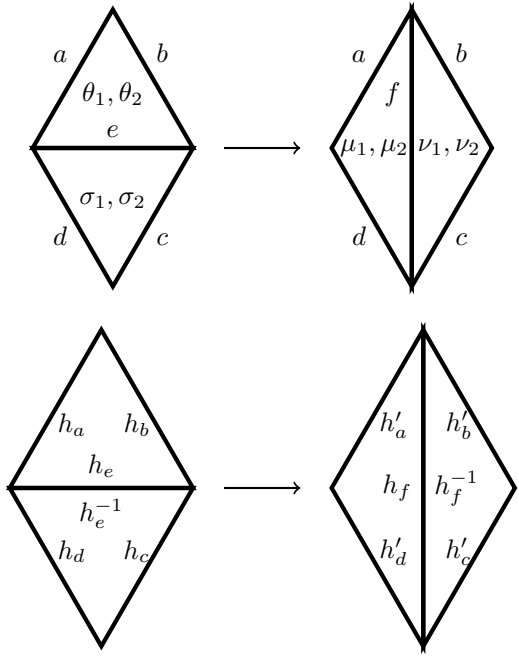
are as follows:
\Eq{
ef&=(ac+bd)\left(1+\frac{h_e\inv\s_1\h_2}{2(\sqrt\mathcal{X}+\sqrt\mathcal{X}\inv)}+\frac{h_e\s_2\h_1}{2(\sqrt\mathcal{X}+\sqrt\mathcal{X}\inv)}\right),
}
\Eq{
\mu_1=\frac{h_e\h_1+\sqrt\mathcal{X} \s_1}{\cD},\tab\mu_2=\frac{h_e\inv\h_2+\sqrt\mathcal{X} \s_2}{\cD}, \\
\nu_1=\frac{\s_1-\sqrt\mathcal{X} h_e\h_1}{\cD},\tab \nu_2=\frac{\s_2-\sqrt\mathcal{X} h_e\inv\h_2}{\cD},
}
\Eq{
h_a' = \frac{h_a}{h_e c_\h},\tab h_b' = \frac{h_b c_\h}{h_e},\tab h_c' =h_c\frac{c_\h}{c_\mu}, \tab h_d'=h_d\frac{c_\nu}{c_\h},\tab h_f =\frac{c_\s}{c_\h^2},
}
where \Eq{
\cD:=\sqrt{1+\mathcal{X}+\frac{\sqrt\mathcal{X}}{2}(h_e\inv\s_1\h_2+h_e\s_2\h_1)}
} and $c_{\theta}:=1+\frac{\theta_1\theta_2}{6}$, while the signs of the fermions follow from the construction associated to the spin graph evolution rule in Figure \ref{flipgraphint}, where $\e_i$ denote the orientations of the edges.
\end{Thm}
In Appendix \ref{sec:specialPtolemy}, we also write down formulas for flip transformation in the other cases where some edges are identified. In particular we write explicit formulas in the case of the once-punctured torus and the thrice-punctured sphere.

One prominent structure from \cite{PZ} which we did not generalize to the $\cN=2$ case is the analogue of the Weil-Petersson K\"ahler two-form.  We leave this and other questions for subsequent publications.

\textbf{Acknowledgment} The first author is supported by JSPS KAKENHI Grant Numbers JP26800004, JP16K17571 and Top Global University Project, MEXT, Japan. 
\section{Light cone and $\SL(1|2)$-action}\label{sec:lightcone}
Let $SL(1|2)$ be the $(2|1)\x (2|1)$ supermatrices with superdeterminant equal to 1, and $\sl(1|2)$ the corresponding Lie superalgebra described in Appendix \ref{sec:SL12}. Also let $\SL(1|2)$ be the semidirect product $\Psi\ltimes SL(1|2)_0$ of the involution $\Psi$ and the component $SL(1|2)_0$ described in Definition \ref{SL0}. (See also Remark \ref{SL12choice} for the explanation of the choice of $SL(1|2)$ over the more conventional $OSp(2|2)$.) Consider the adjoint action of $SL(1|2)$ acting on the pure even elements of the form
\Eqn{
M:&=x_1E-x_2F-yH+\xi_1^+e_1^++\xi_2^+e_2^++\xi_1^-e_1^-+\xi_2^-e_2^-+z(h_1+h_2)\\
&=\veca{z-y&\xi_1^+&x_1\\\xi_1^-&2z&\xi_2^+\\-x_2&\xi_2^-&z+y}\\
&:=(x_1,x_2,y,z|\xi_1^+,\xi_2^+,\xi_1^-,\xi_2^-)\in\R^{2,2|4},
}
where the invariant quadratic form providing the Minkowski space structure $\R^{2,2|4}$ is given by the supertrace:
\Eq{
\<M,M\>&:=-\frac{1}{2} str(M^2)\nonumber\\
&=-\frac12 (-2x_1x_2+2y^2-2\xi_1^+\xi_1^--2\xi_2^+\xi_2^--2z^2)\nonumber\\
&=x_1x_2-y^2+\xi_1^+\xi_1^--\xi_2^+\xi_2^-+z^2.
}
\begin{Def}
The light cone $\cL\subset \R^{2,2|4}$ is defined to be the set 
\Eq{
\cL=\{(x_1,x_2,y,z|\xi_1^+,\xi_2^+,\xi_1^-,\xi_2^-)\in\R^{2,2|4}: x_1x_2-y^2+\xi_1^+\xi_1^--\xi_2^+\xi_2^-+z^2=0\},
}
i.e., the points $M\in \R^{2,2|4}$ such that $\<M,M\>=0$.
\end{Def}

In particular, we have an inner product
\Eq{
\label{pairing}
\<M,M'\> = \frac{1}{2}(x_1x_2'+x_2x_1')-yy'+\frac{1}{2}(\xi_1^+{\xi_1^-}'-\xi_2^+{\xi_2^-}'-\xi_1^-{\xi_1^+}'+\xi_2^-{\xi_2^+}')+zz',
}
and we shall refer to the square root of $\<M,M'\>$ as the \emph{$\l$-length} between the points $M$ and $M'$.

\begin{Def}\label{notation}
We denote certain elements of $SL(1|2)$ as follows:
\Eqn{
D_{a,c}&:=\veca{a&0&0\\0&ac&0\\0&0&c},\tab D_a:=D_{a,a\inv},\tab Z_a:=D_{a,a},\tab J:=\veca{0&0&1\\0&1&0\\-1&0&0},\\
U_\a &:=\veca{1&\a&0\\0&1&0\\0&0&1},\tab V_\b:=\veca{1&0&0\\0&1&\b\\0&0&0},\tab W_b=\veca{1&0&b\\0&1&0\\0&0&1}.
}
\end{Def}
\begin{Lem}\label{action} The adjoint actions on $\cL$ by conjugation are given by
\Eqn{
D_{a,c}\cdot (x_1,x_2,y,z|\xi_1^+,\xi_2^+,\xi_1^-,\xi_2^-)&=(ac\inv x_1,a\inv cx_2,y,z|c\inv \xi_1^+,a\xi_2^+,c\xi_1^-,a\inv \xi_2^-),\\
D_a\cdot (x_1,x_2,y,z|\xi_1^+,\xi_2^+,\xi_1^-,\xi_2^-)&=(a^2 x_1,a^{-2} x_2,y,z|a \xi_1^+,a\xi_2^+,a\inv\xi_1^-,a\inv \xi_2^-),\\
Z_a\cdot (x_1,x_2,y,z|\xi_1^+,\xi_2^+,\xi_1^-,\xi_2^-)&= (x_1,x_2,y,z|a\inv \xi_1^+,a\xi_2^+,a\xi_1^-,a\inv \xi_2^-),\\
J\cdot (x_1,x_2,y,z|\xi_1^+,\xi_2^+,\xi_1^-,\xi_2^-)&= (x_2,x_1,-y,z| \xi_2^-,-\xi_1^-,\xi_2^+,-\xi_1^+),
}
\Eqn{
&U_\a\cdot (x_1,x_2,y,z|\xi_1^+,\xi_2^+,\xi_1^-,\xi_2^-)\\
&\tab=(x_1-\a\xi_2^+,x_2,y+\frac{\a\xi_1^-}{2},z-\frac{\a\xi_1^-}{2}|\xi_1^++(y+z)\a,\xi_2^+,\xi_1^-,\xi_2^-+x_2\a),\\
&V_\b\cdot (x_1,x_2,y,z|\xi_1^+,\xi_2^+,\xi_1^-,\xi_2^-)\\
&\tab=(x_1-\b\xi_1^+,x_2,y-\frac{\b\xi_2^-}{2},z-\frac{\b\xi_2^-}{2}|\xi_1^+,\xi_2^++(y-z)\b,\xi_1^--x_2\b,\xi_2^-),\\
&W_b\cdot (x_1,x_2,y,z|\xi_1^+,\xi_2^+,\xi_1^-,\xi_2^-)\\
&\tab=(x_1+b^2x_2+2by,x_2,y+bx_2,z|\xi_1^++b\xi_2^-,-b\xi_1^-+\xi_2^+,\xi_1^-,\xi_2^-).
}
\end{Lem}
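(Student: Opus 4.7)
The proof is a direct matrix computation. The strategy is to use the dictionary
\Eqn{
M_{11}=z-y,\; M_{22}=2z,\; M_{33}=z+y,\; M_{13}=x_1,\; M_{31}=-x_2,\; M_{12}=\xi_1^+,\; M_{21}=\xi_1^-,\; M_{23}=\xi_2^+,\; M_{32}=\xi_2^-
}
to pass between coordinates and the nine entries of the supermatrix $M$, then to compute $gMg^{-1}$ for each of the seven listed generators as an ordinary $3\times3$ supermatrix product, and finally to read off the new coordinates entry by entry.

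The three diagonal cases are immediate: if $g=\mathrm{diag}(g_1,g_2,g_3)$ then $(gMg^{-1})_{ij}=g_ig_j^{-1}M_{ij}$, and the formulas for $D_{a,c}$ follow upon taking $(g_1,g_2,g_3)=(a,ac,c)$, while $D_a$ and $Z_a$ are the specializations $c=a^{-1}$ and $c=a$. For the Weyl element one has $J^{-1}=\begin{pmatrix}0&0&-1\\0&1&0\\1&0&0\end{pmatrix}$, and conjugation by $J$ interchanges the first and third rows and columns with signs dictated by this inverse, which directly yields the stated permutation of coordinates.

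The three unipotent cases are handled uniformly by writing $g=I+N$ with $N$ a rank-one nilpotent; in each case $N^2=0$ (using $\alpha^2=\beta^2=0$ for $U_\alpha,V_\beta$ and $E_{13}^2=0$ for $W_b$), so $g^{-1}=I-N$ and
\Eqn{
gMg^{-1}=M+NM-MN-NMN.
}
Here $NM$ affects only the single row selected by $N$ and $MN$ only the single column, while $NMN$ vanishes for $U_\alpha$ and $V_\beta$ (it produces a factor $\alpha\xi\alpha=0$ by anticommutation) and for $W_b$ contributes the single term $b^2M_{31}=-b^2x_2$ at position $(1,3)$, accounting for the $b^2x_2$ term in the formula. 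The remaining entries in each case follow at once.

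The only real subtlety is the $\Z_2$-graded bookkeeping: products of the odd parameters $\alpha,\beta$ with the odd coordinates $\xi_i^\pm$ must be reordered according to the Koszul sign rule, so the canonical ordering of odd products implicit in the statement must be respected throughout; one must also track the minus sign in $M_{31}=-x_2$ and the factor of two in $M_{22}=2z$ when translating entries back to coordinates. Once these conventions are fixed, each of the seven displayed formulas emerges from at most a few lines of explicit calculation, and no further ingredients are needed.
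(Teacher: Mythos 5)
Your proposal is correct and follows essentially the same route as the paper: a direct computation of $gMg^{-1}$ using the supermatrix multiplication convention, reading off the new coordinates from the entries (the paper only explicates the case $U_\alpha$ and leaves the rest as "usual matrix calculation"). Your uniform organization of the three unipotent cases via $g=I+N$, $g^{-1}=I-N$ with the $NMN$ term tracked separately is a slightly tidier bookkeeping of the same calculation, and your attention to the Koszul signs and to the dictionary $M_{31}=-x_2$, $M_{22}=2z$ is exactly where the care is needed.
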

\begin{proof}
The actions of bosonic elements follows from the usual matrix calculation. Let us explicate the action for $U_\a$. The case for $V_\b$ is similar. We have
$U_\a\inv = \veca{1&-\a&0\\0&1&0\\0&0&1}$, hence following the convention in \eqref{signs},
\Eqn{
&U_\a\cdot (x_1,x_2,y,z|\xi_1^+,\xi_2^+,\xi_1^-,\xi_2^-)\\
 &=\veca{1&\a&0\\0&1&0\\0&0&1}\veca{z-y&\xi_1^+&x_1\\\xi_1^-&2z&\xi_2^+\\-x_2&\xi_2^-&z+y}\veca{1&-\a&0\\0&1&0\\0&0&1}\\
 &=\veca{z-y-\a\xi_1^-&\xi_1^++2z\a&x_1-\a\xi_2^+\\\xi_1^-&2z&\xi_2^+\\-x_2&\xi_2^-&z+y}\veca{1&-\a&0\\0&1&0\\0&0&1}\\
 &=\veca{z-y-\a\xi_1^-&\xi_1^++(y+z)\a&x_1-\a\xi_2^+\\\xi_1^-&2z+\xi_1^-\a&\xi_2^+\\-x_2&\xi_2^-+x_2\a&z+y}\\
 &=(x_1-\a\xi_2^+,x_2,y+\frac{\a\xi_1^-}{2},z-\frac{\a\xi_1^-}{2}|\xi_1^++(y+z)\a,\xi_2^+,\xi_1^-,\xi_2^-+x_2\a).
 }
\end{proof}
Let us also consider the action of the  involution $\Psi$ defined in Proposition \ref{involution}.
\begin{Lem} The involution $\Psi$ acts on $\cL$ by
\Eqn{
\Psi\cdot (x_1,x_2,y,z|\xi_1^+,\xi_2^+,\xi_1^-,\xi_2^-)=(x_1,x_2,y,-z|\xi_2^+,\xi_1^+,-\xi_2^-,-\xi_1^-).
}
\end{Lem}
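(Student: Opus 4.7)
The plan is to mimic the proof of Lemma \ref{action}: write down the explicit matrix realization of $\Psi$ from Proposition \ref{involution} in Appendix \ref{sec:SL12}, then compute the conjugation $\Psi M \Psi^{-1}$ directly on the general element
\[
M=\begin{pmatrix} z-y & \xi_1^+ & x_1\\ \xi_1^- & 2z & \xi_2^+\\ -x_2 & \xi_2^- & z+y\end{pmatrix},
\]
and read off the nine entries to recover the $\R^{2,2|4}$-coordinates of $\Psi\cdot M$.

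First I would recall that $\Psi$ is realized by conjugation by a fixed matrix of the type that swaps the two odd roots (the Dynkin automorphism of $\sl(1|2)\simeq\osp(2|2)$), exchanging $e_1^{\pm}\leftrightarrow e_2^{\pm}$ up to signs and flipping the central Cartan generator $h_1+h_2\mapsto -(h_1+h_2)$, while fixing the $\sl(2)$-triple $E,F,H$. Under this description, the bosonic coordinates $x_1,x_2,y$ are preserved, $z$ is negated (since $z$ multiplies $h_1+h_2$), and the odd coordinates get permuted with the appropriate signs $\xi_1^+\leftrightarrow \xi_2^+$, $\xi_1^-\leftrightarrow -\xi_2^-$. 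The job is then to verify that the conjugation formula reproduces exactly these signs.

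The verification is a direct $3\times 3$ supermatrix computation, entirely analogous to the bosonic cases in Lemma \ref{action}; the only care needed is with the sign convention \eqref{signs} for odd entries under conjugation, which also explains why $\xi_2^-\mapsto -\xi_1^-$ rather than $+\xi_1^-$. Since $\Psi$ is even as an operator on the superspace, no overall sign ambiguities enter beyond those arising from the permutation of odd generators.

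The main (and only) obstacle is bookkeeping: one must ensure the sign conventions in defining the matrix form of $M$ from its $\sl(1|2)$-coordinates match the sign conventions for the adjoint action of $\Psi$ on the odd generators $e_i^\pm$ specified in the appendix, so that the final answer matches the stated formula verbatim. Once the matrix form of $\Psi$ from Proposition \ref{involution} is in hand, the proof reduces to a one-line matrix conjugation.
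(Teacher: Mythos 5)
The paper states this lemma without proof; the intended argument is precisely the one you sketch in your middle paragraph: expand $M=x_1E-x_2F-yH+\sum_i(\xi_i^+e_i^++\xi_i^-e_i^-)+z(h_1+h_2)$ and apply the generator-level definition of $\Psi$ from Proposition \ref{involution}, so that $E,F,H$ are fixed, $h_1+h_2\mapsto-(h_1+h_2)$ negates $z$, and $e_1^+\leftrightarrow e_2^+$, $e_1^-\mapsto-e_2^-$, $e_2^-\mapsto-e_1^-$ produce exactly the stated permutation and signs of the $\xi$'s. That part of your write-up is correct and complete.

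However, the framing in your first and last paragraphs --- ``write down the explicit matrix realization of $\Psi$ \ldots then compute the conjugation $\Psi M\Psi^{-1}$'' --- cannot be carried out as stated. Proposition \ref{involution} does not supply a $(2|1)\times(2|1)$ supermatrix for $\Psi$, and no such matrix exists: $\Psi$ sends $h_1=\mathrm{diag}(1,1,0)$ to $-h_2=\mathrm{diag}(0,-1,-1)$, which have different spectra, so $\Psi$ is an \emph{outer} automorphism in this realization and is not conjugation by any element of $GL(1|2)$. This is exactly why the paper defines $\widetilde{SL}(1|2)$ as the semidirect product $\Psi\ltimes SL(1|2)_0$ rather than locating $\Psi$ inside $SL(1|2)$ (a matrix realization of $\Psi$ exists only in the $(2|2)\times(2|2)$ picture mentioned in the Remark of Appendix \ref{sec:SL12}). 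So drop the matrix-conjugation step entirely: the proof is the basis substitution you already wrote, not an analogue of the computation in Lemma \ref{action}.
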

The lower Borel elements and their inverses are
\Eqn{
B=\veca{a&0&0\\\a&f&0\\c&\b&d},\tab B\inv = \veca{a\inv&0&0\\-\frac{\a}{af}&f\inv&0\\-\frac{c}{f}+\frac{\a\b}{f^2}&-\frac{\b}{fd}&d\inv},
}
where by definition of the superdeterminant we have $f=ad$. Finally we also embed the $SL(2)$ subgroup as
\Eqn{
g=\veca{a&0&b\\0&1&0\\c&0&d}\in SL(2)\inj SL(1|2),
}
where $ad-bc=1$.
\section{Orbit of $\SL(1|2)$ in the light cone}\label{sec:orbits}
In this section we would like to describe a particular orbit of $\SL(1|2)$, namely the orbit $\cL_0$ of $$e_0:=E=(1,0,0,0|0,0,0,0)\in\cL$$ under the adjoint action, which allows us to parametrize all representations of $\pi_1(F)$. Following \cite{PZ}, we shall call $\cL_0$ the \emph{special light cone}.
\begin{Rem}
In the $\cN=1$ case, the orbit $\mathcal{L}_0$ of the highest weight vector in the adjoint representation equivariantly projects onto the $\cN=1$ extension of the boundary of the upper half-plane, so that the $OSp(1|2)$ action is realized via fractional-linear superconformal transformations on $\mathbb{R}^{1|1}$. This gives $\mathcal{L}_0$ a meaning of the $\cN=1$ version of the original light cone.
 
In the $\cN=2$ case one can perform the same calculation and show that the orbit $\mathcal{L}_0$ has the same meaning, namely there is an equivariant projection onto the $\cN=2$ extension of the boundary of $\cN=2$ upper half-plane, i.e. $\mathbb{R}^{1|2}$. We leave the details for the interested reader.

\end{Rem}
\begin{Prop}\label{orbit_relation}
The elements $(x_1,x_2,y,z|\xi_1^+,\xi_2^+,\xi_1^-,\xi_2^-)$ of the light cone $\cL$ such that $x_1,x_2$ have non-negative bodies and satisfying
\Eq{\label{*}
\case{
\xi_1^-=-\frac{y}{x_1} \xi_2^+,\tab\xi_2^-=\frac{y}{x_1}\xi_1^+,\tab z=\frac{\xi_1^+\xi_2^+}{2x_1} & \mbox{if $x_1\neq 0$},\\
\xi_2^+=-\frac{y}{x_2}\xi_1^-,\tab\xi_1^+=\frac{y}{x_2}\xi_2^-,\tab z=\frac{\xi_1^-\xi_2^-}{2x_2} & \mbox{if $x_2\neq 0$},
}
}
constitute the orbit $\cL_0$ of $e_0=(1,0,0,0|0,0,0,0)$.
\end{Prop}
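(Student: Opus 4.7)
The plan is to prove both inclusions $\mathrm{Orbit}(e_0) \subseteq \cL_0$ and $\cL_0 \subseteq \mathrm{Orbit}(e_0)$.

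For the first inclusion, I will note that $e_0 = (1, 0, 0, 0|0, 0, 0, 0)$ satisfies \eqref{*} trivially via the first case (with $x_1 = 1$ and all other listed coordinates zero). I will then verify that the conjunction of \eqref{*} with non-negativity of the bodies of $x_1, x_2$ is invariant under every generator in Definition \ref{notation} and under $\Psi$. For the bosonic generators $J, W_b, D_{a,c}, \Psi$, the induced action on the bosonic subspace spanned by $(x_1, x_2, y)$ is the standard adjoint $SL(2, \R)$-representation, whose identification with $SO^+(2, 1)$ preserves each connected component of the null cone; invariance of \eqref{*} on the fermionic and $z$-coordinates is a direct check from the formulas in Lemma \ref{action}. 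For the fermionic generators $U_\alpha, V_\beta$, one substitutes the transformed coordinates into \eqref{*} and simplifies using $\alpha^2 = \beta^2 = 0$ together with the vanishing of squares of odd variables.

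For the second inclusion, given $p = (x_1, x_2, y, z|\xi_1^+, \xi_2^+, \xi_1^-, \xi_2^-) \in \cL_0$ I will exhibit $g \in \SL(1|2)$ with $g \cdot e_0 = p$. Assume $x_1 \neq 0$; the case $x_2 \neq 0$ reduces to this by pre-composing with $J$. Since the bosonic subgroup generated by $J, W_b, D_{a, c}$ acts transitively on the positive component of the null cone in the bosonic subspace, there is a bosonic product $g_0$ with $g_0 \cdot e_0 = (X_1, X_2, Y, 0|0, 0, 0, 0)$ for any prescribed positive-body bosonic target. Applying first $U_\alpha$ and then $V_\beta$ to this bosonic point yields, by Lemma \ref{action},
\[
(X_1 - \beta Y \alpha,\ X_2,\ Y - \beta X_2 \alpha/2,\ -\beta X_2 \alpha /2\,|\,Y \alpha,\ Y \beta,\ -X_2 \beta,\ X_2 \alpha),
\]
and a short computation using $\alpha^2 = \beta^2 = 0$ and $X_1 X_2 = Y^2$ verifies that this element already satisfies \eqref{*}. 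Since $\xi_1^-, \xi_2^-, z$ of $p$ are determined by $x_1, x_2, y, \xi_1^+, \xi_2^+$ via \eqref{*}, matching $p$ amounts to choosing $(\alpha, \beta)$ with $Y\alpha = \xi_1^+$ and $Y\beta = \xi_2^+$, while selecting $g_0$ to absorb the small nilpotent shifts in the bosonic coordinates.

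The main obstacle will be the book-keeping of fermion orderings and the iterative bosonic adjustment compensating for the $O(\xi^+\xi^+)$ shifts produced by the odd action; this is tractable because such shifts are nilpotent and their squares vanish, so any iterative correction terminates after one step. A separate subtlety arises in the degenerate case where $y$ has zero body (hence $x_2$ does too), where the map $(\alpha, \beta) \mapsto (Y\alpha, Y\beta)$ is no longer surjective onto arbitrary odd targets; this case is handled by first using the bosonic action to move $Y$ away from zero, inverting there, and then transporting back.
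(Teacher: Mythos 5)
Your proposal is correct and shares the paper's overall architecture — one inclusion by checking that the relations \eqref{*} are preserved by a generating set of $\widetilde{SL}(1|2)$ (including the odd flows $U_\alpha,V_\beta$ and the involutions $J,\Psi$), the other by an explicit transitivity construction — but the transitivity step is carried out by a genuinely different mechanism. The paper first normalizes the point with $SL(2)$, $D_{a,c}$ and $J$ to the form $x_1=x_2=y=t$, where everything is invertible, and then solves $B\cdot e_0=M$ in closed form for a lower Borel element $B$; this makes all odd coordinates appear linearly in the Borel parameters $\a,\b$ and there is no degenerate case to treat. You instead reach a general bosonic null vector by bosonic transitivity and then switch on the fermions with $U_\alpha V_\beta$, which forces you to (i) invert $Y$ to solve $Y\alpha=\xi_1^+$, $Y\beta=\xi_2^+$, hence to handle separately the case where $y$ has vanishing body (your fix — shift $y$ by a multiple of the invertible $x_1$ via a lower-triangular $SL(2)$ element, solve there, and transport back — is sound, since \eqref{*} is preserved by that move), and (ii) perform a one-step nilpotent correction of the bosonic target to absorb the $\xi_1^+\xi_2^+$ shifts, checking along the way that the corrected $(X_1,X_2,Y)$ still satisfies $X_1X_2=Y^2$. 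Both routes work; the paper's normalization buys a cleaner closed-form solution at the cost of an extra reduction step, while yours is more direct but requires the extra bookkeeping you flag. One small point to make explicit in a write-up: your generator list $J, W_b, D_{a,c}, U_\alpha, V_\beta, \Psi$ does generate $\widetilde{SL}(1|2)$ (conjugation by $J$ produces the lower unipotent and lower odd one-parameter subgroups), which is what licenses the invariance argument for the first inclusion.
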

\begin{proof}
Note that if an element $M\in\cL$ satisfies \eqref{*}, then for $x_1\neq 0$ we have
\Eqn{
0&=x_1x_2-y^2+\xi_1^+\xi_1^--\xi_2^+\xi_2^-+z^2\\
&=x_1x_2-y^2-\frac{y}{x_1}\xi_1^+\xi_2^+-\frac{y}{x_1}\xi_2^+\xi_1^++\frac{(\xi_1^+\xi_2^+)^2}{4x_1^2}\\
&=x_1x_2-y^2,
}
and similarly for the case of $x_2\neq 0$. Hence we have $y^2=x_1x_2$ for these elements.

Now consider $M=(x_1,x_2,y,z|\xi_1^+,\xi_2^+,\xi_1^-,\xi_2^-)$ that satisfies \eqref{*}. Acting on it by an element $g\in SL(2)$, we can assume that $x_1\neq 0, x_2\neq 0$. Then acting on it by the diagonal matrix $D_{a,c}$ with $a=\sqrt{\frac{x_2}{x_1}},c=1$ we can assume $x_1=x_2=t$. Since $y^2=x_1x_2=t^2$ we have $y=\pm t$. By the action of $J$ we can further assume $y=t$.

Now consider the action of the lower Borel acting on $e_0$:
\Eq{
B\cdot e_0 &=\veca{a&0&0\\\a&f&0\\c&\b&d}\veca{0&0&1\\0&0&0\\0&0&0}\veca{a\inv&0&0\\-\frac{\a}{af}&f\inv&0\\-\frac{c}{f}+\frac{\a\b}{f^2}&-\frac{\b}{fd}&d\inv}\nonumber\\
&=\veca{-\frac{c}{d}+\frac{\a\b}{fd}&-\frac{a\b}{fd}&\frac{a}{d}\\-\frac{\a c}{f}&\frac{\a\b}{fd}&\frac{\a}{d}\\-\frac{c^2}{f}+\frac{c\a\b}{f^2}&-\frac{c\b}{fd}&\frac{c}{d}}.\label{lowerB}
}
Hence solving for $B\cdot e_0=M$ for $y=t$, one of the solutions is given by
\Eqn{
a=t\tab d=1,\tab f=t,\tab c=\frac{\xi_1^+\xi_2^+}{2t}+ t,\tab \a=\xi_2^+,\tab \b=-\xi_1^+.
}
Hence an element satisfying \eqref{*} lies in the orbit of $e_0$.

Next, we note that for any connected Lie (super)group, a neighborhood of the identity generates the whole group, therefore, to show that the orbit of $e_0$ satisfies \eqref{*} it suffices to show that the relations \eqref{*} are preserved under the action of the infinitesimal group transformations and the involution $\Psi$ (since $\Psi$ gives a second connected component of $\widetilde{SL}(1|2)$). From \eqref{lowerB} we see easily that the relations \eqref{*} are preserved under the lower Borel action. Also by Lemma \ref{action} the relations are clearly preserved by the involutions $J$ and $\Psi$. Hence it suffices to consider the action of $U_\a,V_\b$. By the action of $J$ which interchanges $x_1$ and $x_2$, it suffices to consider the case for $x_2\neq 0$.

By Lemma \ref{action} the action of $U_\a$ is given by
\Eqn{
&U_\a\cdot (x_1,x_2,y,z|\xi_1^+,\xi_2^+,\xi_1^-,\xi_2^-)\\
&=(x_1-\a\xi_2^+,x_2,y+\frac{\a\xi_1^-}{2},z-\frac{\a\xi_1^-}{2}|\xi_1^++(y+z)\a,\xi_2^+,\xi_1^-,\xi_2^-+x_2\a)\\
&=:(x_1',x_2',y',z'|{\xi_1^+}',{\xi_2^+}',{\xi_1^-}',{\xi_2^-}').
}
Then 
\Eqn{
-\frac{y'}{x_2'}{\xi_1^-}'&=(-\frac{y}{x_2}-\frac{\a\xi_1^-}{2x_2})\xi_1^-=-\frac{y}{x_2}\xi_1^-=\xi_2^+={\xi_2^+}',\\
\frac{y'}{x_2'}{\xi_2^-}'&=(\frac{y}{x_2}+\frac{\a\xi_1^-}{2x_2})(\xi_2^-+x_2\a)=\frac{y}{x_2}\xi_2^-+y\a+\frac{\a\xi_1^-\xi_2^-}{2x_2}=\xi_1^++y\a+z\a={\xi_1^+}',\\
\frac{{\xi_1^-}'{\xi_2^-}'}{2x_2'}&=\frac{\xi_1^-(\xi_2^-+x_2\a)}{2x_2}=\frac{\xi_1^-\xi_2^-}{2x_2}-\frac{\a\xi_1^-}{2}=z'.
}
Similarly for the action of $V_\b$ we have
\Eqn{
&V_\b\cdot (x_1,x_2,y,z|\xi_1^+,\xi_2^+,\xi_1^-,\xi_2^-)\\
&=(x_1-\b\xi_1^+,x_2,y-\frac{\b\xi_2^-}{2},z-\frac{\b\xi_2^-}{2}|\xi_1^+,\xi_2^++(y-z)\b,\xi_1^--x_2\b,\xi_2^-)\\
&=:(x_1'',x_2'',y'',z''|{\xi_1^+}'',{\xi_2^+}'',{\xi_1^-}'',{\xi_2^-}'').
}
Then
\Eqn{
-\frac{y''}{x_2''}{\xi_1^-}''&=(\frac{y}{x_2}-\frac{\b\xi_2^-}{2x_2})(\xi_1^--x_2\b)=\frac{y\xi_1^-}{x_2}-y\b+\frac{\b\xi_1^-\xi_2^-}{2x_2}=-\xi_2^+-y\b+z\b=-{\xi_2^+}''\\
\frac{y''}{x_2''}{\xi_2^-}''&=(\frac{y}{x_2}-\frac{\b\xi_2^-}{2x_2})\xi_2^-=\frac{y}{x_2}\xi_2^-=\xi_1^+={\xi_1^+}''\\
\frac{{\xi_1^-}''{\xi_2^-}''}{2x_2''}&=\frac{(\xi_1^--x_2\b)\xi_2^-}{2x_2}=\frac{\xi_1^-\xi_2^-}{2x_2}-\frac{\b\xi_2^-}{2}=z''
}
and this completes the proof.
\end{proof}

\section{Orbits of pairs and triples in the light cone}\label{sec:orbitspair}
In this section we study the space of $\til{SL}(1|2)$-orbits of linearly independent ordered pairs and positively oriented triples of points in the special light cone $\cL_0$.

\begin{Prop}For two points on $\cL_0$ which are linearly independent (i.e., the pairing has non-zero body), the space of orbits is described by one parameter, which will be called the $\l$-length.
\end{Prop}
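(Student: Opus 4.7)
The plan is to exploit the transitivity of $\til{SL}(1|2)$ on $\cL_0$ established in Proposition \ref{orbit_relation} and to reduce the orbit problem for pairs to an orbit problem for the second point under the stabilizer of the reference vector $e_0=(1,0,0,0|0,0,0,0)$.

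Given a linearly independent ordered pair $(M_1,M_2)$, I would first use an element $g\in\til{SL}(1|2)$ to move $M_1$ to $e_0$, so it remains to analyze the orbit of $M_2':=g\cdot M_2\in\cL_0$ under the isotropy subgroup $H:=\mathrm{Stab}(e_0)$. A direct check using Lemma \ref{action} shows that $U_\a,V_\b,W_b,Z_a$ and the involution $\Psi$ all lie in $H$, whereas $D_{a,c}$ is in $H$ only when $a=c$. Linear independence forces the body of $\<e_0,M_2'\>=\frac{1}{2}x_2$ to be nonzero, so the $x_2\neq 0$ branch of Proposition \ref{orbit_relation} applies to $M_2'$, yielding $\xi_1^+=\frac{y}{x_2}\xi_2^-$, $\xi_2^+=-\frac{y}{x_2}\xi_1^-$, $z=\frac{\xi_1^-\xi_2^-}{2x_2}$, and (together with the isotropy condition and $(\xi_i^\pm)^2=0$) $y^2=x_1x_2$.

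I then apply in succession four stabilizer elements: $W_b$ with $b=-y/x_2$ to kill $y,x_1,\xi_1^+,\xi_2^+$; $V_\b$ with $\b=\xi_1^-/x_2$ to kill $\xi_1^-$ together with $z$; $U_\a$ with $\a=-\xi_2^-/x_2$ to kill $\xi_2^-$; and finally one more $W$ to clear the even residual $y$ generated along the way. At each step the $\cL_0$ constraints and the nilpotency identities $(\xi_i^\pm)^2=0$, $z^2=0$ guarantee that the corrections introduced by the preceding transformation are annihilated, so the procedure terminates with $M_2'$ in the canonical form $(0,x_2,0,0|0,0,0,0)$. Since $\<\cdot,\cdot\>$ is $\til{SL}(1|2)$-invariant and $\<e_0,(0,x_2,0,0|\zero)\>=\frac{1}{2}x_2=\<M_1,M_2\>$, any two linearly independent pairs lie in the same orbit if and only if they share this pairing, whose square root is the promised $\lambda$-length.

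The main obstacle will be the Grassmann bookkeeping: one must verify at each intermediate stage that the resulting vector still obeys the $x_2\neq 0$ branch of \eqref{*} and that the odd-odd products introduced at a given step are wiped out by the nilpotency relations at the next. Once this is organized, the classification of orbits by a single even invariant is immediate.
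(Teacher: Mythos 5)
Your proof is correct, and it is essentially the mirror image of the paper's argument. The paper normalizes $v_1=-F$ (i.e.\ $x_2=1$, all else zero) and then uses the lower Borel subgroup — which acts on the line through $F$ by scalars because $F$ is a lowest weight vector of the adjoint action — to carry $v_2$ exactly to $E$, so that $v_1$ picks up the scalar $\l^2=\<v_1,v_2\>$; that one scalar is the invariant. You instead fix $M_1=e_0=E$ exactly and normalize $M_2$ inside $\mathrm{Stab}(e_0)$ using the explicit elements $W_b,V_\b,U_\a,Z_a,\Psi$ of Lemma \ref{action}, arriving at $(0,x_2,0,0|\zero)$ with $x_2=2\<M_1,M_2\>$. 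The two normal forms are equivalent, and both identify the invariant pairing as the sole orbit parameter. The paper's route buys brevity: the lowest-weight observation makes the Grassmann bookkeeping unnecessary. Your route buys explicitness: it exhibits the stabilizer of $e_0$ and verifies by hand, via the relations \eqref{*} and $y^2=x_1x_2$, that the reduction terminates — your steps do check out ($W_{-y/x_2}$ kills $x_1,y,\xi_1^+,\xi_2^+$; the residual soul in the $y$-slot left by $V_\b$ and $U_\a$ squares to zero, so the final $W$ clears it), which is precisely the detail the paper delegates to ``as in the previous section.''
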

\begin{proof} Let $v_1, v_2$ be two vectors on $\cL_0$. By means of the action of $SL(1|2)$, we can bring $v_1=-F$, i.e., $x_2=1$ and all other components zero. Since we assume $\<v_1,v_2\>\neq 0$, the $x_1$-component of $v_2$ is non-zero. Therefore as in the previous section one can use an element $B$ of the lower Borel subgroup of $SL(1|2)$ in order to reduce it to the vector $v_2=E$, i.e., $x_1=1$ and all other components zero. At the same time, $B\cdot v_1=\l^2 v_1$ for some scalar $\l>0$ since $F$ is a lowest weight vector of the adjoint action. This scalar multiple is exactly the pairing $\<v_1,v_2\>$.
\end{proof}
\begin{Def} We call an ordered triple of points $(A,B,C):=\D ABC$ on $\cL_0$ \emph{positively ordered} if the triple of points are linearly independent, and the bodies of the underlying bosonic triples $(x_1,x_2,y)$ constitute a positively oriented basis of $\R^{2,1}$.
\end{Def}
\begin{Def} A positively ordered triple $\D ABC\subset \cL_0$ is said to be in \emph{standard position} with odd parameters $\h:=(\h_1,\h_2)$ if the points comprising the triple are of the form
\Eq{\label{standard}
A=&r(0,1,0,0|0,0,0,0),\\
B=&t(1,1,1,\frac{\h_1\h_2}{2}|\h_1,\h_2,-\h_2,\h_1),\\
C=&s(1,0,0,0|0,0,0,0),
}
for some positive numbers $r,s,t\in\R_{>0}$ and a pair of fermions $\h_1,\h_2$. We denote this by $\D ABC\in \cS^\h$.
\end{Def}
\begin{Prop}\label{stab}
$\D ABC\in \cS^\h$ and $\D A'B'C'\in \cS^{\h'}$ are in the same $SL(1|2)$-orbit if and only if $(\h_1',\h_2')=(a\h_1,a\inv \h_2)$ for some $a\neq 0$. In particular the stabilizers of $\D ABC\in \cS^\h$ are of the form $Z_a\in \cZ$ where $a\h_i=\h_i$ for $i=1,2$.
\end{Prop}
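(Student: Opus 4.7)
The plan is to rigidify using pair invariants, then study the residual gauge on $B$. The pairings $\<A,B\>,\ \<B,C\>,\ \<A,C\>$, computed directly from \eqref{standard}, equal $\frac{rt}{2},\ \frac{ts}{2},\ \frac{rs}{2}$ respectively, and each is an $SL(1|2)$-invariant. Hence if $\D ABC\in\cS^\h$ and $\D A'B'C'\in\cS^{\h'}$ lie in the same orbit, the scalars $r,s,t$ of the two standard forms must coincide, so $A=A'$ and $C=C'$ as vectors in $\cL_0$, and any group element $g$ realizing the orbit equivalence must fix both $A$ and $C$ pointwise.

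The next step is to identify the common pointwise stabilizer of $(A,C)=(-rF,sE)$. Any such element centralizes $E$ and $F$ under the adjoint action, hence also $H=[E,F]$, so it centralizes the entire even $\sl_2\subset\sl(1|2)$. A short weight count then shows that this centralizer is one-dimensional, generated by $h_1+h_2$, which exponentiates to the subgroup $\cZ=\{Z_a:a\neq 0\}$. Alternatively, I would run through the generators listed in Definition \ref{notation} and use Lemma \ref{action} to check directly: $D_{a,c}$ fixes both $A$ and $C$ only when $a=c$, yielding $Z_a$; while $J,\,U_\a,\,V_\b,\,W_b$ each fail to fix $A$ or $C$ unless their parameter is trivial.

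With the stabilizer of the pair identified, Lemma \ref{action} immediately gives both directions. The element $Z_a$ leaves the bosonic coordinates of $B$ unchanged and sends $(\xi_1^+,\xi_2^+,\xi_1^-,\xi_2^-)$ to $(a\inv\xi_1^+,a\xi_2^+,a\xi_1^-,a\inv\xi_2^-)$, so $Z_a\cdot B$ is once more in the form \eqref{standard} with odd parameters $(a\inv\h_1,a\h_2)$; the relabeling $a\leftrightarrow a\inv$ then recovers the stated equivalence $(\h_1',\h_2')=(a\h_1,a\inv\h_2)$. The stabilizer of $\D ABC$ within $\cZ$ consists of those $Z_a$ satisfying $a\inv\h_1=\h_1$ and $a\h_2=\h_2$, which for $a\neq 0$ collapses to the single condition $a\h_i=\h_i$ for $i=1,2$. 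The only step requiring genuine care is the identification of the stabilizer of the pair; the Lie-algebraic centralizer argument handles it cleanly, and the incidental fact that $\Psi$ also fixes $A$ and $C$ (since $z$ and every fermion vanishes on each) contributes only the extra symmetry $(\h_1,\h_2)\leftrightarrow(\h_2,\h_1)$, which is the separate involution already recorded in the Main Theorem.
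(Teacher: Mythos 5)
Your proposal is correct and follows the same route as the paper: identify the common stabilizer of $A$ and $C$ as the subgroup $\cZ$, then read off the effect of $Z_a$ on the odd parameters of $B$ from Lemma \ref{action}. Your version merely fills in the details the paper leaves as "easy to check" (the invariance of the pairings forcing $r,s,t$ to agree, and the centralizer computation pinning down $\cZ$), so no further commentary is needed.
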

\begin{proof} It is easy to check that the stabilizers for both $A$ and $C$ are the $\cZ$-subgroup elements. Hence the result follows from the action of $\cZ$ given in Lemma \ref{action}.
\end{proof}

\begin{Prop}\label{orbit} The space of $\SL(1|2)$-orbits of positively ordered triples of points on $\cL_0$ is parametrized by 3 positive numbers $r,s,t\in\R_{>0}$ and an unordered pair of fermions $\{\h_1,\h_2\}$ modulo the transformation $\{\h_1,\h_2\}\sim \{a\h_1,a\inv \h_2\}$ where $a$ is invertible. We call the equivalence class $[\h]$ of $\{\h_1,\h_2\}$ the odd invariant of the ordered triple.
\end{Prop}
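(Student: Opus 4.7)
The plan is to show that every positively ordered triple on $\cL_0$ lies in the $\SL(1|2)$-orbit of some triple in standard position $\cS^\h$, and then to read off the invariants from Proposition \ref{stab} together with the extra component generated by $\Psi$. Given $\D ABC$, I first apply the pair reduction used in the proof of the preceding proposition to transport $(A,C)$ into the form $(-rF,sE)$ for some $r,s>0$. Since pair-orbits are one-dimensional, only $rs/2=\<A,C\>$ is an invariant at this stage; the remaining one-parameter ambiguity in $(r,s)$ corresponds to the Cartan subgroup $\{D_{a,c}\}$, which rescales the rays through $-F$ and $E$ by reciprocal factors.

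Next, write $B=(x_1,x_2,y,z|\xi_1^+,\xi_2^+,\xi_1^-,\xi_2^-)$. The positive-ordering hypothesis computes the signed bosonic volume of $A,B,C$ as $rsy$, which must be positive, so $y>0$; combined with the light-cone identity $y^2=x_1x_2$ from Proposition \ref{orbit_relation}, this forces $x_1,x_2,y>0$. I then choose $D_{a,c}$ with $ac\inv=y/x_1$ and $a\inv c=y/x_2$ to normalize the bosonic part of $B$ to $(t,t,t,\cdot)$ with $t:=y$; the two constraints on $(a,c)$ are jointly solvable in positive reals precisely because their product $y^2/(x_1x_2)$ equals $1$. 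Relations \eqref{*} in Proposition \ref{orbit_relation} then express $z,\xi_1^-,\xi_2^-$ in terms of $\xi_1^+,\xi_2^+$, yielding exactly the standard vector of \eqref{standard} with $\h_i:=\xi_i^+/t$. Hence $\D ABC\in \cS^\h$, and the $\l$-lengths $\<A,B\>=rt/2$, $\<B,C\>=ts/2$, $\<A,C\>=rs/2$ recover the three positive scalars $r,s,t$ uniquely from the orbit.

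The remaining freedom on the odd data has two sources. First, Proposition \ref{stab} identifies $\cS^\h$ and $\cS^{\h'}$ with common $r,s,t$ precisely when $(\h_1',\h_2')=(a\h_1,a\inv\h_2)$ for some invertible $a$, giving the equivalence class $[\h]$. Second, the involution $\Psi$ in the extra connected component of $\SL(1|2)$ preserves the standard form of $B$ while acting as $(\h_1,\h_2)\corr(\h_2,\h_1)$: using Lemma \ref{action} and the definition of standard position, a direct computation shows that $\Psi\cdot B$ equals the standard vector with the roles of $\h_1$ and $\h_2$ exchanged. This upgrades the ordered pair to the unordered $\{\h_1,\h_2\}$. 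The main obstacle is the Cartan normalization of $B$: one must verify that the required $D_{a,c}$ exists with positive $a,c$, and this rests on the interplay between the positive-orientation hypothesis and the light-cone identity $y^2=x_1x_2$ established in Proposition \ref{orbit_relation}.
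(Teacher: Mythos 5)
Your proof is correct and follows essentially the same reduction-to-standard-position strategy as the paper, with only an organizational difference: you normalize the pair $(A,C)$ by invoking the preceding pair-orbit proposition and then adjust $B$ with the diagonal $D_{a,c}$, whereas the paper first fixes $C$, uses its unipotent stabilizer $U_\a,V_\b,W_b$ to put $A$ in position, and then applies $D_{a,a^{-1}}$; in both cases the invariants are read off from the residual stabilizer ($Z_a$-rescaling plus the swap by $\Psi$). You also make explicit two points the paper glosses over — that positive ordering forces $y>0$ and that $y^2=x_1x_2$ is exactly the solvability condition for the Cartan normalization of $B$ — which is a welcome clarification but not a different method.
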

\begin{proof}
Consider the positively ordered triple $\D ABC$. Let us put $C$ into the position $C=s(1,0,0,0|0,0,0,0)$ by an $SL(1|2)$ transform. Using Lemma \ref{action}, one sees that $U_\a,V_\b$ and the $SL(2)$ element $W_b$ lie in the stabilizer of $C$. Now given $A=(x_1,x_2,y,z|\xi_1^+,\xi_2^+,\xi_1^-,\xi_2^-)$, we can assume $x_2\neq 0$ (since otherwise the even part will be parallel to $C$), we can apply $V_\b U_\a$ to $A$ where $\a=-\frac{\xi_2^-}{x_2}$ and $\b=\frac{\xi_1^-}{x_2}$ and get $A'=(x_1',x_2',y',0|0,0,0,0)$. Finally applying $W_b$ where $b=-\frac{y'}{x_2'}$ we bring $A$ into the form $r(0,1,0,0|0,0,0,0)$.

Now $B$ will be in the general form $(x_1,x_2,y,z|\xi_1^+,\xi_2^+,\xi_1^-,\xi_2^-)$ with $x_1\neq 0$. Applying the diagonal transformation $D_{a,a\inv}$ with $a=\sqrt[4]{\frac{x_2}{x_1}}$ we can bring $B$ to the form $t(1,1,y,z|\h_1,\h_2,\h_1',\h_2')$. Since $y^2=x_1x_2$ on the orbit, we have $y=1$. From the relations \eqref{*} we see that $z=\frac{\h_1\h_2}{2},\h_1'=-\h_2,\h_2'=\h_1$. Hence the ordered triple $\D ABC\in \cS^\h$ is brought into standard position.

By Lemma \ref{action}, $Z_a$ acts on the triple by mapping the odd parameters $(\h_1,\h_2)\mapsto (a\inv\h_1, a\h_2)$, hence the parameters are determined up to rescaling. Finally the involution acts by interchanging $\h_1$ and $\h_2$, hence the orbit does not depend on the order of the fermions.
\end{proof}
\begin{Prop} The odd invariant $[\h]$ does not depend on the order of the vertices in standard position.
\end{Prop}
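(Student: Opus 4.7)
The plan is to verify invariance of $[\h]$ under a single cyclic permutation of the vertices, say $(A,B,C)\mapsto(B,C,A)$. Transpositions reverse orientation and are thus inadmissible for positively ordered triples, so the orientation-preserving relabelings of an ordered triple form the cyclic group $\Z_3$ generated by this single cycle; invariance under it therefore implies the full claim.

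Starting from $(A,B,C)\in\cS^{\h}$ in standard position with odd parameters $(\h_1,\h_2)$, the idea is to exhibit an explicit element of $\SL(1|2)$ that brings the cyclically permuted triple $(B,C,A)$ back to standard form. Following the algorithm in the proof of Proposition \ref{orbit}, the natural guess is the composition
\Eqn{
g \;=\; W_{\h_1\h_2/2}\, U_{\h_1}\, V_{\h_2}\, W_1\, J,
}
where $J$ interchanges the $E$- and $F$-lines (sending $A$ to the third position and $C$ to the first), the even element $W_1$ shears the image of $C$ onto the generic diagonal while moving the image of $B$ onto the $F$-line with a residual fermionic tail, the odd unipotents $V_{\h_2}$ and $U_{\h_1}$ successively kill the remaining $\xi_1^-$ and $\xi_2^-$ components on $g\cdot B$, and the final $W_{\h_1\h_2/2}$ absorbs the nilpotent correction to the $y$-coordinate generated along the way.

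A step-by-step application of Lemma \ref{action}, repeatedly using $\h_i^2=0$ and $(\h_1\h_2)^2=0$ to annihilate higher-order fermionic contributions, should yield
\Eqn{
g\cdot B &= t(0,1,0,0|0,0,0,0),\\
g\cdot C &= s\(1,1,1,\tfrac{\h_1\h_2}{2}|\h_1,\h_2,-\h_2,\h_1\),\\
g\cdot A &= r(1,0,0,0|0,0,0,0),
}
so that the permuted triple lies in $\cS^{\h}$ in standard position with cyclically rotated bosonic scales $(r',t',s')=(t,s,r)$ and with odd parameters literally equal to the original pair, $(\h_1',\h_2')=(\h_1,\h_2)$. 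In particular $[\h]$ is preserved.

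The main obstacle is the bookkeeping of the cross-terms $\h_i\h_j$ that appear in the bosonic $y$- and $z$-coordinates after the odd unipotents act: the nilpotent corrections produced by $V_{\h_2}$ and $U_{\h_1}$ must cancel precisely against those generated by the subsequent $W_{\h_1\h_2/2}$, which is what singles out this particular composition $g$. Once $g$ is identified correctly, the remaining verification is a routine if somewhat tedious application of Lemma \ref{action}.
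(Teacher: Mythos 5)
Your proposal is correct and follows essentially the same strategy as the paper: exhibit an explicit element of $\SL(1|2)$ realizing the cyclic rotation $\D ABC\mapsto \D BCA$ of the standard triple and check that it returns the triple to standard position with the same odd parameters (the restriction to cyclic permutations being forced by positive ordering, as in the paper). The only difference is cosmetic: the paper writes this element as $P'=J\circ\cU\inv$ with $\cU\inv$ a single explicit lower Borel matrix applied before $J$, whereas you factor the (conjugate) unipotent part into the elementary generators $W_{\h_1\h_2/2}U_{\h_1}V_{\h_2}W_1$ applied after $J$ --- I have checked via Lemma \ref{action} that your composition does indeed send $A,B,C$ to the claimed standard configuration with $(\h_1,\h_2)$ unchanged.
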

\begin{proof}
we shall show that the  cyclic permutation of the vertices is not relevant for $\h_1,\h_2$ modulo rescaling. To do that we shall construct the analogue of the ``prime transformation" from \cite{PZ} in this case by rotating the vertices. Recall from \eqref{lowerB}, that we have
\Eqn{
\cU (1,0,0,0|0,0,0,0)\cU\inv = (1,1,1,\frac{\h_1\h_2}{2}|\h_1,\h_2,-\h_2,\h_1),
}
where
\Eqn{
\cU=\veca{1&0&0\\\h_2&1&0\\1+\frac{\h_1\h_2}{2}&-\h_1&1},\tab \cU\inv = \veca{1&0&0\\-\h_2&1&0\\\frac{\h_1\h_2}{2}-1&\h_1&1}.
}

Let $\D ABC\in \cS^\h$. Applying $\cU\inv$ to $\D ABC$ gives
\Eqn{
A&\mapsto r(0,1,0,0|0,0,0,0),\\
B&\mapsto t(1,0,0,0|0,0,0,0),\\
C&\mapsto s(1,1,1,\frac{\h_1\h_2}{2}|-\h_1,-\h_2,-\h_2,\h_1).
}
Finally apply $J$ from Lemma \ref{action} which maps
\Eqn{
A&\mapsto r(1,0,0,0|0,0,0,0),\\
B&\mapsto t(0,1,0,0|0,0,0,0),\\
C&\mapsto s(1,1,1,\frac{\h_1\h_2}{2}|\h_1,\h_2,-\h_2,\h_1).
}
Hence the ordered triple $\D BCA$ is now in standard form with the same odd parameters.
\end{proof}

The prime transformation $P'\in SL(1|2)$ is therefore given by
\Eqn{
P'&=J\circ \cU\inv\\
&=\veca{0&0&1\\0&1&0\\-1&0&0}\veca{1&0&0\\-\h_2&1&0\\\frac{\h_1\h_2}{2}-1&\h_1&1}\\
&=\veca{\frac{\h_1\h_2}{2}-1&\h_1&1\\-\h_2&1&0\\-1&0&0}.
}
However, one checks that actually
\Eqn{
{P'}^3=\veca{1-\frac{\h_1\h_2}{2}&0&0\\0&1-\h_1\h_2&0\\0&0&1-\frac{\h_1\h_2}{2}}=Z_{1-\frac{\h_1\h_2}{2}}\in \cZ.
}

First let us introduce the following notations:

\begin{Def}For notational convenience, we write $\h:=(\h_1,\h_2)$, $a\h:=(a\h_1,a\inv \h_2)$ for $a\neq 0$, and $\h^{op}:=(\h_2,\h_1)$. We shall write $a^{\pm_i}:=a^{3-2i}=\case{a&i=1\\a\inv&i=2}$.  

We shall denote the constant
\Eq{c_\h:=\frac{1+\h_1\h_2}{6}.}
Then $c_\h$ is invariant under the rescaling $\h\to a\h$. 

Finally we write $\h\in[\D ABC]$ if $\h$ is a representative of the odd invariant $[\h]$ of the ordered triple $\D ABC$. 
\end{Def}

Now we can define the following more general prime transformations. Let $h_A,h_B,h_C$ be some positive numbers. 

\begin{Def}
The general prime transformation $P_{h_B,h_C}^{\h}$, which transforms $\D ABC \in \cS^{h_B \h}$ to $\D BCA\in \cS^{h_C\h}$ is given by
\Eqn{
P_{h_B,h_C}^{\h}:= Z_{c_\h}\circ Z_{h_C}\inv\circ P'\circ Z_{h_B}.
}
\end{Def}

\begin{Prop} We have the relations
$$P_{h_A,h_B}^\h\circ P_{h_C,h_A}^\h\circ P_{h_B,h_C}^\h = 1.$$
\end{Prop}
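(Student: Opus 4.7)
The plan is a direct calculation. Substituting the definition of each factor gives
\Eqn{
P_{h_A,h_B}^{\h}\circ P_{h_C,h_A}^{\h}\circ P_{h_B,h_C}^{\h} = (Z_{c_\h}Z_{h_B}\inv P'\, Z_{h_A})\cdot (Z_{c_\h}Z_{h_A}\inv P'\, Z_{h_C})\cdot (Z_{c_\h}Z_{h_C}\inv P'\, Z_{h_B}),
}
and the goal is to collapse this product to the identity.

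Two commutativity facts drive the reduction. First, every $Z_a=\mathrm{diag}(a,a^{2},a)$ is diagonal, so any pair of $Z$'s commute among themselves. Second, conjugation of $P'$ by $Z_a$ amounts to rescaling the odd parameters inside $P'$ by $(\h_1,\h_2)\mapsto (a\inv\h_1,a\h_2)$; this is immediate from the matrix form of $P'$ (or from Lemma \ref{action}). Specialized to $a=c_\h=1+\frac{\h_1\h_2}{6}$ this rescaling is trivial, because $\h_i^{2}=0$ forces $c_\h\,\h_2=\h_2+\frac{\h_1\h_2^{2}}{6}=\h_2$ and similarly $c_\h\inv\h_1=\h_1$. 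Hence $Z_{c_\h}$ commutes with $P'$ itself.

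Together these facts let all three copies of $Z_{c_\h}$ be pulled past every remaining factor to the far left. What survives in the middle then telescopes via $Z_{h_A}Z_{h_A}\inv=Z_{h_C}Z_{h_C}\inv=1$, yielding
\Eqn{
P_{h_A,h_B}^{\h}\circ P_{h_C,h_A}^{\h}\circ P_{h_B,h_C}^{\h}=Z_{c_\h}^{3}\, Z_{h_B}\inv (P')^{3}\, Z_{h_B}.
}

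Finally, invoke the identity $(P')^{3}=Z_{1-\h_1\h_2/2}$ already computed in the text. Being itself a $Z$, it commutes with $Z_{h_B}$, so the right-hand side collapses to $Z_{c_\h^{3}(1-\h_1\h_2/2)}$. The expansion $(1+x)^{3}=1+3x$, valid whenever $x^{2}=0$, applied to $x=\h_1\h_2/6$ gives $c_\h^{3}=1+\h_1\h_2/2$, whence $c_\h^{3}(1-\h_1\h_2/2)=1-(\h_1\h_2/2)^{2}=1$. The composition therefore equals $Z_{1}=\mathrm{id}$, as required. The only mildly non-routine step is recognizing that $Z_{c_\h}$ is \emph{central} against $P'$, i.e.\ that $c_\h$ is precisely the neutral scaling for the pair $(\h_1,\h_2)$; the rest is bookkeeping with diagonal matrices and the nilpotency $\h_i^{2}=0$.
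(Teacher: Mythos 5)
Your argument is correct and is precisely the direct calculation the paper leaves to the reader: the two facts you isolate --- that conjugation of $P'$ by $Z_a$ rescales $(\h_1,\h_2)\mapsto(a\inv\h_1,a\h_2)$ so that $Z_{c_\h}$ commutes with $P'$ (since $\h_i^2=0$), and that $c_\h^3=1+\h_1\h_2/2$ exactly cancels $(P')^3=Z_{1-\h_1\h_2/2}$ --- are the whole content of the proposition, and indeed explain the otherwise mysterious factor $1/6$ in $c_\h$. The telescoping of the $Z_{h_A},Z_{h_C}$ factors is handled correctly, so nothing is missing.
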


By direct calculation we obtain
\begin{Lem}\label{primeaction}
The action of the general prime transformation $P_{h_B,h_C}^\h$ on $\cL_0$ is given by
\Eqn{
P_{h_B,h_C}^\h \cdot (x_1,x_2,y,z|\xi_1^+,\xi_2^+,\xi_1^-,\xi_2^-)=({x_1}',{x_2}',{y}',{z}'|{\xi_1^+}',{\xi_2^+}',{\xi_1^-}',{\xi_2^-}'),
}
where
\Eqn{
{x_2}'&:=x_1,\\
{z}'&:=z+\frac{1}{2}(h_B\inv\h_2\xi_1^+-h_B \h_1\xi_2^++\h_1\h_2 x_1),\\
{y}'&:=-y+\frac{1}{2}(h_B\inv\h_2\xi_1^++h_B\h_1\xi_2^+)+x_1,\\
{\xi_1^-}'&:=c_\h h_C\inv(h_B\xi_2^+-\h_2x_1),\\
{\xi_2^-}'&:=c_\h\inv h_C(-h_B\inv\xi_1^++\h_1x_1),
}
and by \eqref{*} we have
\Eqn{
{x_1}'=\frac{{y'}^2}{{x_2}'},\tab {\xi_1^+}'=\frac{{y}'}{{x_2}'}{\xi_2^-}',\tab {\xi_2^+}'=-\frac{{y}'}{{x_2}'}{\xi_1^-}'.
}
\end{Lem}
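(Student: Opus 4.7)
The plan is to compute the adjoint action directly by exploiting the factorisation
$P_{h_B,h_C}^{\h}=Z_{c_\h}\circ Z_{h_C}\inv\circ P'\circ Z_{h_B}$
given in the definition. The three $\cZ$-actions are already recorded in Lemma \ref{action}: each is a diagonal rescaling of the four odd components (and leaves the bodies of $x_1,x_2,y,z$ fixed), so the only genuine computation is the action of $P'=J\circ\cU\inv$. My strategy is therefore to first pull the vector through $Z_{h_B}$ (which replaces $\xi_1^+,\xi_2^+,\xi_1^-,\xi_2^-$ by $h_B\inv\xi_1^+,h_B\xi_2^+,h_B\xi_1^-,h_B\inv\xi_2^-$), then apply $P'$, and finally apply the diagonal rescaling $Z_{c_\h\cdot h_C\inv}$ by $Z_{a}\cdot Z_{b}=Z_{ab}$, which only reweights the odd outputs by the factors $c_\h h_C\inv$ and $c_\h\inv h_C$ appearing in the statement.

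For the main step, I would compute $P'\cdot M=P'M(P')\inv$ directly in the $3\times 3$ matrix realization. Using
\Eqn{
P'=\veca{\tfrac{\h_1\h_2}{2}-1&\h_1&1\\-\h_2&1&0\\-1&0&0},
}
one obtains $(P')\inv$ either by Cramer's rule or, more cleanly, from the already-verified identity ${P'}^3=Z_{1-\h_1\h_2/2}$, so that $(P')\inv=Z_{1-\h_1\h_2/2}\inv\,{P'}^2$. Substituting the general light-cone matrix
\Eqn{
M=\veca{z-y&\xi_1^+&x_1\\\xi_1^-&2z&\xi_2^+\\-x_2&\xi_2^-&z+y}
}
and multiplying out, with the sign convention from \eqref{signs} applied carefully every time a fermionic row or column is moved past another, gives a matrix whose nine entries can be read off to produce the tuple $(x_1',x_2',y',z'|{\xi_1^+}',{\xi_2^+}',{\xi_1^-}',{\xi_2^-}')$. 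The resulting even components $x_2'=x_1$, $y'=-y+\tfrac{1}{2}(h_B\inv\h_2\xi_1^++h_B\h_1\xi_2^+)+x_1$, and $z'=z+\tfrac{1}{2}(h_B\inv\h_2\xi_1^+-h_B\h_1\xi_2^++\h_1\h_2 x_1)$ should drop out of the $(1,1),(1,3),(2,2),(3,3)$ entries, while the odd outputs $\xi_1^{-'}, \xi_2^{-'}$ come from the $(3,2)$ and $(2,3)$ entries respectively.

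To cross-check and to avoid mistakes in the sign bookkeeping, I would verify the formulas in two ways. First, specialize to $(x_1,x_2,y,z|\xi^\pm_i)=(1,0,0,0|0,0,0,0)$ and confirm that the output is the standard vertex $B$ of Proposition \ref{orbit}, as predicted by the very construction of $P'$ as a cyclic rotation $\D ABC\mapsto \D BCA$; the general $h_B,h_C$-dressed version must send the normalized $C$-vertex to the normalized $B$-vertex of a triple in $\cS^{h_C\h}$. Second, the three remaining components ${x_1}',{\xi_1^+}',{\xi_2^+}'$ must automatically satisfy the constraints \eqref{*} of Proposition \ref{orbit_relation}, because $P_{h_B,h_C}^\h\in \widetilde{SL}(1|2)$ preserves $\cL_0$; this gives the stated closed forms $x_1'={y'}^2/x_2'$, $\xi_1^{+'}=(y'/x_2')\xi_2^{-'}$, $\xi_2^{+'}=-(y'/x_2')\xi_1^{-'}$ without extra work and at the same time serves as a non-trivial consistency check on the computation.

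The only real obstacle is the sign bookkeeping for fermions in the supermatrix product, in particular the fact that conjugation by a matrix with odd entries (like $\cU\inv$) mixes the parities of rows and columns, so each term must be weighted with the correct Koszul sign; once that is handled carefully in a single pass for $P'$, the full prime transformation is obtained by rescaling as above.
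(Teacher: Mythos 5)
Your proposal is correct and is essentially the paper's own argument: the paper offers no proof beyond the phrase ``by direct calculation,'' and the calculation it intends is exactly the one you describe — conjugate $M$ by $P'=J\circ\cU\inv$ using the sign convention \eqref{signs}, sandwich with the diagonal $\cZ$-actions from Lemma \ref{action} to account for $Z_{h_B}$ and $Z_{c_\h}Z_{h_C}\inv$, and invoke \eqref{*} for the three redundant components. The only nit is that $x_2'=x_1$ is read off from the $(3,1)$ entry (which carries $-x_2$), not from the entries you list, but this does not affect the argument.
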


For notational convenience, we shall denote edges as well as their $\l$-lengths by the same (Latin lowercase) letter as long as it does not lead to confusion.  
The real positive numbers $h$, corresponding to the given vertex will from now on be labeled by the edge opposite to that vertex. 
We shall sometimes denote
\Eq{\label{leftright}
P_{h_a,h_b}^{\h,+}:=P_{h_a,h_b}^\h,\tab P_{h_a, h_b}^{\h,-}:= (P_{h_b,h_a}^{\h})\inv.
}
By definition,
\Eqn{
P_{h_a,h_b}^{\h,+}&=Z_{c_\h}\circ Z_{h_b}\inv \circ P'\circ  Z_{h_a},\\
P_{h_a,h_b}^{\h,-}&=Z_{c_\h}\inv \circ Z_{h_b}\inv \circ {P'}\inv \circ Z_{h_a},
}
which denote respectively the clockwise and anti-clockwise prime transformations.
\section{Basic calculation}\label{sec:basic}
In this section we deal with orbits of 4-tuples $(A,B,C,D):=\dia ABCD$ in the special light cone $\cL_0$ such that $\D ABC$ and $\D CDA$ are positively ordered triples of points. The derivation of the coordinates of the ``standard position'' of such quadrilateral, which was called the \emph{basic calculation} \cite{P2} in the pure even case, constitutes the heart of the definition of the super-Teichm\"uller space in Section \ref{sec:teich}.

Recall the pairing \eqref{pairing} that is invariant under the action of $\SL(1|2)$:
\Eq{
\<M,M'\> = \frac{1}{2}(x_1x_2'+x_2x_1')-yy'+\frac{1}{2}(\xi_1^+{\xi_1^-}'-\xi_2^+{\xi_2^-}'-\xi_1^-{\xi_1^+}'+\xi_2^-{\xi_2^+}')+zz'.
}


\begin{figure}[h!]
\centering
\begin{tikzpicture}[ultra thick]
\draw (0,0)--(3,0)--(60:3)--cycle;
\draw (0,0)--(3,0)--(-60:3)--cycle;
\draw node[above] at (70:1.5){$a$};
\draw node[above] at (30:2.8){$b$};
\draw node[below] at (-30:2.8){$c$};
\draw node[below=-0.1] at (-70:1.5){$d$};
\draw node[above] at (1.5,0){$e$};
\draw node[left] at (0,0) {$A$};
\draw node[above] at (60:3) {$B$};
\draw node[right] at (3,0) {$C$};
\draw node[below] at (-60:3) {$D$};
\draw node at (1.5,1){$\h_1,\h_2$};
\draw node at (1.5,-1){$\s_1,\s_2$};
\end{tikzpicture}
\caption{Labelling with $A,B,C$ in standard position}
\label{basic}
\end{figure}
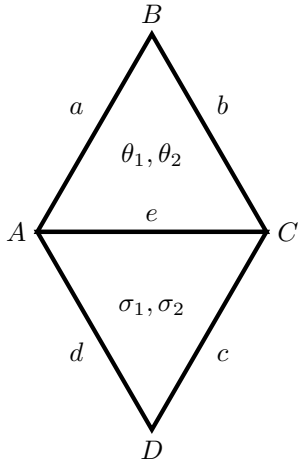


Let $\D ABC\in \cS^\h$ be in standard position
\Eqn{
A&=r(0,1,0,0|0,0,0,0),\\
B&=t(1,1,1,\frac{\h_1\h_2}{2}|\h_1,\h_2,-\h_2,\h_1),\\
C&=s(1,0,0,0|0,0,0,0).
}
Then (the squares of) $\l$-lengths of the edges by definition are given by
\Eq{
a^2=\<A,B\>=\frac{rt}{2},\tab b^2=\<B,C\>=\frac{st}{2},\tab e^2=\<A,C\>=\frac{rs}{2}.
}
Continuing to abuse notation, we shall index the edge with the same label as the $\l$-length.

Let $\D CDA$ be another positively ordered triple, then $D$ is of the form 
$$D=(x_1,x_2,-y,z|\xi_1^+,\xi_2^+,\xi_1^-,\xi_2^-)$$ with $y>0$. We have
\Eq{
c^2=\<C,D\>=\frac{sx_2}{2},\tab d^2=\<A,D\>=\frac{rx_1}{2}.
}
As in the bosonic case, we can solve for the even parameters from the $\l$-lengths themselves. 
\begin{Lem}\label{rst}Denote the \emph{cross-ratio} by 
\Eq{
\mathcal{X}=\frac{ac}{bd}.
}Then we have
\Eq{
r&=\sqrt{2}\frac{ae}{b},\tab s=\sqrt{2}\frac{be}{a},\tab t=\sqrt{2}\frac{ab}{e},}
\Eq{
x_1&=\sqrt{2}\frac{bd^2}{ae}=\sqrt{2}\frac{cd}{e}\mathcal{X}\inv,\tab x_2=\sqrt{2}\frac{ac^2}{be}=\sqrt{2}\frac{cd}{e}\mathcal{X}, \tab y=\sqrt{2}\frac{cd}{e}.
}
In particular we find  
\Eq{
\sqrt{\frac{x_2}{x_1}}=\frac{ac}{bd}=\mathcal{X}.
}
\end{Lem}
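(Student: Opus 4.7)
The plan is to read off the five square-$\lambda$-length equations listed immediately above the lemma, solve the $(r,s,t)$-subsystem first, then use the remaining two equations to read off $(x_1,x_2)$ directly, and finally pin down $y$ from the light-cone constraint. There is no conceptual obstacle here; the whole content is bookkeeping in the quadratic form \eqref{pairing}.

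Concretely, inserting the standard-position expressions for $A,B,C$ into the pairing \eqref{pairing} already produced
\Eqn{
a^2=\tfrac{rt}{2},\qquad b^2=\tfrac{st}{2},\qquad e^2=\tfrac{rs}{2},\qquad c^2=\tfrac{sx_2}{2},\qquad d^2=\tfrac{rx_1}{2}.
}
The first three are three equations in three unknowns, and all fermionic contributions drop out by construction because $A$ and $C$ carry no odd components. To solve, I would form the ratios (or equivalently the products): $\frac{a^2b^2}{e^2}=\frac{t^2}{2}$, $\frac{a^2e^2}{b^2}=\frac{r^2}{2}$, $\frac{b^2e^2}{a^2}=\frac{s^2}{2}$. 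Taking positive square roots (all of $r,s,t$ are positive) yields the stated values
\Eqn{
r=\sqrt{2}\,\frac{ae}{b},\qquad s=\sqrt{2}\,\frac{be}{a},\qquad t=\sqrt{2}\,\frac{ab}{e}.
}

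With $r,s$ in hand, the remaining two equations $c^2=sx_2/2$ and $d^2=rx_1/2$ solve immediately for $x_2$ and $x_1$, giving
\Eqn{
x_1=\frac{2d^2}{r}=\sqrt{2}\,\frac{bd^2}{ae},\qquad x_2=\frac{2c^2}{s}=\sqrt{2}\,\frac{ac^2}{be},
}
which one rewrites as $\sqrt{2}\,\frac{cd}{e}\chi^{\mp 1}$ by inserting one factor of $\chi=\frac{ac}{bd}$. The same substitution shows $\sqrt{x_2/x_1}=\frac{ac}{bd}=\chi$.

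For $y$, I would invoke the membership $D\in\cL_0$: by Proposition \ref{orbit_relation}, the special light cone forces $y^2=x_1x_2$ on its elements. Multiplying the formulas for $x_1$ and $x_2$ gives $x_1x_2=2c^2d^2/e^2$, and since the triple $\triangle CDA$ is positively oriented we are told $y>0$, hence $y=\sqrt{2}\,cd/e$, as claimed. The cross-ratio identity $\sqrt{x_2/x_1}=\chi$ then follows at sight. The only ``obstacle,'' such as it is, is keeping the signs of the square roots consistent; this is handled by the positivity assumptions $r,s,t>0$ built into the standard position and the explicit $y>0$ hypothesis on $D$.
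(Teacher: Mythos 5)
Your proof is correct and is exactly the computation the paper intends: the lemma is stated as an immediate consequence of the displayed relations $a^2=rt/2$, $b^2=st/2$, $e^2=rs/2$, $c^2=sx_2/2$, $d^2=rx_1/2$, and your solution of that system, together with $y^2=x_1x_2$ on $\cL_0$ and the positivity of $y$, matches the paper's (unwritten) argument step for step.
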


Next let us calculate the odd invariants $[\s]$ of the triple $\D CDA$. 
\begin{Prop}\label{sigma} $\D CDA$ can be transformed into standard position with
\Eqn{
C&\mapsto \hat{r}(0,1,0,0|0,0,0,0),\\
D&\mapsto \hat{t}(1,1,1,\frac{\s_1\s_2}{2}|\s_1,\s_2,-\s_2,\s_1),\\
A&\mapsto \hat{s}(1,0,0,0|0,0,0,0),
}
where 
\Eq{
\hat{t}&=\sqrt{x_1x_2},\tab \hat{r}=\sqrt[4]{\frac{x_1}{x_2}}s,\tab \hat{s}=\sqrt[4]{\frac{x_1}{x_2}}r,\\
\s_1&=\sqrt[4]{\frac{x_1}{x_2}}\frac{\xi_2^-}{\sqrt{x_1x_2}}=\frac{\xi_2^-}{\sqrt{yx_2}}=-\frac{\xi_1^+}{\sqrt{yx_1}},\label{fermpara}\\
\s_2&=-\sqrt[4]{\frac{x_1}{x_2}}\frac{\xi_1^-}{\sqrt{x_1x_2}}=-\frac{\xi_1^-}{\sqrt{yx_2}}=-\frac{\xi_2^+}{\sqrt{yx_1}}.\label{fermpara2}
}
\end{Prop}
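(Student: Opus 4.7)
The plan is to construct explicitly an element of $\SL(1|2)$ that takes the ordered triple $\D CDA$ into standard position, mirroring the normalization procedure in the proof of Proposition \ref{orbit}. Since the vertices $A$ and $C$ already sit at what would be the first- and third-vertex positions of $\D ABC$, and their roles are interchanged in $\D CDA$, it is natural to seek an element of the form $D_{a,a\inv}\circ J$ from Definition \ref{notation}.

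First I apply $J$. By Lemma \ref{action}, $J$ swaps $x_1\leftrightarrow x_2$ and negates $y$, so it maps $A=r(0,1,0,0|0,0,0,0)$ to $r(1,0,0,0|0,0,0,0)$, $C=s(1,0,0,0|0,0,0,0)$ to $s(0,1,0,0|0,0,0,0)$, and
\Eqn{J\cdot D=(x_2,x_1,y,z|\xi_2^-,-\xi_1^-,\xi_2^+,-\xi_1^+).}
Thus $J\cdot C$ now occupies the required first-vertex position and $J\cdot A$ the third-vertex position for $\D CDA$; the positive orientation of the original triple $\D CDA$ ensures that the new bosonic body $(x_2,x_1,y)$ has the orientation demanded by the standard form.

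Next I apply $D_{a,a\inv}$ with $a:=\sqrt[4]{x_1/x_2}$. By Lemma \ref{action} this equalizes the $x_1$- and $x_2$-components of $J\cdot D$, bringing them to the common value $a^2 x_2=a^{-2}x_1=\sqrt{x_1x_2}$; since $D\in\cL_0$ with positive $y$-body, also $y=\sqrt{x_1x_2}$, so $\hat{t}=\sqrt{x_1x_2}$. The same diagonal rescales $J\cdot A$ and $J\cdot C$ by $a^{\pm 2}$, producing the scale factors $\hat{s}$ and $\hat{r}$ asserted (which must also satisfy $\hat{r}\hat{s}=rs$ to be consistent with the shared $\l$-length $e^2=\<A,C\>$). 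For the fermion components, Lemma \ref{action} yields the new $\xi_1^+$-entry equal to $a\xi_2^-$ and the new $\xi_2^+$-entry equal to $-a\xi_1^-$; normalizing by $\hat{t}$ immediately produces the first expressions $\sigma_1=\sqrt[4]{x_1/x_2}\,\xi_2^-/\sqrt{x_1x_2}$ and $\sigma_2=-\sqrt[4]{x_1/x_2}\,\xi_1^-/\sqrt{x_1x_2}$ in the statement.

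Finally, the alternative expressions for $\sigma_1,\sigma_2$ in terms of $\xi_1^+,\xi_2^+$ follow by applying the constraint relations of Proposition \ref{orbit_relation} to $D$. Since $D$ has $y$-component $-y$ (rather than $+y$), these relations read $\xi_2^-=-(y/x_1)\xi_1^+$ and $\xi_1^-=(y/x_1)\xi_2^+$; substituting into the first expressions and using $y^2=x_1x_2$ to rewrite $\sqrt{yx_2}=x_1^{1/4}x_2^{3/4}$ and $\sqrt{yx_1}=x_1^{3/4}x_2^{1/4}$ verifies the remaining equalities. I anticipate no serious obstacle: the only care needed is in tracking the sign flips introduced by $J$ on the fermions and by the negative $y$-component when invoking the constraints, after which the entire argument reduces to direct matrix computation via Lemma \ref{action}.
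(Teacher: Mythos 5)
Your proposal is correct and is essentially the paper's own proof: the paper applies $D_{\sqrt\chi}$ with $\sqrt\chi=\sqrt[4]{x_2/x_1}$ and then $J$, while you apply $J$ and then $D_{\sqrt[4]{x_1/x_2}}$, and since $J D_a J\inv=D_{a\inv}$ these are the same element of $\SL(1|2)$; your fermion bookkeeping and the use of the constraints \eqref{*} with the negated $y$-component are exactly right. One small caveat: the diagonal step actually produces $\hat r=\sqrt{x_2/x_1}\,s$ and $\hat s=\sqrt{x_1/x_2}\,r$ (which do satisfy $\hat r\hat s=rs$), not the fourth-root expressions you claim to have recovered — the statement's formulas for $\hat r,\hat s$ are inconsistent with the construction (and with Corollary \ref{rstCor}) as printed, so you should derive them rather than assert agreement; this is harmless downstream since only $\hat t$ and $\s_1,\s_2$ are used later.
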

\begin{proof}
Acting on $\D CDA$ by the diagonal transformation $D_a$ with $a=\sqrt[4]{\frac{x_2}{x_1}}=\sqrt\mathcal{X}$ sends
\Eqn{C&\mapsto \hat{r}(1,0,0,0|0,0,0,0),\\
D&\mapsto  \hat{t}(1,1,-1,\frac{\s_1\s_2}{2}|-\s_1,-\s_2,-\s_2,\s_1),\\
A&\mapsto \hat{s}(0,1,0,0|0,0,0,0).
}
Applying $J$ then sends $\D CDA$ to standard position and the result follows.
\end{proof}
\begin{Cor}\label{rstCor} Using Lemma \ref{rst}, we can express all variables in terms of $\l$-lengths as follows
\Eq{
\hat{r}&=\sqrt{2}\frac{de}{c},\tab \hat{s}=\sqrt{2}\frac{ce}{d},\tab\hat{t}=\sqrt{2}\frac{cd}{e}=y,
}
\Eq{
\xi_1^-&=-\sqrt\mathcal{X}\sqrt{2}\frac{cd}{e}\s_2,\tab &&\xi_2^-=\sqrt\mathcal{X} \sqrt{2}\frac{cd}{e}\s_1,\\
\xi_1^+&=-\sqrt\mathcal{X}\inv\sqrt2\frac{cd}{e}\s_1,\tab &&\xi_2^+=-\sqrt\mathcal{X}\inv\sqrt{2}\frac{cd}{e}\s_2,\nonumber}
\Eq{
\hat{z}&=\frac{cd}{\sqrt{2}e}\s_1\s_2=z,
}
where we have used \eqref{*} from Proposition \ref{orbit_relation} for the expression of $\xi_1^+,\xi_2^+$ and $z$. 
\end{Cor}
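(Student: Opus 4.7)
The proof is essentially a direct substitution. My plan is to handle the three blocks of data in turn: first the even coefficients $\hat r, \hat s, \hat t$; then the odd entries $\xi_1^\pm, \xi_2^\pm$ of $D$; and finally the $z$-coordinate, which I will extract from the light-cone constraint.

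For the even coefficients I would start with $\hat t$: since Proposition~\ref{orbit_relation} forces $y^2 = x_1 x_2$ on $\cL_0$, we have $\hat t = \sqrt{x_1 x_2} = y$, and Lemma~\ref{rst} then yields $\hat t = \sqrt{2}\,cd/e$ directly. For $\hat r$ and $\hat s$, I would plug the Lemma's values $r = \sqrt{2}\,ae/b$, $s = \sqrt{2}\,be/a$ together with the identity $\sqrt{x_2/x_1} = \chi = ac/(bd)$ into the expressions of Proposition~\ref{sigma}; in each product two of the four outer $\lambda$-lengths $a,b,c,d$ cancel, leaving the claimed expressions in the three variables $c, d, e$ only.

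For the odd entries of $D$, I would invert the identities of Proposition~\ref{sigma} by first computing the auxiliary combinations $\sqrt{yx_1}$ and $\sqrt{yx_2}$. Using Lemma~\ref{rst} a short calculation gives $\sqrt{yx_2} = \sqrt{2}\,(cd/e)\sqrt{\chi}$ and $\sqrt{yx_1} = \sqrt{2}\,(cd/e)/\sqrt{\chi}$; then multiplying against $\sigma_1, \sigma_2$ in the formulas $\sigma_1 = \xi_2^-/\sqrt{yx_2} = -\xi_1^+/\sqrt{yx_1}$ and $\sigma_2 = -\xi_1^-/\sqrt{yx_2} = -\xi_2^+/\sqrt{yx_1}$ yields the stated forms of $\xi_1^-, \xi_2^-, \xi_1^+, \xi_2^+$ after being careful with the minus signs attached to $\sigma_1$ and $\sigma_2$.

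Finally for $z$ I would invoke Proposition~\ref{orbit_relation} in the $x_2 \neq 0$ branch of~\eqref{*}, which gives $z = \xi_1^-\xi_2^-/(2x_2)$. Multiplying the expressions just obtained produces $\xi_1^-\xi_2^- = 2\chi(cd/e)^2\,\sigma_1\sigma_2$, while $2x_2 = 2\sqrt{2}\,(cd/e)\chi$, so the $\chi$ cancels and leaves $z = cd\,\sigma_1\sigma_2/(\sqrt{2}\,e)$, which is exactly $\hat t\cdot \sigma_1\sigma_2/2 = \hat z$. The only real obstacle throughout is careful sign bookkeeping: $D$ carries a $-y$ in its $y$-slot (with $y>0$), and the anticommuting $\xi$'s must be handled with care when forming and rearranging products. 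Apart from this, every step is routine algebra.
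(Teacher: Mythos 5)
Your overall strategy --- direct substitution of Lemma \ref{rst} into Proposition \ref{sigma}, plus the light-cone relation \eqref{*} for $z$ --- is exactly what the Corollary intends (the paper supplies no separate proof), and your treatment of $\hat t$, of the four odd entries $\xi_i^\pm$, and of $z$ is correct, including the sign bookkeeping: $\sqrt{yx_2}=\sqrt2(cd/e)\sqrt\chi$, $\sqrt{yx_1}=\sqrt2(cd/e)\sqrt\chi\inv$, and $\xi_1^-\xi_2^-=2\chi(cd/e)^2\s_1\s_2$ all check out and give the stated $\xi_i^\pm$ and $\hat z=\hat t\,\s_1\s_2/2$.

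The step for $\hat r$ and $\hat s$, however, does not go through as you describe. If you literally substitute into the displayed formula $\hat r=\sqrt[4]{x_1/x_2}\,s=\chi^{-1/2}s$ of Proposition \ref{sigma}, you obtain $\hat r^2=\frac{bd}{ac}\cdot\frac{2b^2e^2}{a^2}=\frac{2b^3de^2}{a^3c}$, in which $a$ and $b$ do \emph{not} cancel; so the claimed ``two of the four outer $\l$-lengths cancel'' cannot be extracted from that formula. Indeed the displayed prefactors in Proposition \ref{sigma} are inconsistent with invariance of the pairing (they would give $\hat r\hat s=\chi\inv rs\neq 2e^2$); the proof of that proposition actually produces $\hat r=\sqrt{x_2/x_1}\,s=\chi s$ and $\hat s=\chi\inv r$, since the $x_1$-component of $C$ is scaled by $a^2=\sqrt{x_2/x_1}$ under $D_a$. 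The cleanest route, which sidesteps the issue entirely, is to use invariance of the pairing under $\SL(1|2)$: $\hat r\hat t=2c^2$, $\hat s\hat t=2d^2$, $\hat r\hat s=2e^2$, whence $\hat r=\sqrt2\,ce/d$ and $\hat s=\sqrt2\,de/c$. Note that this is the Corollary's pair of values with the roles of $\hat r$ and $\hat s$ interchanged relative to its display; your write-up, by asserting that routine algebra ``leaves the claimed expressions,'' papers over both the failed cancellation and this swap. Everything else in your proposal is fine.
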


Recall that the odd invariants are defined modulo a rescaling. Let us introduce the following terminology.
\begin{Def}\label{ratioS} A quadrilateral $\dia ABCD$ is said to be in the \emph{standard $(\h,\s)$-position with ratio $h_e$} if
\begin{itemize}
\item The ordered triple $\D ABC\in \cS^{h_e\h}$ for $h_e>0$,
\item The triangle $\D CDA$ is positively oriented, and $(J\circ D_{\sqrt\mathcal{X}}\circ Z_{h_e})\cdot\D{CDA}\in \cS^{h_e\inv \s}$.
\end{itemize}
We shall denote such a standard quadrilateral by
\Eq{
\dia ABCD\in \cS_{h_e}^{\h,\s}
} and call the transformation 
\Eq{\label{upsidedown}
\Up_{h_e}^\mathcal{X}:=J\circ D_{\sqrt\mathcal{X}}\circ Z_{h_e}
} the \emph{upside-down transformation} of the quadrilateral $\dia ABCD$. If the odd parameters $\h,\s$ have already been fixed, we shall simply say $\D ABC$ has \emph{ratio $h_e$} with $\D CDA$ across the edge $e$. 
\end{Def}
By Proposition \ref{sigma}, if $D=(x_1,x_2,-y,z|\xi_1^+,\xi_2^+,\xi_1^-,\xi_2^-)$ for $y>0$, then $( \s_1,\s_2)=(-\frac{\xi_1^+}{\sqrt{yx_1}},-\frac{\xi_2^+}{\sqrt{yx_1}})$.
Note that the $\l$-lengths uniquely define 4 points $\dia ABCD\in \cS_{h_e}^{\h,\s}$ on the light cone by Lemma \ref{rst} and Corollary \ref{rstCor}. 

\begin{Lem} \label{Stand} Let $\dia ABCD\in\cS_{h_e}^{\h,\s}$ with cross-ratio $\mathcal{X}$. Then
\begin{itemize} 
\item[(1)] $Z_{-1}\cdot \dia ABCD\in \cS_{h_e}^{-\h,-\s}$ where $-\h:=(-\h_1,-\h_2)$;
\item[(2)] $\Psi\cdot\dia ABCD\in \cS_{h_e\inv}^{\h^{op},\s^{op}}$ where $\h^{op}:=(\h_2,\h_1)$;
\item[(3)] Let $\dia A'B'C'D':=\Up_{h_e}^\mathcal{X}\cdot \dia ABCD$. Then $\dia C'D'A'B'\in \cS_{h_e\inv}^{\s,-\h}$.
\end{itemize}
\end{Lem}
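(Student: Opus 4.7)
The plan is to verify each part by short computation using Lemma \ref{action}, exploiting the commutation structure of $Z_a, D_a, J,$ and $\Psi$ to avoid any long calculation. Preliminary remarks: the pairing, hence all $\l$-lengths and $\chi$, is preserved by $\SL(1|2)$ and (by direct inspection) also by $\Psi$; positive orientation of the constituent triangles is preserved by all transformations in question, since on the bosonic part they project to $SO_+(2,1)$.

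For part (1), $Z_{-1}$ is central in $SL(1|2)$, fixes all bosonic components, and negates every fermion. Thus $A,C$ are fixed, while the fermion string of $B$, namely $(h_e\h_1, h_e\inv\h_2, -h_e\inv\h_2, h_e\h_1)$, is replaced by its negative, which is standard form with odd parameters $h_e(-\h)$. Centrality then gives $\Up_{h_e}^\chi\cdot Z_{-1}\D CDA = Z_{-1}\cdot\Up_{h_e}^\chi\D CDA \in \cS^{h_e\inv(-\s)}$, so $Z_{-1}\dia ABCD\in\cS_{h_e}^{-\h,-\s}$ as claimed, with the ratio $h_e$ unchanged.

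For part (2), $\Psi$ fixes $A,C$ and sends the fermion string of $B$ to $(h_e\inv\h_2, h_e\h_1, -h_e\h_1, h_e\inv\h_2)$ while negating $z$. Using $\h_2\h_1=-\h_1\h_2$ to check the $z$-component, one reads off standard form with odd parameters $h_e\inv\h^{op}$: the ratio is inverted to $h_e\inv$ and the pair of odd entries is swapped. Direct verification of the commutation relations $\Psi J=J\Psi$, $\Psi D_a=D_a\Psi$, and $\Psi Z_a=Z_{a\inv}\Psi$ gives $\Psi\Up_{h_e}^\chi=\Up_{h_e\inv}^\chi\Psi$; since $\chi$ is $\Psi$-invariant, $\Up_{h_e\inv}^\chi\Psi\D CDA=\Psi\Up_{h_e}^\chi\D CDA\in\Psi\cdot\cS^{h_e\inv\s}=\cS^{h_e\s^{op}}=\cS^{(h_e\inv)\inv\s^{op}}$, yielding the second condition.

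For part (3), the first condition $\D C'D'A'\in\cS^{h_e\inv\s}$ is immediate from the definition of $\Up_{h_e}^\chi$. The relabeling $(A,B,C,D)\mapsto(C',D',A',B')$ permutes the edges $(a,b,c,d,e)$ to $(c,d,a,b,e)$, so $\chi$ is unchanged. The key observation is the group identity $\Up_{h_e\inv}^\chi\cdot\Up_{h_e}^\chi=Z_{-1}$, obtained by pushing $Z_{h_e\inv}$ through $J$ and $D_{\sqrt\chi}$ (both diagonal, so they commute with $Z_a$), using $Z_{h_e\inv}Z_{h_e}=I$ and the commutation relation $JD_{\sqrt\chi}=D_{\sqrt\chi\inv}J$, together with $J^2=Z_{-1}$ (both equal $\mathrm{diag}(-1,1,-1)$ in $SL(1|2)$). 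Therefore $\Up_{h_e\inv}^\chi\D A'B'C'=Z_{-1}\D ABC$, which by part (1) lies in $\cS^{h_e(-\h)}=\cS^{(h_e\inv)\inv(-\h)}$, completing the second condition. The potential main obstacle---tracking fermions through two successive applications of $\Up$---is thereby avoided, and the only delicate bookkeeping left is the sign check on the $z$-component in part (2) via the anti-commutativity of the $\h_i$.
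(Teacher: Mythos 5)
Your proof is correct, and for parts (1) and (2) it simply fleshes out what the paper dismisses with ``follows from the definition of the action.'' For part (3) you take a genuinely different route: the paper computes $B'=\Up_{h_e}^\chi\cdot B$ explicitly in coordinates and reads off the new odd invariant from the formula of Proposition \ref{sigma}, whereas you prove the group identity $\Up_{h_e\inv}^{\chi}\circ\Up_{h_e}^{\chi}=J^2=Z_{-1}$ (using $JD_a=D_{a\inv}J$ and the commutation of $Z_a$ with diagonal elements and $J$) and then reduce the second condition of Definition \ref{ratioS} for $\dia C'D'A'B'$ to part (1) applied to $\D ABC$. This is cleaner and avoids tracking fermions through the upside-down transformation; the paper's version has the minor advantage of exhibiting the image of $B$ explicitly, which is reused in the Ptolemy computations. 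One small imprecision: $Z_{-1}$ is \emph{not} central in $SL(1|2)$ --- conjugation by $\mathrm{diag}(-1,1,-1)$ negates the odd entries of a supermatrix (e.g.\ $Z_{-1}U_\a Z_{-1}\inv=U_{-\a}$); it only commutes with the purely bosonic elements. Since the only element you commute it past is $\Up_{h_e}^\chi=J\circ D_{\sqrt\chi}\circ Z_{h_e}$, which is bosonic, the argument stands, but you should state the weaker (and correct) commutation property.
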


\begin{proof} (1) and (2) follow from the definition of the action.

(3) By definition $\D C'D'A'\in \cS^{h_e\inv \s}$, while $B'$ is of the form
\Eqn{
B&\mapsto t(a^2,a^{-2},-1,\frac{\h_1\h_2}{2}|a\h_1,a\h_2,a\inv \h_2,-a\inv \h_1),
}
where $a=\sqrt\mathcal{X}\inv$. Hence $\s_i^{new}:=-\frac{\xi_i^+}{\sqrt{yx_1}}=-\frac{a\h_i}{\sqrt{1\cdot a^2}}=-\h_i$ or $\s_i^{new}=-\h_i$.

\end{proof}
\begin{Prop}\label{ratiounique} If $\h\in[\D ABC]$ and $\s\in[\D CDA]$ for two positively ordered triples $\D ABC$ and $\D CDA$, then there exists $g\in \SL(1|2)$ such that $g\cdot \dia ABCD\in \cS_{h_e}^{\h,\til{\s}}$ for some $h_e>0$, where $\til{\s}=\pm\s$ or $\pm\s^{op}$. The choice of $\til{\s}$ is uniquely determined, while $h_e$ is uniquely defined by $\h$ and $\s$ up to multiplication by constants $c_1$ and $c_2$ such that $c_1\h=\h$ and $c_2\s=\s$.
\end{Prop}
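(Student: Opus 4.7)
The plan is to first use Proposition~\ref{orbit} to normalize $\D ABC$ into standard position, then track how the residual $\SL(1|2)$-freedom acts on the $\s$-invariant of $\D CDA$.

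First I choose $g_0 \in \SL(1|2)$ with $g_0 \cdot \D ABC \in \cS^{\h_0}$ for some representative $\h_0 \in [\h]$, writing $\h_0 = c\h$ for an invertible even $c$; should instead $\h_0 = c\h^{op}$ hold, I precompose $g_0$ with $\Psi$, which by Lemma~\ref{Stand}~(2) additionally exchanges $\s \leftrightarrow \s^{op}$, accounting for the branches $\til\s \in \{\pm\s^{op}\}$. Acting further by $Z_{h_e/c}$ then brings $\D ABC \in \cS^{h_e\h}$ for any invertible even $h_e$ with positive body.

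Next I track what happens to the fourth point $D$. By Proposition~\ref{sigma}, the $\s$-invariant of $\D CDA$ immediately after $g_0$ is $\s_0 = c'\s$ for some invertible even $c'$; by the action in Lemma~\ref{action}, the subsequent rescaling by $Z_{h_e/c}$ transforms this into $(cc'/h_e)\s$. A direct calculation, exactly parallel to Proposition~\ref{sigma} and Corollary~\ref{rstCor}, shows that the upside-down transformation $\Up_{h_e}^\chi$ of Definition~\ref{ratioS} then puts $\D CDA$ into standard position with odd parameter $h_e\inv(cc'/h_e)\s$. Matching against the required target $h_e\inv\til\s$ gives $\til\s = (cc'/h_e)\s$; imposing $\til\s = \pm\s$ forces $h_e = \pm cc'$, with the sign chosen so that the body of $h_e$ is positive. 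This simultaneously determines $\til\s$ and $h_e$.

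For uniqueness, any other $g$ putting $\dia ABCD$ into $\cS^{\h,\til\s'}_{h_e'}$ with the same representative $\h$ cannot employ $Z_{-1}$ or $\Psi$, since by Lemma~\ref{Stand}~(1)--(2) these change the representative $\h$ to $-\h$ or $\h^{op}$ respectively. The remaining continuous freedom is $Z_{c_1}$ with $c_1\h = \h$, which preserves $\cS^{h_e\h}$ while multiplying the $\s$-invariant of $\D CDA$ by $c_1\inv$. For the resulting $\til\s'$ still to be of the form $\pm\s$, the factor $c_1\inv$ must be absorbable into a stabilizer of $\s$, i.e., $c_1\inv = c_2\inv$ with $c_2\s = \s$; correspondingly $h_e$ rescales by the product $c_1 c_2$. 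This pins down $\til\s$ uniquely among $\{\pm\s,\pm\s^{op}\}$ and yields the claimed uniqueness of $h_e$.

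The main obstacle is the bookkeeping of the two discrete involutions $\Psi$ and $Z_{-1}$ together with the sign of the body of the even scalar $cc'$: each of the four possibilities $\til\s \in \{\pm\s,\pm\s^{op}\}$ corresponds to a specific combination of whether $\Psi$ was used and the sign of $cc'$, and one must verify that these choices are forced (not free) by the requirement that the representative $\h$ be preserved exactly.
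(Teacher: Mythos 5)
Your argument is correct and follows essentially the same route as the paper: normalize $\D ABC$ into standard position realizing the given representative $\h$, read off via the upside-down transformation that the induced representative of $[\D CDA]$ is $a\s$ or $a\s^{op}$ for some invertible even $a$, extract $h_e$ from the scaling, and derive uniqueness of $\til\s$ and of $h_e$ (up to the stabilizing constants $c_1,c_2$) from the fact that the residual freedom is the $\cZ$-subgroup. The one small imprecision is attributing the $\til\s=\pm\s^{op}$ branches solely to whether $\Psi$ was used to correct the $\h$-representative — the $op$-ambiguity arises intrinsically because $[\s]$ is an unordered pair modulo rescaling, independently of the normalization of $\D ABC$ — but this bookkeeping point does not affect the validity of either the existence or the uniqueness argument.
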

\begin{proof} By Proposition \ref{orbit}, there exists $g\in \SL(1|2)$ such that $g\cdot \D ABC\in \cS^{\h}$. Then $(\Up_{h_e}^\mathcal{X}\circ g)\cdot \D CDA$ is in standard position hence belongs to $\cS^{\s'}$ for some $\s'\in [\D CDA]$ and therefore equals $a\s$ or $a\s^{op}$ with $a\neq 0$. By Proposition \ref{stab}, the stabilizer for $\D ABC$ is given by $Z_c$ for some $c>0$, hence the sign of $a$ and the order $\s$ or $\s^{op}$ is determined uniquely. Let $a=\pm h_e\inv$ with $h_e>0$ and $\til{\s}:=h_e\s'$,  applying $Z_{h_e}\inv\in SL(1|2)$ we see that $\dia ABCD\in \cS_{h_e}^{\h,\til{\s}}$ as required.

Finally, if $\dia ABCD\in\cS_{h_e}^{\h,\s}$, and $Z_c\cdot \dia ABCD\in \cS_{h_e'}^{\h,\s}$, then $ch_e\h = h_e'\h$ and $c\s = \s$, which implies $\frac{h_e'}{h_e}=c_1c_2$ satisfying the condition above.
\end{proof}

\begin{Cor}The stabilizers of $\dia ABCD\in \cS_{h_e}^{\h,\s}$ are of the form $Z_{c}$ for some even element $c$ such that $c-1$ is an annihilator of $\theta_i$, $\sigma_i$, $i=1,2$.
\end{Cor}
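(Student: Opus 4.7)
My plan is to deduce the stabilizer of $\dia ABCD\in \cS_{h_e}^{\h,\s}$ from two applications of Proposition \ref{stab}, bridged by the upside-down transformation $\Up_{h_e}^\chi=J\circ D_{\sqrt\chi}\circ Z_{h_e}$ from Definition \ref{ratioS}. The only real computational step will be a short $3\x 3$ matrix verification that $Z_c$ commutes with $\Up_{h_e}^\chi$, and this is what I expect to be the main (minor) obstacle.

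First I will observe that any element $g$ stabilizing the quadrilateral must in particular stabilize the positively ordered triple $\D ABC\in\cS^{h_e\h}$. Proposition \ref{stab} will then force $g$ into the subgroup $\cZ$, so $g=Z_c$ for some even invertible $c$, and the stabilizer conditions read $(c-1)(h_e\h_1)=0$ and $(c-1)(h_e\inv\h_2)=0$. Since $h_e$ has a nonzero body these collapse to $(c-1)\h_i=0$ for $i=1,2$, yielding the $\h$-half of the advertised annihilator condition.

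To extract the $\s$-half I will appeal to Definition \ref{ratioS} and Lemma \ref{Stand}(3), which together tell us that $\Up_{h_e}^\chi\cdot\D CDA$ is a positively ordered triple in $\cS^{h_e\inv\s}$. The key step is the commutation of $Z_c$ with $\Up_{h_e}^\chi$: commutation with $Z_{h_e}$ is automatic inside $\cZ$, commutation with $D_{\sqrt\chi}$ is clear since both are diagonal, and $JZ_cJ\inv=Z_c$ is an immediate $3\x 3$ matrix calculation. Granting this, $Z_c$ will stabilize $D$ if and only if it stabilizes $\Up_{h_e}^\chi\cdot D$, at which point Proposition \ref{stab} applied to the new triple yields $(c-1)\s_i=0$ for $i=1,2$.

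Combining the two halves gives exactly the annihilator condition in the statement. The converse — that any such $Z_c$ does stabilize $\dia ABCD$ — is immediate from the forward direction of the same two invocations of Proposition \ref{stab}, using that $A$ and $C$ are purely bosonic and hence fixed by every $Z_c$. As already noted, the only step that is not pure bookkeeping is the commutation $JZ_cJ\inv=Z_c$; once that is recorded, the corollary really is a corollary.
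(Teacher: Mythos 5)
Your argument is correct and is essentially the paper's own proof unpacked: the paper simply cites the computation at the end of Proposition \ref{ratiounique} (which itself rests on Proposition \ref{stab} applied to $\D ABC$ and to $\Up_{h_e}^\chi\cdot\D CDA$) to conclude $c\h=\h$ and $c\s=\s$. The only ingredient you make explicit that the paper leaves implicit here is the commutation $Z_c\circ\Up_{h_e}^\chi=\Up_{h_e}^\chi\circ Z_c$, which is indeed the correct bridge (and is used openly by the authors later, in the Ptolemy computation).
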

\begin{proof} From the proof above, we have $c\h=\h$ and $c\s=\s$, hence the conclusion follows.
\end{proof}

\section{Spin structures and fatgraph connections}\label{sec:spin}

Let $\t$ be a trivalent fatgraph spine that is dual to some triangulation $\D$ of the surface $F:=F_g^s$ determined up to isotopy. Let $\t_0,\t_1$ denote the set of vertices and edges of $\t$ respectively. In particular each vertex $v\in\t_0$ corresponds to a triangle in $\D$.  Let $\w$ be an orientation on the edges $\t_1\subset\t$. As in \cite{PZ}, we define a \emph{fatgraph reflection} at a vertex $v$ of $(\t,\w)$ to reverse the orientations of $\w$ on every edge of $\t$ incident to $v$.
\begin{Def}\label{fatgraph} We define $\cO(\t)$ to be the equivalence classes of orientations on a trivalent fatgraph $\tau$ spine of $F$, where the equivalence relation is given by $\w_1\sim \w_2$ iff $\omega_1$ and $\omega_2$ differ by finite number of fatgraph reflections. It is an affine $H^1$-space where the cohomology group $H^1:=H^1(F;\Z_2)$ acts on $\cO(\t)$ by changing the orientation of the edges along cycles.
\end{Def}

In \cite{PZ} various realizations of the spin structures on the surface $F$ are described. 
\begin{Def}\cite{Nat} A spin structure is determined by a lift $\til{\rho}:\pi_1(F)\to SL(2,\R)$ of the Fuchsian representation $\rho:\pi_1(F)\to PSL(2,\R)$  familiar from Teichm\"uller theory.
\end{Def}

In fact, given a spin structure, one can define a quadratic form $q:H_1(F;\Z_2)\to \Z_2$ on simple cycles $\c\in\pi_1(F)$ by
\Eq{\label{quadform}
q([\c])=\sign ~Tr(\til{\rho}(\c)),
}
and extend to all of $H_1(F;\Z_2)$ by $q(a+b)=q(a)+q(b)+a\cdot b$ where $a\cdot b$ denotes the intersection form. Then we have 

\begin{Prop}\label{isospin}\cite{CR, PZ} The set of spin structures is isomorphic to the space of quadratic forms $\cQ(F)$ on $H_1(F;\Z_2)$, and also isomorphic to $\cO(\t)$, as affine $H^1$-spaces. 
\end{Prop}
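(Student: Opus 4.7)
The plan is to exhibit both claimed bijections separately and verify they commute with the affine $H^1$-actions on all three sets. First I would promote the assignment $\til\rho\mapsto q$ given on simple closed curves by $q([\c]):=\sign\mathrm{Tr}\,\til\rho(\c)$ to a quadratic form on $H_1(F;\Z_2)$ via the Johnson polarization $q(a+b)=q(a)+q(b)+a\cdot b$. Well-definedness reduces to the classical calculation: for simple loops $\alpha,\beta\in\pi_1(F)$ meeting transversely in a single point, the $SL(2,\R)$ trace identity $\mathrm{Tr}(AB)+\mathrm{Tr}(AB\inv)=\mathrm{Tr}(A)\mathrm{Tr}(B)$ together with a sign analysis of the two lifts of $\alpha\beta$ yields the polarization identity, and the general case reduces to this one by standard surface-topology manipulations. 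Equivariance with respect to $H^1(F;\Z_2)$ is then automatic: twisting $\til\rho$ by $\alpha\in H^1$ sends $\til\rho(\c)$ to $(-1)^{\alpha([\c])}\til\rho(\c)$, shifting $\sign\mathrm{Tr}\,\til\rho(\c)$ by $\alpha([\c])$; since both $H^1$-actions are simply transitive, bijectivity follows.

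Second, for the identification $\cO(\t)\cong\cQ(F)$ I would use the combinatorial construction from \cite{PZ}: given an orientation $\w$ and a class $[\c]\in H_1(F;\Z_2)$ represented as a closed edge-path $e_1\cdots e_n$ in $\t$ (possible since $\t$ is a spine of $F$), define $q_\w([\c])$ to be the parity of the number of indices $i$ at which $e_i$ is traversed against $\w$, plus the number of straight passages of $\c$ through trivalent vertices of $\t$. Two local checks are needed. Invariance under a fatgraph reflection at a vertex $v$ follows because reversing the three edges incident to $v$ alters the orientation-disagreement count by the number of traversals of $\c$ through $v$, which is exactly compensated by the parity change in the straight-turn count forced at $v$. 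Invariance under the choice of edge-path representative of $[\c]$ is an analogous vertex-by-vertex verification.

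The main technical obstacle is the polarization identity $q_\w(a+b)=q_\w(a)+q_\w(b)+a\cdot b$. Writing $\c_a,\c_b$ for edge-path representatives of $a,b$ in $\t$ and $\c_a\ast\c_b$ for a concatenation representing $a+b$, each edge shared by $\c_a$ and $\c_b$ cancels in the orientation-mismatch count, while each transverse intersection of the two loops in $F$ corresponds to a vertex of $\t$ at which a straight turn is introduced in $\c_a\ast\c_b$ but absent in either summand, contributing exactly $a\cdot b$ modulo $2$. The remaining straight-turn parities are additive by construction. This local-to-global bookkeeping is carried out in \cite{PZ} in a closely parallel setup and can be imported essentially verbatim.

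Finally, equivariance with respect to $H^1(F;\Z_2)$ on the combinatorial side is visible directly: reversing $\w$ along a loop dual to a class $[c^*]\in H^1$ shifts $q_\w([\c])$ by the intersection pairing $[c^*]\cdot[\c]$, which is precisely the action of $[c^*]$ on $\cQ(F)$. Combined with simple transitivity on both sides, this promotes the well-defined map $\cO(\t)\to\cQ(F)$ to an $H^1$-equivariant bijection, and composition with the isomorphism from the first part identifies the three pointed affine $H^1$-spaces --- spin structures, quadratic forms, and orientation classes --- as asserted.
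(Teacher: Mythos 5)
The paper does not actually prove this proposition; it is quoted from \cite{CR,PZ}, and the closest in-paper content is the corollary following Theorem \ref{main_group}, which derives the explicit combinatorial formula \eqref{ourquad}, $q([\c])=(-1)^{L_\c}(-1)^{N_\c}$, by computing traces of products of the left-turn and right-turn matrices. So your proposal should be judged on its own merits. Your first half (lifts $\til\rho\leftrightarrow$ quadratic forms via $q([\c])=\sign\mathrm{Tr}\,\til\rho(\c)$, equivariance under twisting by $H^1(F;\Z_2)$, and bijectivity from simple transitivity of both actions) is the standard argument and is fine as a sketch, granting the deferred well-definedness of the polarization on simple curves.

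The second half has a genuine gap: your combinatorial formula for $q_\w$ counts orientation disagreements ``plus the number of straight passages of $\c$ through trivalent vertices of $\t$.'' At a trivalent vertex an edge-path has only two options, a left turn or a right turn; there is no straight passage, so that correction term is either undefined or identically zero. If it is zero, your $q_\w$ is the evaluation of a cohomology class on $[\c]$, hence \emph{linear} in $H_1(F;\Z_2)$, and cannot satisfy $q(a+b)=q(a)+q(b)+a\cdot b$ for $g\ge 1$. The term you are missing is the parity of the number of left (equivalently, right) turns, as in \eqref{ourquad}; it is exactly this turning count that produces the intersection-form defect in the polarization identity. Relatedly, your invariance argument under fatgraph reflection is off: a reflection at $v$ changes only $\w$, not the path, so it cannot be ``compensated'' by any change in a turn count; what actually happens is that each passage of $\c$ through $v$ toggles the agreement status of exactly two edge-traversals, so the disagreement parity is unchanged on its own. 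Replacing your correction term by $L_\c$ and rerunning the checks (reflection invariance, independence of the edge-path representative, and the polarization identity via resolving intersections) would repair the argument and bring it in line with \cite{PZ} and with the paper's formula \eqref{ourquad}.
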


In terms of oriented fatgraphs, under the chosen isomorphism in \cite{PZ} between $\cO(\t)$ and $\cQ(F)$, the evolutions for spin structure under a change of the triangulations generated by flips that preserves the corresponding quadratic forms are described. By choosing appropriate representatives of the orientation classes, the flip transformation is equivalent to the rule depicted in Figure \ref{flipgraph}, i.e., the middle arrow changes from pointing up to pointing left, and the upper right arrow changes orientation, while the other branches remain the same.

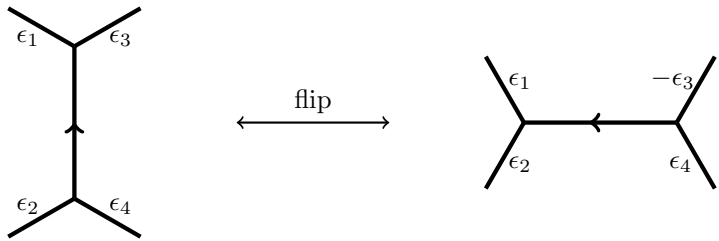
\begin{figure}[h!]
\begin{center}

\begin{tikzpicture}[ultra thick, baseline=1cm]
\draw (0,0)--(210:1) node[above] at (210:0.7){$\e_2$};
\draw (0,0)--(330:1) node[above] at (330:0.7){$\e_4$};
    \draw[ 
 	ultra thick,
        decoration={markings, mark=at position 0.5 with {\arrow{>}}},
        postaction={decorate}
        ]
        (0,0) -- (0,2);
\draw[yshift=2cm] (0,0)--(30:1) node[below] at (30:0.7){$\e_3$};
\draw[yshift=2cm] (0,0)--(150:1) node[below] at (150:0.7){$\e_1$};
\end{tikzpicture}
\begin{tikzpicture}[baseline]
\draw[<->, thick](0,0)--(2,0);
\node[above] at (1,0) {flip};
\node at (-1,0){};
\node at (3,0){};
\end{tikzpicture}
\begin{tikzpicture}[ultra thick, baseline]
\draw (0,0)--(120:1) node[above] at (100:0.3){$\e_1$};
\draw (0,0)--(240:1) node[below] at (260:0.3){$\e_2$};
    \draw[ 
        decoration={markings, mark=at position 0.5 with {\arrow{<}}},
        postaction={decorate}
        ]
        (0,0) -- (2,0);
\draw[xshift=2cm] (0,0)--(60:1) node[above] at (100:0.3){$-\e_3$};
\draw [xshift=2cm](0,0)--(-60:1) node[below] at (-80:0.3){$\e_4$};
\end{tikzpicture}
\end{center}
\caption{Spin graph evolution with $\e_i$ denote orientations}
\label{flipgraph}
\end{figure}

A slight modification of the rule has to be made if some of the branches are identified, e.g., in the cases $F_1^1$ and $F_0^3$. This will be described in Appendix \ref{sec:specialPtolemy}.

In the next section, starting from an element $s\in \cO(\t)$, we shall construct the one-to-one correspondence providing the lift $\til{\rho}$ as above. Henceforth we shall also refer to elements of $\cO(\t)$ as spin structures as well.

\begin{Def}
Fix a fatgraph $\t$, we denote by $o_{\w}(e)$ the orientation of the edge $e\in\t$ in the orientation $\w$. We define $\d_{\w_1,\w_2}:\t_1\to \Z_2$ by 
\Eqn{
\d_{\w_1,\w_2}(e):=\case{+1& o_{\w_1}(e)=o_{\w_2}(e),\\-1& o_{\w_1}(e)\neq o_{\w_2}(e),}
}
which defines an element in $H^1(F;\Z_2)$.
\end{Def}

\begin{Def}\label{graphconnect}\cite{P2} Let $G$ be a group. A \emph{$G$-graph connection} on $\t$ is the assignment $h_e\in G$ to each oriented edge $e$ of $\t$ so that $h_{\over[e]}=h_e\inv$ if $\over[e]$ is the opposite orientation to $e$. Two assignments $\{h_e\},\{h_e'\}$ are equivalent iff there are $t_v\in G$ for each vertex $v$ of $\t$ such that $h_e'=t_v h_e t_w\inv$ for each oriented edge $e\in\t_1$ with initial point $v$ and terminal point $w$.
\end{Def}

\noindent As an immediate consequence of Proposition \ref{isospin}, we obtain

\begin{Cor}
The space of spin connections on $F$ is identified with $\mathbb{Z}_2$-graph connections on a given fatgraph $\tau$ of $F$.
\end{Cor}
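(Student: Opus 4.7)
The plan is to leverage Proposition \ref{isospin}, which already identifies the space of spin structures with $\cO(\t)$ as affine $H^1$-spaces. Thus the corollary reduces to constructing a natural bijection
\Eqn{
\cO(\t)\;\longleftrightarrow\;\{\Z_2\text{-graph connections on }\t\}/\sim.
}
Concretely, I would fix once and for all a reference orientation $\w_0$ on $\t$. Given any other orientation $\w$, the function $\d_{\w_0,\w}:\t_1\to\Z_2$ defined earlier in this section assigns $+1$ or $-1$ to each edge according to whether $\w$ agrees with $\w_0$ there. Because in $\Z_2$ every element is its own inverse, the condition $h_{\over[e]}=h_e\inv$ in Definition \ref{graphconnect} is vacuous, so the assignment $e\mapsto \d_{\w_0,\w}(e)$ is automatically a $\Z_2$-graph connection; conversely any $\Z_2$-graph connection $\{h_e\}$ determines a unique orientation $\w$ by reversing $\w_0$ precisely on the edges with $h_e=-1$. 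Hence on the level of raw assignments the correspondence $\w\leftrightarrow\d_{\w_0,\w}$ is a bijection of $\Z_2^{\t_1}$ onto itself.

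Next I would match the two equivalence relations. A fatgraph reflection at a vertex $v$ reverses the orientation of every half-edge at $v$ (so an edge joining $v$ to a distinct vertex $w$ has its $\d$-value negated, while a loop at $v$ has its orientation reversed twice and thus keeps the same $\d$-value). This is exactly the effect on $\d_{\w_0,\w}$ of the vertex gauge transformation $t_v=-1$, $t_u=+1$ for $u\neq v$: for a non-loop edge from $v$ to $w$ one gets $h_e'=t_vh_et_w\inv=-h_e$, while for a loop at $v$ one gets $h_e'=t_vh_et_v\inv=h_e$ in $\Z_2$. Since such single-vertex sign flips generate the full gauge group $\Z_2^{\t_0}$ (any gauge element is a product of them), finite sequences of fatgraph reflections generate precisely the gauge equivalence on $\Z_2$-graph connections. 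The bijection above therefore descends to an isomorphism of equivalence classes.

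Composing with Proposition \ref{isospin}, one obtains the desired identification of the space of spin structures on $F$ with the space of $\Z_2$-graph connections on $\t$, naturally as affine $H^1(F;\Z_2)$-spaces (since the residual $H^1$-action on $\cO(\t)$ by changing orientations along cycles matches the natural $H^1$-action on $\Z_2$-graph connections up to gauge). The only step requiring any care is the correct bookkeeping for loop edges under a fatgraph reflection, which I expect to be the main (minor) obstacle; otherwise the proof is bookkeeping on top of the already-established Proposition \ref{isospin}.
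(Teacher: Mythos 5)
Your proposal is correct and follows essentially the same route as the paper: fix a reference orientation, send $\w\mapsto\d_{\w_0,\w}$ to get a $\Z_2$-labeling of edges, and observe that a fatgraph reflection at $v$ is exactly the gauge transformation $t_v=-1$, then invoke Proposition \ref{isospin}. The paper's own proof is a three-sentence version of this; your extra bookkeeping (the bijection on raw assignments, the generation of the gauge group by single-vertex flips, and the loop-edge consistency check) just fills in details the paper leaves implicit.
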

\begin{proof}
Pick an orientation $\omega$ on fatgraph $\tau$. For any orientation $\omega'$, we label the edge by $+1$ if they agree on that edge and by $-1$ if they do not agree. Classes of orientations give rise to classes of $\mathbb{Z}_2$-connections, since the reflection of orientation at the vertices corresponds to multiplication by $-1$.
\end{proof}

\noindent For a Lie (super) group $G$, the following statement is true (see e.g., \cite{Dar}). 

\begin{Prop}\label{isoflat} The moduli space of flat $G$-connections on $F$ is isomorphic to the space of equivalence classes of $G$-graph connections on $\t$.
\end{Prop}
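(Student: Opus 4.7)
The plan is to invoke the standard holonomy correspondence between flat connections and representations of $\pi_1$, and then set up a parallel bijection on the graph side. First, I would recall that via holonomy, the moduli space of flat $G$-connections on $F$ is identified with $\mathrm{Hom}(\pi_1(F),G)/G$, where $G$ acts by conjugation. Since $F=F_g^s$ with $s\geq 1$ and $\tau$ is a trivalent fatgraph spine dual to an ideal triangulation $\D$, the inclusion $\tau\hookrightarrow F$ is a homotopy equivalence, so $\pi_1(\tau)\cong\pi_1(F)$ canonically (up to a change of basepoint). Thus it suffices to produce a bijection between equivalence classes of $G$-graph connections on $\tau$ and $\mathrm{Hom}(\pi_1(\tau),G)/G$.

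Next I would construct this bijection in the following way. Given a $G$-graph connection $\{h_e\}_{e\in\tau_1}$, fix a basepoint $v_0\in\tau_0$ and define $\what{\rho}(\gamma):=h_{e_1}h_{e_2}\cdots h_{e_n}$ for any loop $\gamma$ at $v_0$ expressed as a concatenation of oriented edges $e_1,\dots,e_n$. The condition $h_{\over[e]}=h_e\inv$ in Definition \ref{graphconnect} guarantees that backtracking is killed, so $\what{\rho}$ only depends on the homotopy class of $\gamma$, yielding a homomorphism $\pi_1(\tau,v_0)\to G$. For the reverse direction, pick a maximal tree $T\subset\tau$, set $h_e=1_G$ for every $e\in T$, and for each $e\notin T$ let $\gamma_e$ be the unique loop obtained by adjoining $e$ to the path in $T$ between its endpoints; then set $h_e:=\rho(\gamma_e)$. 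These constructions are mutually inverse on the nose (once a tree and basepoint are chosen).

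Then I would check that the two equivalence relations correspond. A gauge transformation $h_e\mapsto t_v h_e t_w\inv$ on an oriented edge $e:v\to w$ has the effect of conjugating the associated holonomy of every loop at $v_0$ by $t_{v_0}$, while deeper in the graph the $t_v$'s telescope out; so equivalent graph connections give conjugate representations. Conversely, two graph connections producing the same representation can be related by first normalizing both to be trivial on a common maximal tree $T$ (using vertex gauge actions along $T$, propagating outward from $v_0$), and then the non-tree edges must coincide. Changing the basepoint or the maximal tree alters the representation only by an inner automorphism of $G$, which is absorbed by the conjugation quotient. Combined with the identification $\pi_1(\tau)\cong\pi_1(F)$ and the holonomy correspondence, this yields the claimed isomorphism.

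The main subtlety I anticipate is the Lie supergroup setting: formally, $\mathrm{Hom}(\pi_1(F),G)$ must be interpreted as the functor of points (so that odd parameters of $G$ can enter), but since $\pi_1(F)$ is an ordinary group the graph-theoretic argument above only uses multiplication and inversion in $G$, so it works uniformly over any base superalgebra. A secondary point worth verifying is that when the fatgraph has loop edges or multi-edges between the same pair of vertices (which happens for $F_1^1$ or $F_0^3$), the maximal-tree argument and the telescoping of vertex gauges still go through without change, since the combinatorial setup is local at each edge.
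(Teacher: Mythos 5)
Your argument is correct. Note, however, that the paper offers no proof of this proposition at all --- it is stated as a known fact with a citation to Darling's book --- so there is nothing to compare against; what you have written is precisely the standard holonomy/maximal-tree argument one would extract from such a reference (retract $F$ onto the spine $\tau$, trivialize along a maximal tree, identify the non-tree edges with free generators of $\pi_1$, and check that vertex gauge transformations telescope to conjugation by $t_{v_0}$). The only point to be careful about is the ordering convention: with the paper's convention $h_e'=t_vh_et_w^{-1}$ for $e:v\to w$, the product along a loop telescopes provided the holonomy is composed in the corresponding order, a harmless bookkeeping matter. Your remarks on the supergroup setting and on loop/multi-edges are apt and handle the only genuine subtleties.
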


The case which will be exploited fully in the next section is when $G=\R_+$, i.e., the set of all even elements in $S$ (underlying the Grassmann algebra over $\mathbb{R}$) with positive bodies.

Since the product of $h_e$ around each cycle gives the monodromies defining the flat connection, the proposition implies that one can always rescale $h_e$ around vertices in such a way such that if $a,b,e$ are three edges oriented towards a vertex $v$, we have $h_a h_b h_e=1$, and this uniquely defines the set of equivalence classes of graph connections on $\t$. In particular for $F_g^s$ it is known that the moduli space of flat $G$-connections has dimension $(2g+s-1)\dim(G)$.
\section{Decorated super-Teichm\"uller space}\label{sec:teich}
In this section we describe the coordinates of the decorated super-Teichm\"uller space using the basic calculation from Section \ref{sec:basic}. 

We fix a surface $F:=F_g^s$ with genus $g\geq 0$ and $s\geq 1$ punctures where $2g+s-2>0$. We fix a triangulation $\D$ of $F$ and the corresponding trivalent fatgraph $\t\subset F$. We consider the lift of $\D$ to an ideal triangulation $\til{\D}$ with corresponding fatgraph $\til{\t}$ of the universal cover $\pi:\til{F}\to F$ where $\til{F}$ is topologically equivalent to the unit disk with hyperbolic metric, and where the ideal vertices $\til{\D}_\oo$ of $\til{\D}$ lie on the boundary circle $S^1$ at infinity.

\begin{Def}Given $F,\D$ as above. We define the coordinate system $\til{C}(F,\D)$ as follows:
\begin{itemize}\item we assign to each edge $e$ of $\D$ a positive even coordinate called the $\l$-length, also denoted by $e$;\\
\item we assign to each triangle of $\D$ two ordered odd coordinates $(\h_1,\h_2)$;\\
\item we assign to each edge $e$ of a triangle of $\D$ a positive even coordinate $h_e$, called the \emph{ratios}, such that if $h_e$ and $h_e'$ are assigned to two triangles sharing the same edge $e$ we have $h_e h_e'=1$.
\end{itemize}
The odd coordinates are defined up to an overall sign changes $\h_i\corr -\h_i$, as well as an overall involution $(\h_1,\h_2)\corr (\h_2,\h_1)$.
\end{Def}

Note that the ratios $\{h_e\}$ uniquely define an $\R_+$-graph connection on $\t$, where the oriented edge of $\t$ pointing towards a triangle crossing the edge $e$ will have the value $h_e$ of that triangle assigned.

\begin{Def} Let $\vec[c]\in \til{C}(F,\D)$ be a coordinate vector. If $h_a,h_b,h_e$ are ratios assigned to a triangle $T$ with odd coordinates $(\h_1,\h_2)$, then a \emph{vertex rescaling at $T$} of $\vec[c]$ is the new coordinate vector obtained by changing 
\Eq{
(h_a,h_b,h_e,\h_1,\h_2)\mapsto (u h_a,u h_b,u h_e,u\inv\h_1,u\h_2)
} for some $u>0$, and all other coordinates fixed. Two coordinate vectors of $\til{C}(F,\D)$ are said to be equivalent if they are related by a finite number of vertex rescalings. In particular the underlying graph connections on $\t$ are equivalent (cf. Definition \ref{graphconnect}).
\end{Def}

 Let $C(F,\D):=\til{C}(F,\D)/\sim$ be the equivalence classes of coordinate vectors. Then by Proposition \ref{isoflat} it can be represented by coordinates with $h_a h_b h_e=1$ for the ratios of the same triangle. We can easily deduce that
\Eq{
C(F,\D)\simeq \R_+^{8g+4s-7|8g+4s-8}/\Z_2\x\Z_2
} in accordance with the dimension given in \cite{Nat} (with an extra $s$-dimension given by decorations). The coordinates naturally pullback to coordinates on the ideal triangulations $\til{\D}$ of the universal cover $\til{F}$ by $\pi$.

\begin{Def} For each fatgraph $\t\subset F$ and each spin structure $s\in \cO(\t)$, we shall fix one representative orientation which will be denoted by $\w_{s,\t}$ such that $[\w_{s,\t}]=s\in\cO(\t)$.
\end{Def}

As we have seen in Proposition \ref{ratiounique}, the transformations involving ordered triples and quadrilaterals in $\cL_0$ possess some freedom involving the $\cZ$-subgroup elements. In order to fix this degree of freedom (which will be absorbed into the ratios $h_e$), we introduce the following definition.
\begin{Def} Given a coordinate vector $\vec[c]\in \til{C}(F,\D)$, a transformation $g\in \SL(1|2)$ is called \emph{$\vec[c]$-admissible} if $g$ is a composition only of 
\begin{itemize}
\item prime transformations $P_{h_a,h_b}^{\h}$;
\item the upside-down transformations $\Up_{h_e}^\mathcal{X}:=J\circ D_{\sqrt\mathcal{X}}\circ Z_{h_e}$;
\item sign changes $J^2=Z_{-1}$ and involution $\Psi$,
\end{itemize}
where $h_a,h_b$ are ratios of the triangle with odd coordinates $\h$ with $\mathcal{X}$ and $h_e$ the respective cross-ratio and ratio of the diagonal of the quadrilateral to be affected by the upside-down transformation.
\end{Def}

Now we can state the first main result of the paper.

\begin{Thm}\label{main_lift} Fix $F,\D,\t$ as before. Let $\w_{\sigma}:=\w_{s_{\sigma},\t}$ correspond to a specified spin structure $s_{\sigma}$ of $F$, and let $\w_{\iota}:=\w_{s_{\iota},\t}$ be the representative of another spin structure $s_{\iota}$. Given a  coordinate vector $\vec[c]\in \til{C}(F,\D)$, there exists a map called the \emph{lift},
\Eqn{
\ell_{\w_{\sigma},\w_{\iota}}: \til{\D}_\oo \to \cL_0,} such that if $\dia ABCD$ is the image of a quadrilateral from $\til{\D}_\oo$ with coordinates labeled as in Figure \ref{basic}, $\D ABC$ has ratio $h_e$ with $\D CDA$ across the edge $e$, and $t$ is the edge of the fatgraph $\t$ with orientation $\w$ pointing from $\D CDA\to\D ABC$, then 
\Eq{\label{liftingcase}t:\D_{ABC}\tto \D_{CDA}\=>\case{g\cdot \dia ABCD\in \cS_{h_e}^{\h,\s}&  \mbox{ if }\d_{\w_{\sigma},\w_{\iota}}(t)=1,\\g\cdot \dia ABCD\in \cS_{h_e}^{\h,\s^{op}}& \mbox{ if }\d_{\w_{\sigma},\w_{\iota}}(t)=-1}}
for some $g\in \SL(1|2)$ that is $\vec[c]$-admissible.

The lift is uniquely determined up to post-composition by $\SL(1|2)$, and only depends on the equivalence classes $C(F,\D)$ of the coordinates.
\end{Thm}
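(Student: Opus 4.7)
The plan is to construct $\ell$ by traversing the dual fatgraph $\til{\t} \subset \til{F}$. The crucial structural observation is that, since $\til{F}$ is a disk and $\til{\D}$ has all its vertices at infinity, the dual graph $\til{\t}$ is an infinite tree, so each edge can be crossed in turn without any cycle-consistency obstruction. All local choices will be dictated by the admissibility clause \eqref{liftingcase}, and the only residual global freedom will be the initial placement of a single base triangle.

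First I would handle the base case: pick any root triangle $T_0 = \D A_0B_0C_0$ of $\til{\D}$ and, using the $\l$-lengths, ratios $h_{a_0}, h_{b_0}, h_{e_0}$, and odd parameters $\h = (\h_1, \h_2)$ assigned to $T_0$, define $\ell(A_0), \ell(B_0), \ell(C_0)$ to be the unique triple in $\cL_0$ with $\D \ell(A_0)\ell(B_0)\ell(C_0) \in \cS^{h_{e_0}\h}$ realizing the prescribed $\l$-lengths; this combines Proposition \ref{orbit} with Lemma \ref{rst}, and the resulting placement is unique modulo the $\SL(1|2)$-freedom recorded by Proposition \ref{stab}. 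For the inductive step, suppose $\D CDA$ is adjacent to an already-lifted $\D ABC$ across an edge $e$ and carries odd parameters $\s$ and ratio $h_e\inv$; the lifted $\ell(A), \ell(B), \ell(C)$ together with Proposition \ref{ratiounique} produce an admissible $g \in \SL(1|2)$ placing $\dia ABCD$ into $\cS_{h_e}^{\h, \til{\s}}$, where $\til{\s} \in \{\s, \s^{op}\}$ is selected by $\d_{\w_\sigma, \w_\iota}(t)$ according to \eqref{liftingcase}. Inverting this $g$ fixes $\ell(D)$. Running this step along each edge of $\til{\t}$ produces $\ell$ on all of $\til{\D}_\oo$.

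Independence on the representative in $\til{C}(F,\D)$ reduces to a direct computation showing that the quantities entering the standard-position placements are invariant under vertex rescaling. The odd invariant of the standard position, $h_e \h = (h_e \h_1, h_e\inv \h_2)$, is fixed under the substitution $(h_e, \h_1, \h_2) \mapsto (\a h_e, \a\inv \h_1, \a \h_2)$, and the same is true for the analogous invariants attached to the other two edges of the triangle. Consequently, the inductive step produces the same standard-position choice before and after rescaling, so $\ell$ descends to the quotient $C(F,\D)$. Uniqueness up to post-composition by $\SL(1|2)$ then follows because every inductive choice is forced by the admissibility conditions, leaving only the $\SL(1|2)$-freedom in the base placement.

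The main obstacle I anticipate is verifying that the bookkeeping of the two independent roles of the spin structures $\w_\sigma$ and $\w_\iota$ --- one governing sign flips of odd invariants, the other their ordering --- correctly combines into the single admissibility condition \eqref{liftingcase} indexed by $\d_{\w_\sigma, \w_\iota}$. Concretely, one must track how the upside-down transformation $\Up_{h_e}^\chi$, the sign action $Z_{-1}$, and the involution $\Psi$ from Lemma \ref{Stand} compose along successive crossings of edges in $\til{\t}$, and show that the admissibility choice is resolved in a manner consistent with the spin-graph evolution rule of Figure \ref{flipgraph}. This is the crux of why $\w_\iota$ controls the fermion ordering while $\w_\sigma$ separately governs the fermion signs, matching the informal discussion preceding the theorem statement.
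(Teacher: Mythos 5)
Your proposal follows essentially the same route as the paper's proof: root the lift at a base triangle placed via Proposition \ref{orbit} and Lemma \ref{rst}, propagate outward along the tree $\til{\t}$ using Proposition \ref{ratiounique} (equivalently, compositions of prime and upside-down transformations) with the choice among $\pm\s$ and $\pm\s^{op}$ dictated by the edge orientation and by $\d_{\w_\sigma,\w_\iota}$, and verify invariance under vertex rescaling via the invariance of $h_e\h$ --- the paper phrases the same computation as $P_{\a h_a,\a h_e}^{\a\inv\h,\pm}=P_{h_a,h_e}^{\h,\pm}$ together with telescoping $Z_\a$ factors, and additionally notes that rescaling at the base triangle changes the lift only by post-composition with $Z_\a$. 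The ``obstacle'' you flag at the end dissolves: since $\til{\t}$ is a tree, condition \eqref{liftingcase} is simply imposed by definition each time a new vertex is placed (with Lemma \ref{Stand} supplying the reversed-orientation variant \eqref{liftingcase2}), so no compatibility of $\Up$, $Z_{-1}$ and $\Psi$ along closed loops needs to be checked in this theorem --- that bookkeeping only becomes substantive for the holonomy representation in Theorem \ref{main_group}.
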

\begin{Thm}\label{main_group} Fixing a coordinate vector $\vec[c]\in C(F,\D)$ and given a lift $\ell:=\ell_{\w_{\sigma},\w_{\iota}}:\til{\D}_\oo \to \cL_0$ constructed as in Theorem \ref{main_lift}, there is a representation $\what{\rho}:\pi_1:=\pi_1(F)\to \SL(1|2)$ uniquely determined up to conjugacy by an element of $\SL(1|2)$ such that
\begin{itemize}\item[(1)] $\ell$ is $\pi_1$-equivariant, i.e., $\what{\rho}(\c)(\ell(a))=\ell(\c(a))$ for each $\c\in\pi_1$ and $a\in \til{\D}_\oo$;\\
\item[(2)] $\what{\rho}$ is a super Fuchsian representation, i.e., the natural projection $$\rho:\pi_1\xto{\what{\rho}} \SL(1|2) \to SL(2,\R)\to PSL(2,\R)$$ is a Fuchsian representation;\\
\item[(3)] the lift $\til{\rho}:\pi_1\xto{\what{\rho}} \SL(1|2) \to SL(2,\R)$ of $\rho$ does not depend on $\w_{\iota}$, and the space of all such lifts is in one-to-one correspondence with the spin structures $[\w_{\sigma}]\in\cO(\t)$.
\end{itemize}
\end{Thm}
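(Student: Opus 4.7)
The plan is to construct $\what\rho$ by the standard equivariance/uniqueness trick and then analyze its projections. First, for each deck transformation $\c\in\pi_1(F)$ acting on $\til F$, the composition $\ell\circ\c:\til\D_\oo\to\cL_0$ is again a lift of $\til\D$ that satisfies the admissibility conditions of Theorem \ref{main_lift}: indeed, $\c$ permutes the lifted triangles and preserves all edge/triangle data $(e,\h_1,\h_2,h_e)$ since these are defined downstairs on $F$, and it preserves the fatgraph orientation classes $[\w_\sigma],[\w_\iota]$ and hence the case-split \eqref{liftingcase}. By the uniqueness clause of Theorem \ref{main_lift}, there exists $\what\rho(\c)\in \SL(1|2)$ with $\ell\circ\c=\what\rho(\c)\cdot\ell$, uniquely determined modulo the common stabilizer of $\ell(\til\D_\oo)$; since $\ell$ hits three linearly independent special light-cone vectors, this stabilizer is trivial (using Proposition \ref{stab} applied on a triangle together with a fourth vertex, which breaks the residual $\cZ$-action). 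The assignment $\c\mapsto\what\rho(\c)$ is then a homomorphism by uniqueness: both $\what\rho(\c_1\c_2)\cdot\ell$ and $\what\rho(\c_1)\what\rho(\c_2)\cdot\ell$ equal $\ell\circ(\c_1\c_2)$. Changing the base lift $\ell$ by post-composition with $g\in\SL(1|2)$ replaces $\what\rho$ by $g\what\rho g\inv$, giving the asserted conjugacy ambiguity; this proves part (1).

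For part (2), I would project $\what\rho$ through $\SL(1|2)\to SL(2,\R)\to PSL(2,\R)$ and argue that the resulting $\rho$ coincides with the classical Fuchsian representation of $\cite{P1}$ associated to the bosonic $\l$-lengths $\{e\}$. Concretely, setting all odd coordinates $\h_1,\h_2$ to zero, the construction of the lift in Theorem \ref{main_lift} specializes to the purely even one: the prime transformations reduce to the classical rotation by $J$ composed with diagonal rescaling, and the quadrilateral relations reduce to the Ptolemy relation $ef=ac+bd$. Since the bosonic $\l$-length data $\{e\}$ together with the classical recursive lift reconstruct Penner's decorated Fuchsian representation (via horocycles in $\R^{2,1}$), the projection $\rho$ is Fuchsian. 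Furthermore $\rho$ is independent of both $\w_\sigma$ and $\w_\iota$ because the sign ambiguities $\h_i\leftrightarrow-\h_i$ and the order involution $\h^{op}$ act trivially on the even part of $\SL(1|2)$, so they descend to the identity in $PSL(2,\R)$.

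For part (3), I would track exactly how the two spin structures enter the admissible transformations. Examining the list of admissible generators (prime transformations $P_{h_a,h_b}^\h$, upside-down transformations $\Up^\chi_{h_e}$, the central element $Z_{-1}=J^2$, and the involution $\Psi$) shows that the sign of the $SL(2,\R)$-representative is governed solely by the count of $J$-rotations needed to traverse a loop, which by construction is controlled by the orientation class $\w_\sigma$ via the spin-evolution rule of Figure \ref{flipgraph}; by contrast the switch $\s\leftrightarrow\s^{op}$ triggered by $\d_{\w_\sigma,\w_\iota}$ in \eqref{liftingcase} is realized by $\Psi$, which acts trivially on the $SL(2,\R)$ quotient. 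Hence $\til\rho$ depends only on $\w_\sigma$. To identify the set of lifts with $\cO(\t)$, define the quadratic form $q_{\til\rho}([\c]):=\sign\operatorname{Tr}(\til\rho(\c))$ as in \eqref{quadform}; verify it extends to $H_1(F;\Z_2)$ via the intersection form by computing on a pair of simple cycles meeting in one point (the computation reduces to the classical one in $SL(2,\R)$). The map $\til\rho\mapsto q_{\til\rho}$ is an $H^1$-equivariant bijection onto $\cQ(F)$, which by Proposition \ref{isospin} is in turn identified with $\cO(\t)$, completing (3).

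The main obstacle I expect is verifying the homomorphism property cleanly, more precisely ruling out a sign (or $\Psi$) ambiguity in $\what\rho(\c_1\c_2)$ versus $\what\rho(\c_1)\what\rho(\c_2)$. This comes down to confirming that the stabilizer in $\SL(1|2)$ of a full $\pi_1$-orbit of $\ell$ is truly trivial after accounting for the odd rescaling freedom $(\h_1,\h_2)\sim(a\h_1,a\inv\h_2)$: one needs enough linearly independent light-cone images, together with non-degeneracy of the odd data on at least one triangle, to kill the residual $\cZ$-action from Proposition \ref{stab}. A secondary subtlety is checking that the two sign/involution layers $\w_\sigma$ and $\w_\iota$ really do decouple in the way required by (3); this amounts to a careful bookkeeping of which admissible generators carry the $J^2$-sign and which carry the $\Psi$-twist, and showing that loops in $\pi_1$ accumulate these two contributions independently. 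Once this separation is in place, the identification with spin structures follows from \cite{PZ} via Proposition \ref{isospin}.
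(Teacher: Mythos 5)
Your overall architecture for parts (2) and (3) matches the paper's, but the core of part (1) — defining $\what\rho$ and establishing the homomorphism property ``by uniqueness'' — rests on a claim that is false: the common stabilizer of $\ell(\til\D_\oo)$ in $\SL(1|2)$ is \emph{not} trivial. By the Corollary following Proposition \ref{ratiounique}, the stabilizer of a standard quadruple consists of the elements $Z_c$ with $c-1$ annihilating the odd invariants, and since the odd coordinates are pulled back from $F$ there are only finitely many of them; taking $c-1$ to be (for instance) the product of all of them gives a nontrivial $Z_c$ fixing every point in the image of $\ell$. No amount of ``linearly independent light-cone images'' or ``non-degeneracy of the odd data'' removes this, because the obstruction is the nilpotency of the Grassmann algebra, not a geometric degeneracy. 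Consequently the equivariance equation $\ell\circ\c=\what\rho(\c)\cdot\ell$ determines $\what\rho(\c)$ only up to this stabilizer subgroup, the homomorphism property does not follow from uniqueness, and — worse for the theorem as stated — two different choices of coset representatives on generators are in general not conjugate, so the ``uniquely determined up to conjugacy'' clause would fail for your construction.

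The paper avoids all of this by exploiting that $s\geq 1$, so $\pi_1(F)$ is free: it fixes a fundamental domain $\bD$, pairs its frontier edges by free generators $\c_i$, and \emph{defines} $\what\rho(\c_i):={g'}\inv g$ where $g,g'$ are the explicit admissible transformations (compositions of prime and upside-down transformations, signs and $\Psi$'s read off from the fatgraph path and the two orientations). Freeness means there are no relations to verify, and the explicit formula pins down $\what\rho$ canonically, with changes of base triangle or fundamental domain producing exactly a global conjugation. If you want to keep your equivariance-based setup you must still invoke freeness to extend arbitrary coset representatives from generators, and you must then replace ``uniqueness of $\what\rho$'' by the paper's canonical choice to get the conjugacy statement. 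Your treatment of (2) (bosonic reduction is Penner's Fuchsian representation preserving the tessellation) and of the $\w_\sigma$/$\w_\iota$ decoupling in (3) (the projection kills $\Psi$ while the $Z_{-1}$'s carry the sign) agrees with the paper; for the bijection with $\cO(\t)$ the paper argues directly with fatgraph reflections changing an even number of edge orientations along every cycle, whereas your route through quadratic forms and Proposition \ref{isospin} is viable but is really the content of the subsequent Corollary rather than of this proof.
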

\begin{Def} The space of $\SL(1|2)$-orbits of lifts $\ell:\til{\D}_\oo\to \cL_0$ that is $\pi_1$-equivariant for some super Fuchsian representation $\what{\rho}:\pi_1\to \SL(1|2)$ is called the decorated super Teichm\"uller space $S\til{T}(F)$.
\end{Def}

\begin{proof1} The construction is similar to the one presented in \cite{PZ} but with some modifications. In particular our construction of the lift here will directly incorporate the spin structures by determining explicitly the signs and orders of the odd coordinates $\h=(\h_1,\h_2)$ for each triangle.

First we note that by Lemma \ref{Stand}, if $t$ has the opposite orientation instead, then \eqref{liftingcase} implies
\Eq{\label{liftingcase2}
t:\D_{ABC}\to \D_{CDA}\=>\case{g\cdot \dia ABCD\in \cS_{h_e}^{\h,-\s}&  \mbox{ if }\d_{\w_{\sigma},\w_{\iota}}(t)=1,\\g\cdot \dia ABCD\in \cS_{h_e}^{\h,-\s^{op}}& \mbox{ if }\d_{\w_{\sigma},\w_{\iota}}(t)=-1}
}
for some admissible $g\in \SL(1|2)$.

To begin our construction, we first choose a distinguished triangle-edge pair $(T,e)$ in $\til{\D}_\oo$ with specified $\l$-lengths, odd coordinates $\h$, and ratio $h_e$. Then according to Lemma \ref{rst} there is a unique triangle $\D ABC\subset \cL_0$ realizing the $\l$-lengths and satisfying $\D ABC\in \cS^{h_e\h}$ with $BC$ the image of the distinguished edge. We define $\D ABC$ to be the lift of $T$ and call $T$ the \emph{base triangle} of our lift. 

Consider the adjacent triangle $T'$ opposite to $e$ with odd coordinates $\s$. Then we define $\D CDA$ to be the lift of $T'$, where $D\in \cL_0$ is the unique point by Proposition \ref{sigma} that realizes the $\l$-lengths, and satisfies \eqref{liftingcase} or \eqref{liftingcase2} with $g=1$. We define $\D CDA$ to be the image of $T'$.

Now consider the lift $\D AD'B$ of the adjacent triangle of $T$ with odd coordinates $\s'$ and ratio $h_a$ at the edge $AB$ of $\D ABC$. Applying $P_{h_e,h_a}^\h$, we bring $\D BCA$ to standard position. Then the point $D'\in \cL_0$ is uniquely determined by $\dia BCAD'\in \cS_{h_a}^{\h,\til{\s'}}$ where $\til{\s'}=\pm \s'$ or $\pm {\s'}^{op}$ according to \eqref{liftingcase} or \eqref{liftingcase2}. If we now apply the upside-down transformation $\Up_{h_a}^{\mathcal{X}'}$ where $\mathcal{X}'$ is the cross-ratio for $\dia BCAD'$, and if necessary the sign change $J^2$ and involution $\Psi$, we bring $\D AD'B\in \cS^{h_a\inv\s'}$ and we can continue our inductive process of lifting other triangles. 

More precisely, fixing $T$ as the base triangle of our lift, the lift $\D A'B'C'$ of a triangle $T'$ with specified $\l$-lengths, odd coordinates $\h'$ and ratio $h'$ across $A'C'$ is defined to be $g\inv\cdot \D_{\h'}$, where $\D_{\h'}\in \cS^{h'\h'}$ is determined by the unique ordered triple in $\cL_0$ with specified fermions and $\l$-lengths of $T'$, and $g\in \SL(1|2)$ is the unique admissible transformation constructed above bringing $\D A'B'C'$ to standard position along the path of $\til{\t}$ joining $T$ and $T'$ in the universal cover.

We completely define the lift $\ell:\til{\D}_\oo\to \cL_0$ in this manner, which satisfies the statement of the theorem
by construction. Choosing a different base triangle and edge pair amounts to an overall action by $g\in \SL(1|2)$ on $\cL_0$, where $g$ is the unique admissible transformation that brings the image of this new triangle in the original lift to standard position, i.e., $\ell_{new} = g\cdot \ell$. Hence the lift $\ell$ is uniquely defined up to post-composition with an $\SL(1|2)$ element.

Finally, under a vertex rescaling $\a$ of the coordinate at some triangle $T_\h$ that is not the base triangle, the admissible transformation along the portion of the path $T_{\s'} \to T_\h \to T_{\s''}$ that passes through $T_\h$ changes as (cf. \eqref{leftright})
\Eqn{
g_\a&=...\circ P_{\a\inv h_e\inv, h_t}^{\s'',\pm} \circ \Up_{\a h_e}^{\mathcal{X}} \circ P_{\a h_a, \a h_e}^{\a\inv\h,\pm} \circ \Up_{\a\inv  h_a\inv}^{\mathcal{X}'} \circ P_{h_s,\a\inv h_a\inv}^{\s',\pm}\circ...\\
&=...\circ P_{h_e\inv, h_t}^{\s'',\pm}  Z_\a\inv\circ Z_\a\Up_{h_e}^{\mathcal{X}} \circ P_{\a h_a, \a h_e}^{\a\inv\h,\pm} \circ \Up_{h_a\inv}^{\mathcal{X}'}Z_{\a}\inv \circ Z_\a P_{h_s, h_a\inv}^{\s',\pm}\circ...\\
&=...\circ P_{h_e\inv, h_t}^{\s'',\pm} \circ \Up_{h_e}^{\mathcal{X}} \circ P_{h_a, h_e}^{\h,\pm} \circ \Up_{h_a}^{\mathcal{X}'}\circ P_{h_s, h_a}^{\s',\pm}\circ...\\
&=g,
}
where we used the fact that $P_{\a h_a, \a h_e}^{\a\inv\h,\pm}=P_{h_a, h_e}^{\h,\pm}$. Hence the definition of our lift does not change. The case with sign changes and involutions are similar. However if $T_\h$ is our base triangle, then the lift differs by an overall scaling of $Z_{\a}\in \SL(1|2)$.
\end{proof1}
\qed

\begin{proof2} The construction is a simplified version of \cite{PZ} without the need to modify the signs of $\what{\rho}$ for the spin structures since they are already incorporated in the construction of the lift.

As in \cite{PZ}, we fix a base triangle $T$ for our lift and choose a connected fundamental domain $\bD\subset \til{F}$ for the action of $\pi_1$ that contains $T$. Then $\bD$ is a $4g+2s$-sided ideal polygon, and in particular, the frontier edges of $\bD$ in $\til{F}$ arise in pairs $c, c'$ together with an abstract identification $c'=\c(c)$ induced by some $\c\in\pi_1$. We let $c_i'=\c_i(c_i)$ enumerate the collection of these edges pairing, where $i=1,..., 2g+s$. Then it is known that $\pi_1$ is the free group generated by these $2g+s$ elements $\c_i$.

To determine the image of $\what{\rho}(\c_i)\in \SL(1|2)$, let $\D ABC$ and $\D A'B'C'$ be the lift of the unique pair of triangles such that $BC=\ell(c_i)$, $B'C'=\ell(c_i')$, $\ell\inv(ABC)\sub \bD,$ and $\ell\inv(A'B'C')\not\subset \bD$. Then by definition of $\ell$, there are unique admissible transformations $g,g'$ bringing standard position from $\D ABC$ and $\D A'B'C'$ respectively to $\ell(T)$. We define 
\Eq{
\what{\rho}(\c_i):={g'}\inv g\in \SL(1|2).
} Since $\pi_1$ is a free group this defines our representation $\what{\rho}$. 

More explicitly, let $\c_i$ be homotopically represented by a path in $\t$. Then $\what{\rho}(\c_i)$ is a composition of the form
\Eq{\label{explicit}
\what{\rho}(\c_i)=\prod_k Z_{\pm1}\circ \Psi_\pm\circ \Up_{h_k'}^{\mathcal{X}_k} \circ P_{h_k,h_k'}^{\h_k,\pm}\in \SL(1|2)
}
that brings standard position from $\D ABC$ to $\D A'B'C'$ moving along the edge of $\t$, where $P_{h_k,h_k'}^{\h_k,\pm}$ is the prime transformation corresponding to turning left $(+)$ or right $(-)$ at the vertex of $\t$ with starting ratio $h_k$ and ending ratio $h_k'$. The signs $Z_{\pm1}$ and involutions $\Psi_\pm\in\{Id, \Psi\}$ correspond to the orientations $\w_{\sigma}$ and $\w_{\iota}$ respectively according to the rule \eqref{liftingcase} and \eqref{liftingcase2}.

The same argument as in \cite{PZ} shows that choosing a different initial triangle defining our lift $\ell$, and choosing a different fundamental domain $\bD'$ for our construction, amounts again to an overall conjugation of $\what{\rho}$ by an admissible element of $\SL(1|2)$.

(1) follows directly from the definition of the construction of $\what{\rho}$. By an argument in \cite{P1,P2}, (2) follows from the fact that the bosonic reduction $\rho(\pi_1)\subset PSL(2,\R)$ by construction leaves invariant the tessellation $\til{\D}\subset \til{F}\simeq \mathbb{D}$.

Finally to show (3), recall that the projection $\SL(1|2)\to SL(2,\R)$ maps $\Psi\mapsto 1$, hence $\til{\rho}$ does not depend on $\w_{\iota}$. Now we note that if $\w$ and $\w'$ are related by a fatgraph reflection at the vertex $v$, then the orientation of either zero or two edges of $\t$ are flipped along each cycle. Since the projection $Z_{-1}\to(-1)$ to $SL(2,\R)$ commutes with every operators in \eqref{explicit}, the projection $\til{\rho}(\c_i)\in SL(2,\R)$ remains unchanged under fatgraph reflection. In particular if $[\w]$ and $[\w']$ defines the same lift $\til{\rho}$ to $SL(2,\R)$, then all cycles of $\pi_1$ differ in orientations at even number of edges of $\t$, which implies $[\w]=[\w']\in\cO(\t)$, thus proving the one-to-one correspondence between lifts $\til{\rho}$ and spin structures.
\end{proof2}\qed
\begin{Cor} Given an oriented simple cycle $\c\in\pi_1(F)$ homotopic to a path on the fatgraph $\t$ with orientation class $[\w]$, the quadratic form corresponding to the lift $\til{\rho}:\pi_1\to SL(2,\R)$ according to \eqref{quadform} is given by
\Eq{\label{ourquad}
q([\c])=(-1)^{L_\c}(-1)^{N_\c}=(-1)^{R_\c}(-1)^{\over[N]_\c},}
where $L_\c$ (resp. $R_\c$) is the number of left (resp. right) turns of $\c$ on the fatgraph $\t$, and $N_\c$ (resp. $\over[N]_\c$) is the number of edges of $\t$ such that $\c$ and $\w$ have the same (resp. opposite) orientation.
Moreover under the flip transformation, the spin structure changes according to the rule expressed in the Figure \ref{flipgraph}.
\end{Cor}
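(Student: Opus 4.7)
The strategy has two parts, matching the two assertions. For the trace formula, I would substitute the explicit expression \eqref{explicit} for $\what{\rho}(\c)$ into the projection $\SL(1|2) \to SL(2,\R)$ and read off the sign of the trace of the resulting $2\times 2$ matrix. Under this projection the involutions $\Psi_\pm$ become the identity, while each $Z_{\pm 1}$ factor projects to $\pm I_2$. Hence
\[
\sign Tr(\til{\rho}(\c)) = \left(\prod_k \eps_k\right)\cdot \sign Tr\bigl(\til{\rho}_0(\c)\bigr),
\]
where $\eps_k\in\{\pm 1\}$ records the $Z_{\pm 1}$ factor at the $k$-th step and $\til{\rho}_0$ is obtained by suppressing them. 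By the construction of $\ell$ and the rules \eqref{liftingcase}--\eqref{liftingcase2}, the sign $\eps_k=-1$ appears exactly at those edges where $\c$ and $\w_\sigma$ disagree in orientation, giving $\prod_k\eps_k=(-1)^{N_\c}$.

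Next I would compute $\sign Tr\til{\rho}_0(\c)$ in terms of $L_\c$. Because consecutive edges of $\c$ share the cross-ratio and ratio data that enter $\Up$, the diagonal factors $Z_{h_k}$ and $D_{\sqrt{\chi_k}}$ in adjacent steps telescope, leaving a product with one factor per vertex of $\c$ of the bosonic projection of ${P'}^{\pm 1}$. A direct $SL(2,\R)$ computation shows the projection of $P'$ is a generator of order $6$ whose cube is $-I_2$ (consistent with $(P')^3=Z_{1-\h_1\h_2/2}$ projecting to the center), and the standard bosonic analysis, identical to the $\cN=0$ treatment in \cite{P1} and the bosonic reduction in \cite{PZ}, yields $\sign Tr\til{\rho}_0(\c)=(-1)^{L_\c}$. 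Combining the two paragraphs gives $q([\c])=(-1)^{L_\c}(-1)^{N_\c}$; the equivalent expression follows from $L_\c+R_\c=N_\c+\over[N]_\c=|\c|$.

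For the moreover part, I would verify that the rule in Figure \ref{flipgraph} is exactly the one on orientations that preserves the quadratic form $q$. Since $q$ is additive on $H_1(F;\Z_2)$, it suffices to check the local effect on cycles passing through the flipped quadrilateral. There are finitely many topological types of intersections of a cycle with the four boundary edges $\e_1,\e_2,\e_3,\e_4$; for each case one tabulates $L_\c+N_\c\pmod 2$ before and after the flip, using that the diagonal reverses, the upper-right branch $\e_3$ flips sign, and the other three remain. A short case analysis confirms invariance.

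The main technical obstacle is the telescoping in the second paragraph: tracking how the diagonal rescalings $Z_{h_k}$ and $D_{\sqrt{\chi_k}}$ cancel against the upside-down transformations $\Up_{h_k'}^{\chi_k}$ across a cycle requires careful bookkeeping of the shared ratio data on adjacent triangles. Once this is established, the sign analysis reduces to the classical bosonic case and the flip invariance expressed in Figure \ref{flipgraph} follows from the resulting combinatorial formula.
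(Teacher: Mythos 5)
Your route is the paper's: project the explicit word \eqref{explicit} to $SL(2,\R)$, separate the $Z_{\pm1}$ factors recording the spin data from the bosonic turning data, and treat the flip rule by the local case analysis of \cite{PZ}. Two steps need repair, though. First, the telescoping claim in your second paragraph is false as stated: the diagonal factors $D_{\sqrt{\chi_k}}$ carry the cross-ratios of \emph{distinct} quadrilaterals along the cycle and do not cancel (their product around $\c$ is essentially the bosonic monodromy, which need not be trivial). Fortunately nothing requires them to cancel, only to be positive. The paper's proof keeps them and packages each step into an explicit per-vertex $2\times 2$ matrix: a left turn gives an upper-triangular matrix with both diagonal entries negative, a right turn a lower-triangular matrix with both diagonal entries positive, and an induction shows that each left turn flips the sign of the trace while each right turn preserves it. You should carry out that induction (or cite it precisely from the $\cN=0$ analysis in \cite{P1}) rather than appeal to a cancellation that does not occur.

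Second, your sign bookkeeping is internally inconsistent: if $\eps_k=-1$ occurs exactly at the edges where $\c$ and $\w$ \emph{disagree}, then $\prod_k\eps_k=(-1)^{\overline{N}_\c}$, not $(-1)^{N_\c}$, and $(-1)^{L_\c}(-1)^{\overline{N}_\c}$ differs from the asserted formula by $(-1)^{|\c|}$, which is not identically $+1$. You must pin down from \eqref{liftingcase}--\eqref{liftingcase2} which relative orientation of the traversal against $\w_\sigma$ inserts the $Z_{-1}$; the statement requires the signs to be counted by the $N_\c$ edges where the orientations \emph{agree}. The ``moreover'' part of your argument coincides with the paper's, which likewise defers to the flip case analysis of \cite{PZ}.
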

\begin{proof} According to \eqref{quadform}, we need to calculate the sign of the trace of $\til{\rho}(\c)$. By the explicit expression \eqref{explicit}, under the projection to $SL(2,\R)$, the $Z_{h_k}$ elements for $h_k>0$ become $h_k I_{2\x2}$ hence do not affect the sign. Therefore the projection of the representation of $\til{\rho}(\c)$ is given by $N_\c$ number of signs $\veca{-1&0\\0&-1}$ and compositions of left and right turns:
\Eqn{
L_k&:=\veca{-1&1\\-1&0}\veca{0&1\\-1&0}\veca{a_k&0\\0&a_k\inv}=\veca{-a_k&-a_k\inv\\0&-a_k\inv},\\
R_k&:=\veca{-1&1\\-1&0}\inv \veca{0&1\\-1&0}\veca{a_k&0\\0&a_k\inv}=\veca{a_k&0\\a_k&a_k\inv},
}
where $a_k>0$. Then it can easily be shown by induction that multiplication by $L_k$ changes the sign of trace while $R_k$ does not, hence the trace depends on the number of left turns $L_\c$ made by $\c$. Finally letting $|\c|$ be the length of the cycle, we have $R_\c=|\c|-L_\c$ and $N_\c=|\c|-\over[N]_\c$, hence the second equation follows.

In order to prove the last part of the statement, one just needs to follow the same route as in \cite{PZ}, namely  
consider various pieces of curves running along the part of the fatgraph which is affected by the flip and compare the contributions to the quadratic form from those before and after the flip. 
\end{proof}
\begin{Rem} One can show that using the \emph{special Kasteleyn orientation} and the \emph{canonical dimer} described in \cite{PZ}, the formula in \cite{CR} produces a quadratic form that differs from \eqref{ourquad} by an overall change of signs for each simple cycle. Hence if we modify the definition of $\what{\rho}(\c_i)$ by composition with $Z_{-1}$, our construction will agree with the isomorphisms in Proposition \ref{isospin} described previously in the literature.
\end{Rem}
\begin{Thm} \label{compn} The components of $S\til{T}(F)$ are determined by two spin structures $s_{\sigma},s_{\iota}\in \cO(\t)$, thus making the total number of components $2^{2(2g+s-1)}$. For fixed representatives of the spin structures, $C(F,\D)$ provides an analytic homeomorphism from each component to $\R_+^{8g+4s-7|8g+4s-8}/\Z_2\x\Z_2$. 
\end{Thm}
\begin{proof} 
By the uniqueness of the basic calculation from Lemma \ref{rst} and Proposition \ref{sigma}, given a lift $\ell\in S\til{T}(F)$ for fixed representatives of the spin structures, the coordinates are uniquely determined since they are defined intrinsically in $\cL_0$.

If $\w_\sigma$ and $\w_\iota$ are representatives of $s_{\sigma}$ (resp.\ $s_{\iota}$) that differ by a fatgraph reflection, then from the construction of the lift $\ell$ we see that changing the coordinates from $C(F,\D)_{\w_\sigma}$ to $C(F,\D)_{\w_\iota}$ by $\h\to -\h$ (resp. $\h\to \h^{op}$) of the corresponding vertex defines the same lift. Hence the components of $S\til{T}(F)$ depends only on the equivalence classes $s_{\sigma}$ and $s_{\iota}$.
\end{proof}

\begin{Rem}
Removal of the decoration, i.e., the construction of the coordinates on the $\cN=2$ super-Teichm\"uller space follows the bosonic case. From the  construction of the lift, we obtain that the representation $\what{\rho}$ of $\widetilde{SL}(1|2)$ depends on $\lambda$-lengths only through cross-ratios $\mathcal{X}$. Therefore, passing from $\lambda$-lengths to cross-ratios for every edge, we obtain a coordinate system for $ST(F)$, either as a subspace determined by one constraint for each puncture of $F$ or more generally for $F$ as a surface with holes.
\end{Rem}
\begin{Rem}
One can obtain the coordinates on the (decorated) $\cN=1$ Teichm\"uller space by considering lifts such that $h_{e}=1$ for every edge $e$ and $\theta_1=\theta_2$ for every triangle. That precisely corresponds to the reduction $\SL(1|2)\to OSp(1|2)$. 
\end{Rem}


\section{Ptolemy transform}\label{sec:ptolemy}
In this section, we describe the Ptolemy transform on the coordinate system $\til{C}(F,\D)$ of $S\til{T}(F)$.  In particular, our result gives the mapping class group action on the super Teichm\"uller space. More precisely, 

\begin{Def} If $\D$ and $\D'$ differ by a flip of the diagonal in one quadrilateral, and the underlying fatgraphs $\w_{\sigma},\w_{\iota}$ evolve according to the rule given by Figure \ref{flipgraph}, then the local coordinate transformation $\til{C}(F,\D)\to \til{C}(F,\D')$ around the quadrilateral:
\Eq{
&(...,a,b,c,d,e, h_a, h_b, h_c, h_d, h_e,... |..., \h_1,\h_2,\s_1,\s_2, ...) \mapsto\\
&\tab  (...,a,b,c,d,f, h_a', h_b', h_c', h_d', h_f,... |..., \mu_1,\mu_2,\nu_1,\nu_2, ...)
}
(and all other coordinates fixed) is called a \emph{Ptolemy transform} if the two coordinate vectors produce the same lift $\ell\in S\til{T}(F)$.
\end{Def}

Since the definition of flip depends only on the equivalence classes $C(F,\D)$ of the coordinate vectors, the Ptolemy transform descends to a coordinate transformation $C(F,\D)\to C(F,\D')$. Since all the ratios under the projection from $\til{C}(F,\D)\to C(F,\D)$ will be rescaled, the transformation is no longer local. 

As we shall see, the calculations for the $\l$-lengths are similar to \cite{PZ}. However, the calculation of the odd coordinates and the ratios are significantly different due to the rescaling properties.


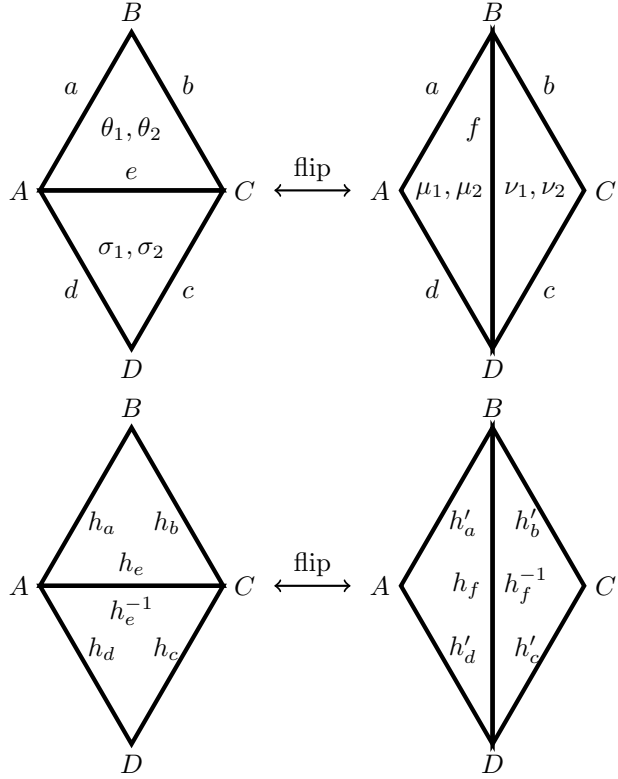
\begin{figure}[h!]
\centering
\begin{tikzpicture}[scale=0.8, baseline,ultra thick]
\draw (0,0)--(3,0)--(60:3)--cycle;
\draw (0,0)--(3,0)--(-60:3)--cycle;
\draw node[above] at (70:1.5){$a$};
\draw node[above] at (30:2.8){$b$};
\draw node[below] at (-30:2.8){$c$};
\draw node[below=-0.1] at (-70:1.5){$d$};
\draw node[above] at (1.5,0){$e$};
\draw node[left] at (0,0) {$A$};
\draw node[above] at (60:3) {$B$};
\draw node[right] at (3,0) {$C$};
\draw node[below] at (-60:3) {$D$};
\draw node at (1.5,1){$\h_1,\h_2$};
\draw node at (1.5,-1){$\s_1,\s_2$};
\end{tikzpicture}
\begin{tikzpicture}[baseline]
\draw[<->, thick](0,0)--(1,0);
\node[above]  at (0.5,0) {flip};
\end{tikzpicture}
\begin{tikzpicture}[scale=0.8, baseline,ultra thick]
\draw (0,0)--(60:3)--(-60:3)--cycle;
\draw (3,0)--(60:3)--(-60:3)--cycle;
\draw node[above] at (70:1.5){$a$};
\draw node[above] at (30:2.8){$b$};
\draw node[below] at (-30:2.8){$c$};
\draw node[below=-0.1] at (-70:1.5){$d$};
\draw node[left] at (1.5,1){$f$};
\draw node[left] at (0,0) {$A$};
\draw node[above] at (60:3) {$B$};
\draw node[right] at (3,0) {$C$};
\draw node[below] at (-60:3) {$D$};
\draw node at (0.8,0){$\mu_1,\mu_2$};
\draw node at (2.2,0){$\nu_1,\nu_2$};
\end{tikzpicture}\\
\begin{tikzpicture}[scale=0.8, baseline,ultra thick]
\draw (0,0)--(3,0)--(60:3)--cycle;
\draw (0,0)--(3,0)--(-60:3)--cycle;
\draw node[right] (ha) at (60:1.2){$h_a$};
\draw node[right =0.2 of ha] {$h_b$};
\draw node[right] (hd) at (-60:1.2){$h_d$};
\draw node[right =0.2 of hd] {$h_c$};
\draw node[above] at (1.5,0){$h_e$};
\draw node[below] at (1.5,0){$h_e\inv$};
\draw node[left] at (0,0) {$A$};
\draw node[above] at (60:3) {$B$};
\draw node[right] at (3,0) {$C$};
\draw node[below] at (-60:3) {$D$};
\end{tikzpicture}
\begin{tikzpicture}[baseline]
\draw[<->, thick](0,0)--(1,0);
\node[above]  at (0.5,0) {flip};
\end{tikzpicture}
\begin{tikzpicture}[scale=0.8, baseline,ultra thick]
\draw (0,0)--(60:3)--(-60:3)--cycle;
\draw (3,0)--(60:3)--(-60:3)--cycle;
\draw node[right] (ha) at (60:1.2){$h_a'$};
\draw node[right =0.2 of ha] {$h_b'$};
\draw node[right] (hd) at (-60:1.2){$h_d'$};
\draw node[right =0.2 of hd] {$h_c'$};
\draw node[left] at (1.5,0){$h_f$};
\draw node[right] at (1.5,0){$h_f\inv$};
\draw node[left] at (0,0) {$A$};
\draw node[above] at (60:3) {$B$};
\draw node[right] at (3,0) {$C$};
\draw node[below] at (-60:3) {$D$};
\end{tikzpicture}\\
\caption{Ptolemy transformation of $\dia ABCD$}
\label{ptolemy}
\end{figure}

\subsection{Even Ptolemy transformations}\label{subsec:even}
Since the $\l$-lengths are invariant under the action of $\SL(1|2)$, let us assume the quadrilateral in question is in standard position.

\begin{Thm}\label{bosonicptolemy}Assume $\dia ABCD\in \cS_{h_e}^{\h,\s}$ labeled as in Figure \ref{ptolemy}, and also $f^2:=\<B,D\>$. Then we have the (bosonic) Ptolemy relations
\Eq{\label{ef}
ef&=(ac+bd)\left(1+\frac{h_e\inv\s_1\h_2}{2(\sqrt\mathcal{X}+\sqrt\mathcal{X}\inv)}+\frac{h_e\s_2\h_1}{2(\sqrt\mathcal{X}+\sqrt\mathcal{X}\inv)}\right).
}

The Ptolemy relation is invariant under the change of representatives $\h,\s$ and the corresponding $h_e$.
\end{Thm}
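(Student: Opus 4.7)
My plan is to compute $f^2 = \<B,D\>$ in closed form using the explicit standard representatives of $\dia ABCD \in \cS_{h_e}^{\h,\s}$ supplied by Lemma \ref{rst} and Corollary \ref{rstCor}, and then match the result with the square of the proposed right-hand side. Since all $\l$-lengths are $\SL(1|2)$-invariant, no generality is lost in fixing $A,B,C$ in standard position $\cS^{h_e\h}$ and $D$ as determined by $\s$; in particular, the odd components of $B$ carry the $h_e^{\pm 1}$ scaling inherited from $\D ABC\in \cS^{h_e\h}$ (e.g.\ $\xi_1^{+B} = t h_e \h_1$), whereas those of $D$ carry only $\sqrt\chi^{\pm 1}$-factors and are independent of $h_e$ (the $h_e$ of the definition is absorbed into $B$'s normalization).

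Plugging into the invariant pairing \eqref{pairing}, I would decompose $\<B,D\>$ into three homogeneous pieces by fermion degree: a purely bosonic part, a quadratic part from the four $\xi^{\pm B}\xi^{\mp D}$ summands, and a quartic part arising from $z^B z^D$. The bosonic piece is $\frac{t}{2}(x_1+x_2)+ty$, which using $(ac+bd) = \sqrt{abcd}(\sqrt\chi+\sqrt\chi\inv)$ from Lemma \ref{rst} collapses to $(ac+bd)^2/e^2$, i.e.\ the classical Ptolemy identity. In the quadratic piece, the four $\sqrt\chi$-contributions pair with the four $\sqrt\chi\inv$-contributions to produce the common factor $(\sqrt\chi+\sqrt\chi\inv)$, and the $h_e^{\pm 1}$ scaling of $B$'s fermions survives Grassmann reordering to yield $(abcd/e^2)(\sqrt\chi+\sqrt\chi\inv)(h_e\s_2\h_1+h_e\inv\s_1\h_2)$. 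The quartic piece $z^B z^D = (t\h_1\h_2/2)(\xi_1^+\xi_2^+/(2x_1))$ simplifies, using $\xi_1^+\xi_2^+ = 2\chi\inv(cd/e)^2\s_1\s_2$ and $x_1 = \sqrt{2}cd\chi\inv/e$, to exactly $(abcd/(2e^2))\h_1\h_2\s_1\s_2$.

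To close the argument, I would square the proposed right-hand side $(ac+bd)(1+K_1+K_2)$ with $K_1 = h_e\inv\s_1\h_2/(2(\sqrt\chi+\sqrt\chi\inv))$ and $K_2 = h_e\s_2\h_1/(2(\sqrt\chi+\sqrt\chi\inv))$. Grassmann nilpotency forces $K_i^2 = 0$, and careful reordering (using $\s_1\h_2\s_2\h_1 = \s_2\h_1\s_1\h_2 = \h_1\h_2\s_1\s_2$) gives $K_1 K_2 = K_2 K_1 = \h_1\h_2\s_1\s_2/(4(\sqrt\chi+\sqrt\chi\inv)^2)$, so $(1+K_1+K_2)^2 = 1+2(K_1+K_2)+2K_1 K_2$. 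Multiplying through by $(ac+bd)^2 = abcd(\sqrt\chi+\sqrt\chi\inv)^2$ cancels the denominators exactly, matching the three pieces of $e^2 f^2$ computed above. Taking positive square roots yields the identity.

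For the invariance claim, under a combined rescaling $\h\mapsto a\h$, $\s\mapsto b\s$ one checks that $h_e\mapsto (b/a)h_e$ preserves the defining conditions $\D ABC\in\cS^{h_e\h}$ and $\Up_{h_e}^\chi\cdot \D CDA\in \cS^{h_e\inv\s}$, and then both $h_e\inv\s_1\h_2$ and $h_e\s_2\h_1$ are manifestly invariant under this combined rescaling. The main obstacle is purely bookkeeping: tracking signs through several Grassmann reorderings in the quadratic and quartic terms, and confirming at the outset that $D$'s components are $h_e$-independent in the standard frame of $\D ABC$, so that $h_e$ enters the final identity only via the normalization of $B$'s fermions.
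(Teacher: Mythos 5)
Your proposal is correct and follows essentially the same route as the paper: compute $\<B,D\>$ in standard position via the invariant pairing, organize the result by fermion degree, convert to $\l$-lengths with Lemma \ref{rst} and Corollary \ref{rstCor}, and extract the square root using Grassmann nilpotency (you square the proposed right-hand side and match, while the paper Taylor-expands $\sqrt{1+R/(ac+bd)^2}$ using $R^3=0$ --- these are equivalent). Your careful point that $D$'s fermionic components are $h_e$-independent in the frame where $\D ABC\in\cS^{h_e\h}$, so that $h_e$ enters only through $B$'s fermions, correctly produces the powers $h_e^{\pm1}$ of the theorem statement, whereas the final display of the paper's own proof shows $h_e^{\pm2}$ --- an internal normalization slip in the paper (the statement, and the definition of $\cD$ used later, carry $h_e^{\pm1}$) that your version avoids.
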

\begin{proof} By definition we have the coordinates
\Eqn{
B&=t(1,1,1,\frac{\h_1\h_2}{2}|h_e\h_1,h_e\inv \h_2,-h_e\inv\h_2,h_e\h_1),\\
D&=(x_1,x_2,-y,z,\xi_1^+,\xi_2^+,\xi_1^-,\xi_2^-).
}
Writing all the variables in terms of $\l$-lengths, we have
\Eqn{
f^2=&\<B,D\>\\
=&_{\eqref{pairing}}t\left(\frac{1}{2}(x_1+x_2)+y+\frac{1}{2}(h_e\h_1\xi_1^--h_e\inv\h_2\xi_2^-+h_e\inv\h_2\xi_1^++h_e\h_1\xi_2^+)+\frac{z\h_1\h_2}{2}\right)\\
=&\frac{b^2d^2}{e^2}+\frac{a^2c^2}{e^2}+\frac{2abcd}{e^2}\\
&+\frac{abcd}{e^2}(h_e^2\sqrt\mathcal{X}\h_1\s_2-h_e^{-2}\sqrt\mathcal{X} \h_2\s_1-h_e^{-2}\sqrt\mathcal{X}\inv\h_2\s_1-h_e^2\sqrt\mathcal{X}\inv\h_1\s_2)+\frac{abcd}{2e^2}\s_1\s_2\h_1\h_2\\
&=\frac{(ac+bd)^2}{e^2}+\frac{abcd}{e^2}\left(h_e^{-2}(\sqrt\mathcal{X}+\sqrt\mathcal{X}\inv)\s_1\h_2+h_e^2(\sqrt\mathcal{X}+\sqrt\mathcal{X}\inv)\s_2\h_1+\half\s_1\s_2\h_1\h_2\right).
}
Hence
\Eqn{
e^2f^2=(ac+bd)^2+R,
}
where $$R=abcd\left(h_e^{-2}(\sqrt\mathcal{X}+\sqrt\mathcal{X}\inv)\s_1\h_2+h_e^2(\sqrt\mathcal{X}+\sqrt\mathcal{X}\inv)\s_2\h_1+\half\s_1\s_2\h_1\h_2\right).$$
Note that $$R^2=2(abcd)^2(\sqrt\mathcal{X}+\sqrt\mathcal{X}\inv)^2\s_1\s_2\h_1\h_2$$
and $R^n=0$ for $n\geq 3$. Also we have 
$$(ac+bd)^2 = (\sqrt\mathcal{X}+\sqrt\mathcal{X}\inv)^2(abcd).$$
Hence we have
\Eqn{
ef&=(ac+bd)\sqrt{1+\frac{R}{(ac+bd)^2}}\\
&=(ac+bd)\left(1+\frac{1}{2}\frac{R}{(ac+bd)^2}-\frac{1}{8}\frac{R^2}{(ac+bd)^4}\right)\\
&=(ac+bd)\left(1+\frac{h_e^{-2}(\sqrt\mathcal{X}+\sqrt\mathcal{X}\inv)\s_1\h_2+h_e^2(\sqrt\mathcal{X}+\sqrt\mathcal{X}\inv)\s_2\h_1+\half\s_1\s_2\h_1\h_2}{2(\sqrt\mathcal{X}+\sqrt\mathcal{X}\inv)^2}\right.\\
&\tab -\left.\frac{\s_1\s_2\h_1\h_2}{4(\sqrt\mathcal{X}+\sqrt\mathcal{X}\inv)^2}\right)\\
&=(ac+bd)\left(1+\frac{h_e^{-2}\s_1\h_2}{2(\sqrt\mathcal{X}+\sqrt\mathcal{X}\inv)}+\frac{h_e^2\s_2\h_1}{2(\sqrt\mathcal{X}+\sqrt\mathcal{X}\inv)}\right).
}

By Proposition \ref{ratiounique} for a different choice of representative $\h',\s'$, there exists $g'\in \cZ\sub \SL(1|2)$ such that $g'\cdot \dia ABCD \in \cS_{h_e'}^{\h',\s'}$. Hence formula \eqref{ef} remains invariant since the pairing is invariant under the $\cZ$-action.
\end{proof}


\subsection{Odd Ptolemy transformation}\label{subsec:odd}

Since the (equivalence classes of) odd invariants do not change under the $\SL(1|2)$ action, let us first calculate the Ptolemy action on fermions. Assume $\dia ABCD\in \cS_{h_e}^{\h,\s}$. We would like to bring $\dia DABC\in \cS_{h_f}^{\mu,\nu}$ by the $\SL(1|2)$ action and calculate the new odd parameters $\mu,\nu$ which comprise the odd Ptolemy transformation. On the other hand, with the new configuration we shall have new ratios $h_a', h_b', h_c', h_d'$ with the attaching 4 triangles outside $\dia DABC$. Recall that the lift is the same under vertex rescaling, so we have a freedom to rescale the fermions $\mu$ and $\nu$, which we shall do to fix the ratio $h_a'$ and $h_b'$ in the sequel that is defined up to certain constants according to Proposition \ref{ratiounique}.

Here we shall assume that no two sides of $\dia ABCD$ are identified with one other under the projection to the surface $\til{F}\to F$, so that the attaching triangles at the sides $a,b,c,d$ do not change under Ptolemy transformation of $\dia ABCD$. The special cases when sides are identified is treated in Appendix \ref{sec:specialPtolemy}.

We start with $\dia ABCD$ in standard position
\Eqn{
A&=r(0,1,0,0|0,0,0,0),\\
B&=t(1,1,1,\frac{\h_1\h_2}{2}|h_e\h_1,h_e\inv\h_2,-h_e\inv\h_2,h_e\h_1),\\
C&=s(1,0,0,0|0,0,0,0),\\
D&=(x_1,x_2,-y,z|\xi_1^+,\xi_2^+,\xi_1^-,\xi_2^-).
}
First using the prime transformation $P_{h_e,h_a}^\h$, we bring $\D BCA$ into standard position:
\Eqn{
A&=r(1,0,0,0|0,0,0,0),\\
B&=t(0,1,0,0|0,0,0,0),\\
C&=s(1,1,1,\frac{\h_1\h_2}{2}|h_a\h_1,h_a\inv\h_2,-h_a\inv\h_2,h_a\h_1),
}
while transforming $D$ by Lemma \ref{primeaction} into $({x_1}',{x_2}',{y}',{z}'|{\xi_1^+}',{\xi_2^+}',{\xi_1^-}',{\xi_2^-}')$,
where
\Eqn{
{x_2}'&=x_1,\\
{z}'&=z+\frac{1}{2}(h_e\inv\h_2\xi_1^+-h_e\h_1\xi_2^++\h_1\h_2 x_1),\\
{y}'&=y+\frac{1}{2}(h_e\inv\h_2\xi_1^++h_e\h_1\xi_2^+)+x_1,\\
{\xi_1^-}'&=c_\h h_a\inv(h_e\xi_2^+-\h_2x_1),\\
{\xi_2^-}'&=c_\h\inv h_a(-h_e\inv\xi_1^++\h_1x_1),
}
and ${x_1}',{\xi_1^+}'$, ${\xi_2^+}'$ can be recovered from \eqref{*}.

Now by a diagonal transform $D_a$ with $a=\sqrt[4]{\frac{x_2'}{x_1'}}$, we bring $\D BDA$ into standard position with odd parameters $(\til{\mu}_1, \til{\mu}_2)$ where
\Eqn{
\til{\mu}_1&=\frac{{\xi_2^-}'}{\sqrt{{x_2}'{y}'}}=\frac{h_a}{c_\h}\frac{-h_e\inv\xi_1^++\h_1x_1}{\sqrt{x_1(y+\frac{1}{2}(h_e\inv\h_2\xi_1^++h_e\h_1\xi_2^+)+x_1)}}.
}

Now rewriting using the cross-ratios from Lemma \ref{rst}, we find
\Eqn{x_1=y\mathcal{X}\inv,\tab \xi_1^+=-\sqrt\mathcal{X}\inv y \s_1,\tab \xi_2^+=-\sqrt\mathcal{X}\inv y \s_2}
 hence
\Eqn{
\til{\mu}_1&=\frac{h_a}{c_\h}\frac{h_e\inv\sqrt\mathcal{X}\inv y \s_1+\mathcal{X}\inv y\h_1}{\sqrt\mathcal{X}\inv\sqrt{y}\sqrt{y-\frac{1}{2}(h_e\inv\sqrt\mathcal{X}\inv y\h_2\s_1+h_e\sqrt\mathcal{X}\inv y\h_1\s_2)+\mathcal{X}\inv y}}\\
&=\frac{h_a}{h_ec_\h}\frac{h_e\h_1+\sqrt\mathcal{X} \s_1}{\sqrt{1+\frac{\sqrt\mathcal{X}}{2}(h_e\inv\s_1\h_2+h_e\s_2\h_1)+\mathcal{X}}}.
}
Similarly we have
\Eqn{
\til{\mu}_2=\frac{h_ec_\h}{h_a}\frac{ h_e\inv\h_2+\sqrt\mathcal{X} \s_2}{\sqrt{1+\frac{\sqrt\mathcal{X}}{2}(h_e\inv\s_1\h_2+h_e\s_2\h_1)+\mathcal{X}}}.
}

Let us denote the denominator by
\Eq{\label{cD}
\cD:=\sqrt{1+\frac{\sqrt\mathcal{X}}{2}(h_e\inv\s_1\h_2+h_e\s_2\h_1)+\mathcal{X}}}
 for simplicity.

\begin{Def}
we shall fix the choice of the odd parameters $\mu_1,\mu_2$ of the triangle $\D ABD$ to be
\Eq{\label{muABD}
\mu_1=\frac{h_e\h_1+\sqrt\mathcal{X} \s_1}{\cD},\tab \mu_2=\frac{h_e\inv\h_2+\sqrt\mathcal{X}\s_2}{\cD},
}
so that $(\til{\mu}_1,\til{\mu}_2)=(h_a'\mu_1,{h_a'}\inv\mu_2)$ with $h_a'=\frac{h_a}{h_e c_\h}$.
\end{Def}

Notice that according to Proposition \ref{ratiounique}, $h_a'$ is unique up to multiplication by a constant $c$ with $c\mu = \mu$. However, since we are working in $\til{C}(F,\D)$, we have a freedom to rescale the ratios. Hence we shall fix our choice here as $h_a'=\frac{h_a}{h_e c_\h}$.

Similarly, using instead the inverse prime transform $P_{h_e,h_b}^{\h,-}$ on $\D ABC$, and again applying a diagonal transformation, we bring $\D CDB$ into standard position with odd parameters $(\til{\nu}_1,\til{\nu}_2)$ where
\Eqn{
\til{\nu}_1=\frac{h_b c_\h}{h_e}\frac{\sqrt\mathcal{X} h_e \h_1-\s_1}{\cD},\tab \til{\nu}_2=\frac{h_e}{h_b c_\h}\frac{\sqrt\mathcal{X} h_e\inv \h_2-\s_2}{\cD}.
}

Now recall that we want the configuration $\dia DABC\in \cS_{h_f}^{\mu,\nu}$, hence according to the spin graph evolution rule from Figure \ref{flipgraph}, we need to change the signs with respect to the attaching triangle at $BC$.  \begin{Def}
we shall fix the choice of the odd parameters $\nu_1,\nu_2$ of the triangle $\D CDB$ to be
\Eq{\label{nuCDB}
\nu_1=\frac{\s_1-\sqrt\mathcal{X} h_e\h_1}{\cD},\tab \nu_2=\frac{\s_2-\sqrt\mathcal{X} h_e\inv \h_2}{\cD},
}
so that $(\til{\nu}_1,\til{\nu}_2)=(-h_b'\nu_1,-{h_b'}\inv\nu_2)$ with $h_b'=\frac{h_b c_\h}{h_e}$.
\end{Def}

Again there is a freedom of choice of $h_b'$ up to a multiplication by a constant $c$ with $c\nu = \nu$, and we still have a freedom to rescale the ratio at $\nu$. Hence we shall fix our choice here as $h_b'=\frac{h_b c_\h}{h_e}$. Now this fixes all the rescaling of the remaining variables.

\begin{Lem} We have
\Eqn{
c_\mu c_\nu = c_\h c_\s.
}
\end{Lem}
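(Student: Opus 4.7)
The plan is to reduce the identity to two statements about products of Grassmann monomials in the four odd generators $\h_1,\h_2,\s_1,\s_2$ and then verify each by direct expansion, exploiting nilpotency to kill most terms. Since $c_\theta = 1+\theta_1\theta_2/6$ and $(\theta_1\theta_2)^2 = 0$, expanding both sides yields
\begin{align*}
c_\mu c_\nu &= 1+\tfrac{1}{6}(\mu_1\mu_2+\nu_1\nu_2)+\tfrac{1}{36}\mu_1\mu_2\nu_1\nu_2, \\
c_\h c_\s &= 1+\tfrac{1}{6}(\h_1\h_2+\s_1\s_2)+\tfrac{1}{36}\h_1\h_2\s_1\s_2,
\end{align*}
so the lemma is equivalent to the two identities (i) $\mu_1\mu_2+\nu_1\nu_2 = \h_1\h_2+\s_1\s_2$ and (ii) $\mu_1\mu_2\nu_1\nu_2 = \h_1\h_2\s_1\s_2$.

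For (i), I would substitute (\ref{muABD}) and (\ref{nuCDB}) and abbreviate $A:=\h_1\h_2$, $B:=\s_1\s_2$, $W:=h_e\h_1\s_2+h_e^{-1}\s_1\h_2$. Direct expansion of the numerators gives $\cD^2\mu_1\mu_2 = A+\chi B+\sqrt{\chi}\,W$ and $\cD^2\nu_1\nu_2 = \chi A+B-\sqrt{\chi}\,W$, so the $W$ terms cancel in the sum and $\cD^2(\mu_1\mu_2+\nu_1\nu_2)=(1+\chi)(A+B)$. Writing $\cD^2=(1+\chi)+R$ as in (\ref{cD}), each summand of $R$ contains exactly one of $\h_1,\h_2$ and one of $\s_1,\s_2$, whereas $A$ duplicates a $\h_i$ and $B$ duplicates a $\s_j$; hence $A\cdot R=B\cdot R=0$. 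Inverting $\cD^2$ as a finite geometric series in $R/(1+\chi)$ then leaves only the leading term, giving (i).

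For (ii), I would multiply the two expressions $\cD^2\mu_1\mu_2$ and $\cD^2\nu_1\nu_2$ above. The vanishing relations $A^2=B^2=0$ and $AW=WA=BW=WB=0$ (each such product would duplicate an odd generator) collapse the expansion to
\begin{align*}
\cD^4\,\mu_1\mu_2\nu_1\nu_2 = AB+\chi^2 BA-\chi W^2.
\end{align*}
A short Grassmann sign count gives $BA=AB$ (even number of transpositions) and $W^2=2(h_e\h_1\s_2)(h_e^{-1}\s_1\h_2)=-2AB$, so the right side becomes $(1+\chi)^2AB$. Since $AB$ is the top form in the four odd generators it annihilates $R$, so $\cD^4$ acts on $AB$ simply as $(1+\chi)^2$, and (ii) follows. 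The main care point I expect is the sign bookkeeping in $W^2=-2AB$; the rest is routine use of nilpotency together with the fact that $R$ is killed by any Grassmann monomial already involving two of the four odd generators.
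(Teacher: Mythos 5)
Your proof is correct and is essentially the paper's own argument made explicit: both expand the products, observe that the $\sqrt{\chi}\,W$ cross terms cancel so that $\cD^2(\mu_1\mu_2+\nu_1\nu_2)=(1+\chi)(\h_1\h_2+\s_1\s_2)$ and $\cD^4\mu_1\mu_2\nu_1\nu_2=(1+\chi)^2\h_1\h_2\s_1\s_2$, and then use the fact that the correction $R$ in $\cD^2=(1+\chi)+R$ is annihilated by any monomial already containing two of the four odd generators. The paper merely packages this by writing $\tfrac{1+\chi}{\cD^2}=1+P(\s_2\h_1,\s_1\h_2)$ and noting that $P$ dies against $\h_1\h_2$ and $\s_1\s_2$; your sign bookkeeping ($BA=AB$, $W^2=-2AB$) is accurate.
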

\begin{proof} Expanding in terms of $\h$ and $\s$ and simplifying, we get
\Eqn{
c_\mu c_\nu&=\left(1+\frac{\mu_1\mu_2}{6}\right)\left(1+\frac{\nu_1\nu_2}{6}\right)\\
&=\left(1+\frac{(1+\mathcal{X})\h_1\h_2}{6\cD^2}\right)\left(1+\frac{(1+\mathcal{X})\s_1\s_2}{6\cD^2}\right).
}
Note that
\Eqn{
\frac{1+\mathcal{X}}{\cD^2} &= \frac{1+\mathcal{X}}{1+\mathcal{X}+\frac{\sqrt\mathcal{X}}{2}(h_e\s_2\h_1+h_e\inv \s_1\h_2)}=1+P(\s_2\h_1, \s_1\h_2)
}
for some polynomials $P$. Hence 
$$\frac{1+\mathcal{X}}{\cD^2}\h_1\h_2=\h_1\h_2, \tab \frac{1+\mathcal{X}}{\cD^2}\s_1\s_2 = \s_1\s_2,$$
and we have
$$c_\mu c_\nu = c_\h c_\s.$$
\end{proof}

Now we are in a position to compare the two lifts under a Ptolemy transform. We consider 4 attaching triangles of $\dia ABCD$ as in Figure \ref{8points} in the universal covering $\til{F}$. Note that the two configurations share the same triangle $\D ABD_1$ attached to $\D ABC$ and $\D ABD$ respectively.

\begin{figure}[h!]
\centering
\begin{tikzpicture}[baseline,ultra thick]
\draw (-2,0)--(2,0);
\begin{scope}[bend right]
\draw (-2,0)to(0,2)to(2,0)to(0,-2)to(-2,0);
\draw (-2,0)to(135:2)to(0,2)to(45:2)to(2,0)to(-45:2)to(0,-2)to(-135:2)to(-2,0);
\end{scope}
\draw node[left] at (-2,0) {$A$};
\draw node[above] at (0,2) {$B$};
\draw node[right] at (2,0) {$C$};
\draw node[below] at (0,-2) {$D$};
\draw node[above left] at (135:2) {$D_1$};
\draw node[above right] at (45:2) {$D_2$};
\draw node[below right] at (-45:2) {$D_3$};
\draw node[below left] at (-135:2) {$D_4$};
\draw node at (0,0.5) {$\h$};
\draw node at (0,-0.5) {$\s$};
\draw node at (-1,1) {$\s_1$};
\draw node at (1,1) {$\s_2$};
\draw node at (1,-1) {$\s_3$};
\draw node at (-1,-1) {$\s_4$};
\end{tikzpicture}
\begin{tikzpicture}[baseline]
\draw[<->, thick](0,0)--(1,0);
\node[above]  at (0.5,0) {flip};
\end{tikzpicture}
\begin{tikzpicture}[baseline,ultra thick]
\draw (0,2)--(0,-2);
\begin{scope}[bend right]
\draw (-2,0)to(0,2)to(2,0)to(0,-2)to(-2,0);
\draw (-2,0)to(135:2)to(0,2)to(45:2)to(2,0)to(-45:2)to(0,-2)to(-135:2)to(-2,0);
\end{scope}
\draw node[left] at (-2,0) {$A$};
\draw node[above] at (0,2) {$B$};
\draw node[right] at (2,0) {$C$};
\draw node[below] at (0,-2) {$D$};
\draw node[above left] at (135:2) {$D_1$};
\draw node[above right] at (45:2) {$D_2$};
\draw node[below right] at (-45:2) {$D_3$};
\draw node[below left] at (-135:2) {$D_4$};
\draw node at (0.5,0) {$\nu$};
\draw node at (-0.5,0) {$\mu$};
\draw node at (-1,1) {$\s_1$};
\draw node at (1,1) {$\s_2$};
\draw node at (1,-1) {$\s_3$};
\draw node at (-1,-1) {$\s_4$};
\end{tikzpicture}
\caption{Labelling of the 4 attached triangles}
\label{8points}
\end{figure}
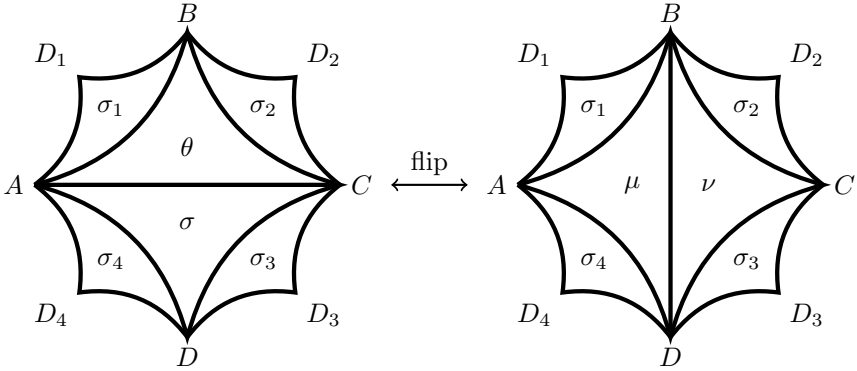

In particular, consider the lift of the two configurations with $\D AD_1B$ as the image of the base triangle. Then by definition the lifts differ by a post composition with $Z_{\frac{h_a'}{h_a}}$. Since the Ptolemy transform is local, $h_a'$  depend on $\s_1$, and this uniquely determines the relation between the two lifts. Then all the points on $\cL_0$ are uniquely defined by the lifts and also differ by $Z_{\frac{h_a'}{h_a}}$ for the image of both lifts. 

Let us denote by $\mathcal{X}_i$ the cross-ratio of the outside triangle containing $D_i$ with the attaching triangle in the quadrilateral before the flip, and $\mathcal{X}_i'$ the corresponding cross-ratio after the flip, with the common edge as the diagonal. For example. $\mathcal{X}_1, \mathcal{X}_1'$  are the cross-ratios of $\dia AD_1BC, \dia AD_1BD$ respectively.

Assume the spin structures $w_\sigma$ and $w_{\iota}$ coincide for simplicity. Also assume the lift of a point $X$ is defined by admissible transformation where the path goes through $\D B D_2 C$. Then by definition the image of the point $X$ is defined by the admissible transformations of the form
$$ g\circ P_{h_b\inv, h_s}^{\s_2,\pm}\circ\Up_{h_b}^{\mathcal{X}_2}\circ P_{h_a h_b}^\h\circ\Up_{h_a\inv}^{\mathcal{X}_1}$$
for the original lift with some $g$ defined outside of these triangles, and
$$ g\circ P_{{h_b'}\inv, h_s}^{\s_2,\pm}\circ Z_{-1}\circ \Up_{h_b'}^{\mathcal{X}_2'}\circ P_{h_f\inv, h_b'}^{\nu}\circ\Up_{h_f}^{\mathcal{X}\inv}\circ P_{h_a' h_f}^\mu\circ\Up_{{h_a'}\inv}^{\mathcal{X}_1'}$$
for the lift with flipped diagonal, where $g$ is the same as before and the $Z_{-1}$ term comes from the spin graph evolution. Since the lift differs by $Z_{\frac{h_a'}{h_a}}$, we have the equality:
\begin{eqnarray}
&&g\circ P_{h_b\inv, h_s}^{\s_2}\circ\Up_{h_b}^{\mathcal{X}_2}\circ P_{h_a h_b}^\h\circ\Up_{h_a\inv}^{\mathcal{X}_1}=\nonumber\\
&&g\circ P_{{h_b'}\inv, h_s}^{\s_2}\circ Z_{-1}\circ \Up_{h_b'}^{\mathcal{X}_2'}\circ P_{h_f\inv, h_b'}^{\nu}\circ\Up_{h_f}^{\mathcal{X}\inv}\circ P_{h_a' h_f}^\mu\circ\Up_{{h_a'}\inv}^{\mathcal{X}_1'}\circ Z_{\frac{h_a'}{h_a}}.\nonumber
\end{eqnarray}
Now recall that the above admissible transformations are compositions of
\Eqn{
\Up_{h_k'}^{\mathcal{X}_k} \circ P_{h_k,h_k'}^{\h_k}=(J D_{\sqrt\mathcal{X}_k} Z_{h_k'})\circ (Z_{c_{\h_k}}Z_{h_k'}\inv J \cU_{\h_k}\inv Z_{h_k})= D_{\sqrt\mathcal{X}_k\inv} Z_{-c_{\h_k}} \cU_{\h_k}\inv Z_{h_k},
}
where $\cU_{\h_k}$ are lower Borel subgroup elements with $1$'s on the diagonal. In particular, we can commute all the Borel elements to one side, again with $1$'s on the diagonals. Hence the diagonal of the product of admissible transformation depends only on the diagonal transformations $D, Z$, and the number of $J$'s changing the signs, which is taken care of automatically by the choice of the spin graph evolution. In particular, by looking at the diagonal entry and using the fact that the $\cZ$-subgroup commutes with $J$ and $D$, we immediately obtain the following relation:
\Eqn{
Z_{h_b}\inv D_{\sqrt\mathcal{X}_2\inv}Z_{c_\h}D_{\sqrt\mathcal{X}_1\inv}=Z_{h_b'}\inv D_{\sqrt{\mathcal{X}_2'}\inv} Z_{c_\nu} Z_{h_f\inv} D_{\sqrt\mathcal{X}} Z_{c_\mu} D_{\sqrt{\mathcal{X}'}\inv}Z_{\frac{h_a'}{h_a}}.
}
Using the fact that $\mathcal{X}_2\mathcal{X}_1=\mathcal{X}_2'\mathcal{X}\inv \mathcal{X}_1'$, we obtain 
$$\frac{h_b'}{h_b} = \frac{h_a'}{h_a} \frac{c_\nu c_\mu}{h_f c_\h}=\frac{h_a'}{h_a}\frac{c_\s}{h_f}.$$
Since $h_a'=\frac{h_a}{h_e c_\h}$ and $h_b'=\frac{h_b c_\h}{h_e}$, we get $h_f =\frac{c_\s}{c_\h^2}$. Using $\mathcal{X}_1'=\mathcal{X}_1 \cD$ which can be derived from \eqref{ef}, we can check that each off-diagonal entries is also the unity.

Using this trick, it is easy to obtain the other relations by just looking at the diagonal entries. We obtain the relations
\Eqn{
\frac{h_d'}{h_d}=\frac{h_a'}{h_a}h_e c_\nu,\tab \frac{h_c'}{h_c}=\frac{h_b'}{h_b}\frac{h_e}{c_\mu},
}
and finally get the second main result of this paper:
\begin{Thm}\label{oddptolemy}
The Ptolemy transformation is given by
\Eq{
\mu_1=\frac{h_e\h_1+\sqrt\mathcal{X} \s_1}{\cD},\tab\mu_2=\frac{h_e\inv\h_2+\sqrt\mathcal{X} \s_2}{\cD} \label{mu}\\
\nu_1=\frac{\s_1-\sqrt\mathcal{X} h_e\h_1}{\cD},\tab \nu_2=\frac{\s_2-\sqrt\mathcal{X} h_e\inv\h_2}{\cD}\label{nu}
}
\Eq{
h_a' = \frac{h_a}{h_e c_\h},\tab h_b' = \frac{h_b c_\h}{h_e},\tab h_c' =h_c\frac{c_\h}{c_\mu}, \tab h_d'=h_d\frac{c_\nu}{c_\h},\tab h_f =\frac{c_\s}{c_\h^2},
}
where $$\cD=\sqrt{1+\mathcal{X}+\frac{\sqrt\mathcal{X}}{2}(h_e\inv\s_1\h_2+h_e\s_2\h_1)}$$ and $c_{\theta}=1+\frac{\theta_1\theta_2}{6}$, while the signs of the fermions follow the spin graph evolution rule as in Figure \ref{flipgraph}.
\end{Thm}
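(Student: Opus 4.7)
The plan is to treat the theorem in three stages: the even $\l$-length relation, the odd invariants $\mu_i$ and $\nu_i$, and finally the ratio transformations. For the first stage, I would place $\dia ABCD$ in standard position $\cS_{h_e}^{\h,\s}$, so that Lemma \ref{rst} and Corollary \ref{rstCor} give all eight coordinates of $A,B,C,D$ explicitly in terms of the five $\l$-lengths, the ratio $h_e$, and the fermions $\h, \s$. Computing $f^2 = \<B, D\>$ directly via \eqref{pairing} then yields an expression of the form
\Eqn{
e^2 f^2 = (ac + bd)^2 + R,
}
where $R$ is a polynomial in the fermions with $R^3 = 0$; the Taylor expansion of the square root therefore terminates at second order in $R$, and after cancellation with the $\s_1\s_2\h_1\h_2$ cross-term reduces to the claimed formula for $ef$.

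For the odd invariants, applying the prime transformation $P_{h_e, h_a}^\h$ brings $\D BCA$ into standard position with odd data $h_a \h$, while Lemma \ref{primeaction} describes how $D$ is transformed. A single diagonal $D_a$ with $a = \sqrt[4]{x_2'/x_1'}$ then puts $\D BDA$ into standard position, and Proposition \ref{sigma} reads off the odd invariants $(\til\mu_1, \til\mu_2)$. Substituting the $\l$-length expressions from Lemma \ref{rst} collapses the result to a common denominator $\cD$, matching the formulas \eqref{muABD} under the rescaling choice $h_a' = h_a/(h_e c_\h)$. The formulas for $\nu_i$ follow by a symmetric calculation with the inverse prime transformation $P_{h_e, h_b}^{\h,-}$; the extra sign is forced by the edge orientation reversal dictated by the spin graph evolution rule in Figure \ref{flipgraph}.

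The main obstacle is pinning down the ratios $h_c', h_d', h_f$, which are not tied to any single triangle. Here I would invoke Theorem \ref{main_lift}: fixing the same base triangle $\D A D_1 B$ of Figure \ref{8points} for both triangulations, the two lifts are forced to agree up to one global $\cZ$-rescaling $Z_{h_a'/h_a}$. Walking along a fatgraph path from the base triangle through an outside triangle, each admissible step factorizes as
\Eqn{
\Up_{h_k'}^{\chi_k} \circ P_{h_k, h_k'}^{\h_k} = D_{\sqrt{\chi_k}\inv} Z_{-c_{\h_k}} \cU_{\h_k}\inv Z_{h_k},
}
where $\cU_{\h_k}$ is a lower-triangular matrix with unit diagonal. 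Commuting all unipotent factors to one side leaves a diagonal product whose entries depend only on the $D$- and $Z$-factors (the $J$-sign contributions being absorbed by the spin graph rule). Matching diagonals for the path into $\D B D_2 C$ and using the cross-ratio identity $\chi_1 \chi_2 = \chi_1' \chi\inv \chi_2'$ gives a scalar equation which, combined with the already determined $h_a'$ and $h_b'$, forces $h_f = c_\s / c_\h^2$. The analogous argument applied to the paths through $\D C D_3 D$ and $\D D D_4 A$ yields the explicit formulas for $h_c'$ and $h_d'$, and consistency of the off-diagonal entries is then a direct check using $\chi_1' = \chi_1 \cD$ derived from \eqref{ef}.
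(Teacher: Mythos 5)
Your proposal is correct and follows essentially the same route as the paper's proof: the same standard-position computation of $f^2=\<B,D\>$ with the nilpotent remainder $R$, the same prime-transformation-plus-diagonal argument (via Lemma \ref{primeaction} and Proposition \ref{sigma}) to extract $\mu$ and $\nu$ with the rescaling choices fixing $h_a'$ and $h_b'$, and the same comparison of admissible transformations along fatgraph paths through the attached triangles — reading off diagonal entries of the factorization $\Up_{h_k'}^{\chi_k}\circ P_{h_k,h_k'}^{\h_k}$ and using $c_\mu c_\nu = c_\h c_\s$ together with the cross-ratio identity — to pin down $h_f$, $h_c'$, $h_d'$. No gaps.
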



After flipping twice, we arrive at the same equivalence class of coordinate under vertex rescaling, or in other words 
$$(h_a'', h_b'', h_c'', h_d'', h_e'' | \h'', \s'') = 
(h_a \a_1, h_b \a_1, h_c \a_2, h_d \a_2, h_e\inv \a_1\inv\a_2 | \a_2\inv \s,-\a_1\inv \h),$$
where double prime denote the variables after application of Ptolemy transformation twice,  
$\a_1=\frac{c_\h^2}{h_e c_\nu}, \a_2=\frac{c_\h^2}{c_\mu^2}$, and the new coordinate tuple corresponds to the triangulation where the quadrilateral is turned upside down, altering the numeration. 
In particular by an upside-down transformation, we get back the same configuration.

In the cases $F=F_1^1,F_0^3$ when some of the sides of $\dia ABCD$ are identified, the Ptolemy transform of the ratios can be calculated by looking at the representation of $\what\rho(\c)$ directly as in Appendix \ref{sec:specialPtolemy}.

\begin{Cor} If two triangulations $\D$ and $\D'$ with fixed representative of the spin graphs are related by a finite sequence of flips, then the super Ptolemy transformations in Theorem \ref{bosonicptolemy} and Theorem \ref{oddptolemy} provide a coordinate transformation from $C(F,\D)$ to $C(F,\D')$. In particular the mapping class group $MC(F)$ acts naturally on $S\til{T}(F)$ permuting the spin components.
\end{Cor}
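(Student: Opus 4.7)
The plan is to assemble three ingredients already prepared in the paper: (i) the classical connectivity of the Ptolemy groupoid, (ii) the fact that each super Ptolemy transform preserves the underlying lift $\ell \in S\til{T}(F)$, and (iii) the bijection between lifts and coordinates established in the final theorem of Section~\ref{sec:teich}. Concretely, I would first invoke the classical result that any two ideal triangulations of $F$ are connected by a finite sequence of flips, so it suffices to check the statement one flip at a time.

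For a single flip exchanging a diagonal $e$ with $f$ inside a quadrilateral $\dia ABCD$, the formulas of Theorems~\ref{bosonicptolemy} and~\ref{oddptolemy} were derived precisely by requiring that the lift $\ell_{\w_\sigma,\w_\iota}$ associated to the primed coordinates on $(F,\D')$ coincide on $\til{\D}_\oo' \cap \til{\D}_\oo$ with the lift associated to the unprimed coordinates on $(F,\D)$, after the spin graph evolves according to Figure~\ref{flipgraph}. Thus I would argue that the map $C(F,\D)\to C(F,\D')$ defined locally by these formulas, and identically elsewhere, descends from $\til{C}\to\til{C}$ to the equivalence classes because the rescalings of the auxiliary ratios $h_a',h_b',h_c',h_d',h_f$ were chosen compatibly with the vertex-rescaling equivalence relation. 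Combined with Theorem~\ref{main_lift}, this shows that the new coordinate vector produces the same lift $\ell$ and so lies in the same fiber of $S\til{T}(F)$ over the equivalence class in $C(F,\D')$.

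Iterating the argument along any sequence of flips $\D=\D_0 \to \D_1\to\cdots\to\D_N=\D'$ gives a well-defined coordinate transformation $C(F,\D)\to C(F,\D')$, and by the uniqueness clause of Theorem~\ref{main_lift} this transformation is independent of the chosen flip sequence: two such sequences produce the same lift $\ell\in S\til{T}(F)$, and the theorem asserting that $C(F,\D')$ is a global analytic homeomorphism onto $\R_+^{8g+4s-7|8g+4s-8}/\Z_2\times\Z_2$ for each choice of spin representative then forces the two image coordinate vectors to coincide. The mapping class group action is obtained by picking, for each $\phi\in MC(F)$, any flip sequence from $\D$ to $\phi(\D)$ and composing with the tautological relabeling induced by $\phi$; well-definedness follows from the same uniqueness principle.

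The main obstacle I anticipate is bookkeeping for the spin data under the composition: each flip evolves both $\w_\sigma$ and $\w_\iota$ by the rule of Figure~\ref{flipgraph}, and one must check that the resulting action of $MC(F)$ on $\cO(\t)\times\cO(\t)$ (the two spin structures) is exactly the geometric action on pairs of spin structures on $F$, so that the components of $S\til{T}(F)$ labeled by $(s_\sigma,s_\iota)$ are genuinely permuted. This reduces to verifying that the evolution rule of Figure~\ref{flipgraph} is $H^1(F;\Z_2)$-equivariant and induces the classical mapping class group action on $\cQ(F)$ via the isomorphism of Proposition~\ref{isospin}; this was effectively done in \cite{PZ} for $\cN=1$ and can be imported here since the rule we use is identical. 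Once that is in place, the ``permutation of spin components'' assertion follows.
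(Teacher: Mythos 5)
Your proposal is correct and follows the same route the paper intends: the paper's own proof of this corollary is a one-line deferral to the analogous argument in \cite{PZ}, and your sketch simply reconstructs that argument (flip connectivity, the defining property that each Ptolemy transform preserves the lift, descent to $C(F,\D)$, independence of the flip sequence via uniqueness of the lift, and the spin-evolution bookkeeping). The one point you flag as an obstacle — that the evolution rule of Figure \ref{flipgraph} realizes the geometric action on spin structures — is indeed exactly the content imported from \cite{PZ}, so nothing is missing.
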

\begin{proof} The argument for the mapping class group action follows as in \cite{PZ}.
\end{proof}
\appendix
\section{$SL(1|2)$: Notations and conventions}\label{sec:SL12}
Following \cite{PZ}, we shall be working over the Grassmann algebra $S_\R$ over the field $\R$ with possibly infinitely many generators and with standard decomposition $S_\R=S_0\o+S_1$ into even and odd elements. Given $a\in S_\R$ there is a projection $S_\R\to \R$ to its constant term $a\mapsto a_\#$ called the \emph{body} of $a$. By abuse of notation, unless otherwise specified we shall write $a\neq 0$ if the body of $a$ is non-zero and $a>0$ if the body of $a$ is positive. Then $a$ is invertible if and only if $a\neq 0$. For simplicity we write $\R_+$ for the subspace of $S_0$ with positive bodies.
\begin{Def} The super vector space $\R^{m|n}$ is defined to be the space
\Eq{
\R^{m|n} := \{(z_1,z_2,...,z_m|\h_1,\h_2,...,\h_n) : z_i\in S_0, \h_j\in S_1\},
}
and 
$\R_+^{m|n}$ a subspace of $\R^{m|n}$ such that all even coordinates $z_i$ have positive bodies.
\end{Def}

Let $\g$ be a Lie superalgebra, one can consider its Grassmann envelope $\g(S_\R):=S_\R\ox \g$ so that one can construct a representation of $\g(S_\R)$ in the space $S_\R\ox \R^{m|n}$ from a given representation of $\g$ in $\R^{m|n}$. One can then produce a representation of the corresponding Lie supergroup $G(S_\R)$ by exponentiating pure even elements from $\g(S_\R)$ in $S_\R\ox \R^{m|n}$.

In this paper we shall only consider matrix representations on $\R^{2|1}$. For conventional convenience we shall choose the basis of $\R^{2|1}$ such that the matrix elements of a pure even matrix are of the form
\Eq{
\veca{b&f&b\\f&b&f\\b&f&b},
}
where $b$ and $f$ stand for even (bosonic) and odd (fermionic) parity respectively. The supermatrix multiplication is then given by
\Eq{\label{signs}
&\veca{a&\c&b\\\a&f&\d\\c&\b&d}\veca{a'&\c'&b'\\\a'&f'&\d'\\c'&\b'&d'}:=
\veca{
aa'-\c\a'+bc'&a\c'+\c f'+b\b'&ab'-\c\d'+bd'\\
\a a'+f\a'+\d c'& -\a\c'+ff'-\d\b'&\a b'+f\d'+\d d'\\
ca'-\b\a'+dc'&c\c'+\b f'+d\b'&cb'-\b \d'+dd'
}.
}

Note that our convention has the signs of the products of odd coordinates reversed. This follows from the choice of notation that 
$$(\a\mu)(\b\nu)=(-\a\b)(\mu\nu)$$
in the universal enveloping algebra if $\a,\b$ are odd elements of $\cS_\R$ and $\mu,\nu$ are odd elements of the Lie superalgebra. 

\begin{Def} The superdeterminant or Berezinian of an even matrix $M=\veca{A&B\\C&D}$ with $D$ invertible is defined as
\Eq{
sdet(M)=(\det D)\inv \det(A+BD\inv C).
}
The supertrace of a supermatrix is given by
\Eqn{
str\veca{a&*&*\\ *&b&*\\ *&*&c}:=a+c-b.
}
\end{Def}
Using our convention, the superdeterminant of a diagonal matrix is given by
\Eq{
sdet\veca{a&0&0\\0&f&0\\0&0&d} = \frac{ad}{f}.
}
\begin{Def}
$\sl(1|2)\simeq \mathfrak{osp}(2|2)$ is the Lie superalgebra spanned by four odd generators $e_1^\pm, e_2^\pm$ and four bosonic generators $E,F,h_1,h_2$. Elements $e_1^\pm, e_2^\pm, h_{1}, h_{2}$ are the Chevalley generators and satisfy the relations of $A(0,1)$ superalgebra where both simple roots are chosen to be grey in Kac's classification of Lie superalgebra (\cite{dict, kac}). One can find explicit commutation relations in \cite{GQS}. 
In particular it can be realized as the algebra of supertraceless $(2|1)\x(2|1)$ supermatrices so that
\Eqn{
h_1&=\veca{1&0&0\\0&1&0\\0&0&0},\tab h_2=\veca{0&0&0\\0&1&0\\0&0&1},\\
e_1^+&=\veca{0&1&0\\0&0&0\\0&0&0},\tab e_2^+=\veca{0&0&0\\0&0&1\\0&0&0},\\
e_1^-&=\veca{0&0&0\\1&0&0\\0&0&0},\tab e_2^-=\veca{0&0&0\\0&0&0\\0&1&0},\\
E&=\veca{0&0&1\\0&0&0\\0&0&0},\tab F=\veca{0&0&0\\0&0&0\\1&0&0}.
}
Let us also define
\Eqn{
H:=h_1-h_2=\veca{1&0&0\\0&0&0\\0&0&-1},\tab Z:=h_1+h_2=\veca{1&0&0\\0&2&0\\0&0&1}.
}
Then $\{H,E,F\}$ is an $\sl(2)$-triple, $\{h_i,e_i^+,e_i^-\}_{i=1,2}$ are $\gl(1|1)$ triples, and $Z$ generates the subgroup $\cZ$ that commutes with the bosonic generators.
\end{Def}

The corresponding supergroup $SL(1|2)$ can be faithfully realized as $(2|1)\x(2|1)$ supermatrices with $sdet(g)=1$. In particular a diagonal matrix $\veca{a&0&0\\0&f&0\\0&0&d}$ belongs to $SL(1|2)$ if and only if $ad=f$.

\begin{Prop}\label{involution}
There exists an involution $\Psi$ given by
\Eqn{
e_1^+\mapsto e_2^+,&\tab e_1^-\mapsto -e_2^-,\\
e_2^+\mapsto e_1^+,&\tab e_2^-\mapsto -e_1^-,\\
h_1\mapsto -h_2,&\tab h_2\mapsto -h_1,\\
E\mapsto E,&\tab F\mapsto F, \tab H\mapsto H.
}
It preserves the $\sl(2)$ triple, and interchanges the two $\gl(1|1)$ triples.
\end{Prop}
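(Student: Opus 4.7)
The plan is to extend $\Psi$ linearly from the generators listed and then verify successively that $\Psi$ preserves the $\Z_2$-grading, that $\Psi^2=\mathrm{id}$, and most importantly that $\Psi$ intertwines the super bracket. The final assertions about the $\sl(2)$-triple and the two $\gl(1|1)$-triples will then follow by direct inspection of the formulas.

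The first two checks are routine. Parity preservation is manifest: every even generator maps to an even element and every odd generator to an odd element. The identity $\Psi^2=\mathrm{id}$ on $E,F,H$ is trivial; the double swaps $h_1\leftrightarrow -h_2$ and $e_i^+\leftrightarrow e_j^+$ introduce zero sign flips upon iteration, while $e_i^-\leftrightarrow -e_j^-$ introduces exactly two, so every generator returns to itself.

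The substance of the proof is the finite check that $\Psi([X,Y\})=[\Psi(X),\Psi(Y)\}$ on all pairs of basis elements. I would organize this as follows. Brackets internal to the $\sl(2)$-triple $\{E,F,H\}$ are preserved trivially because $\Psi$ fixes these generators. For the brackets of $E,F,H$ with the $e_i^\pm$, note that $e_1^\pm$ and $e_2^\pm$ carry identical $\sl(2)$-weights, so the permutation of indices is compatible with the adjoint action provided the signs on $\Psi(e_i^-)$ are chosen correctly. The remaining nontrivial relations are the super anticommutators among the odd generators, which produce either a Cartan element or one of $E,F$. For example, $[e_1^+,e_1^-\}=h_1$ is matched by $\Psi(h_1)=-h_2=[e_2^+,-e_2^-\}=[\Psi(e_1^+),\Psi(e_1^-)\}$, which is precisely why the negative signs on the $e_i^-$ are required; similarly, tracking signs in $[e_1^+,e_2^+\}$ and $[e_1^-,e_2^-\}$ confirms compatibility with $E\mapsto E$ and $F\mapsto F$, and the cross brackets $[e_1^\pm,e_2^\mp\}$ vanish on both sides.

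The main obstacle is the sign book-keeping across the dozen or so super anticommutators, particularly those involving the two diagonal generators $h_1,h_2$. The cleanest strategy is to carry out the verification directly in the explicit $(2|1)\x(2|1)$ matrix realization given just above the statement, using the sign convention \eqref{signs}: every required bracket reduces to a single product of elementary supermatrix units and can be read off unambiguously, avoiding any appeal to abstract Serre relations. Once the homomorphism property is established, the subalgebra statements are immediate: $\{E,F,H\}$ is fixed pointwise by $\Psi$, whereas the triple $\{h_1,e_1^+,e_1^-\}$ is sent to $\{-h_2,e_2^+,-e_2^-\}$, which spans the same $\gl(1|1)$ subalgebra as $\{h_2,e_2^+,e_2^-\}$, and symmetrically in the other direction.
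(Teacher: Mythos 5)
Your verification is correct, and the paper in fact states this proposition without any proof, treating it as a routine check; your plan (extend linearly, confirm parity and $\Psi^2=\mathrm{id}$, verify the super bracket on all pairs of generators in the explicit $(2|1)\times(2|1)$ matrix realization, then read off the subalgebra statements) is exactly the verification being implicitly invoked. In particular your key sign check $[\Psi(e_1^+),\Psi(e_1^-)\}=[e_2^+,-e_2^-\}=-h_2=\Psi(h_1)$ is the point that forces the minus signs on the images of $e_i^-$, and the rest of the bookkeeping goes through as you describe.
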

\begin{Def}\label{SL0} we shall denote by
\Eq{\til{SL}(1|2):=\Psi \ltimes SL(1|2)_0.}
where $SL(1|2)_0$ is the component of $SL(1|2)$ with $f>0$. In particular, there is a canonical projection
\Eqn{\SL(1|2)\to SL(2,\R)}
given by sending $\Psi\to Id_{2\x 2}$ and $\veca{a&*&b\\{*}&f&*\\c&*&d}\to \frac{1}{\sqrt{f_\#}}\veca{a_\#&b_\#\\c_\#&d_\#}$.
\end{Def}

\begin{Rem}\label{SL12choice}
In principle, instead of taking $\widetilde{SL}(1|2)$, we could consider, from the group-theoretic point of view,  a more natural embedding of $\widetilde{SL}(1|2)$ into $OSp(2|2)$, the supergroup of $(2|2)\times(2|2)$ supermatrices preserving a certain bilinear form, so that the involution $\Psi$ is described by a certain $(2|2)\times(2|2)$ supermatrix. However, since we are only working with the \emph{adjoint} representation of the corresponding group in the light cone formalism, it is much easier to work with $(1|2)\times(1|2)$ supermatrices, and that explains our choice of embedding.

One can recover that $(2|2)\times(2|2)$ realization of $\widetilde{SL}(1|2)$ from the action of $\widetilde{SL}(1|2)$ on the rays of the light cone $\mathcal{L}_0$, which coincides with the action of fractional linear transformations on $\mathbb{R}^{1|2}$ (see e.g., \cite{Nat}) realized as a subgroup of $OSp(2|2)$. 
This gives the relation of our constructions with the geometry of uniformized $\mathcal{N}=2$ Riemann surfaces \cite{cohn, drs, br}.
\end{Rem}


\section{Ptolemy transformation in the special cases}\label{sec:specialPtolemy}
In this section, we shall deal with the Ptolemy transformation of the ratio coordinates where some of the sides of the quadrilateral $\dia ABCD$ are identified in the projection $\til{F}\to F$ to the surface. There are basically two cases: where two consecutive sides are identified or two opposite sides are identified. Similar to the argument in the general case, it suffices to look at the admissible transformation relating the sides. In particular, these are exactly the representations of the simple cycle $\c\in\pi_1$ that joins the corresponding edge. We note that $\mu, \nu$ are given by the same formula as before, and by fixing the rescaling, we can also let $h_f=\frac{c_\s}{c_\h^2}$ be the same expression as before.

In all the cases, we let $\D BCA$ be the base triangle of the lift before the flip, and $\D BDA$ be the base triangle after the flip. Then by assumption these two lifts are related by the post-composition with a diagonal transformation $D_{\sqrt[4]{\frac{x_2'}{x_1'}}}=D_{\sqrt\cD}$.


\begin{figure}[h!]
\centering
\begin{tikzpicture}[baseline]
\draw (-2,0)--(2,0);
  \draw[ 
       very thick, bend right,
        decoration={markings, mark=at position 0.65 with {\arrow{>}}},
        postaction={decorate}
        ]
        (-2,0) to (0,2);
  \draw[ 
        very thick, bend right,
        decoration={markings, mark=at position 0.4  with {\arrow{<}}},
        postaction={decorate}
        ]
        (0,2) to (2,0);
\begin{scope}[bend right]
\draw (2,0)to(0,-2)to (-2,0);
\draw (0,2)to(45:2)to(2,0);
\draw (45:2)to(22.5:2)to(2,0);
\end{scope}
\draw node[left] at (-2,0) {$A$};
\draw node[above] at (0,2) {$B$};
\draw node[right] at (2,0) {$C$};
\draw node[below] at (0,-2) {$D$};
\draw node[above right] at (45:2) {$D_2$};
\draw node at (-0.7,0.3) {$h_a$};
\draw node at (0.7,0.3) {$h_a\inv$};
\draw node at (-0.5,1.3) {$a$};
\draw node at (0.5,1.3) {$a$};
\draw node at (0.7,1.6) {$a$};
\draw node at (0,-0.2) {$e$};
\draw node at (1.3,0.7) {$e$};
\draw node at (0.5,-1.3) {$c$};
\draw node at (-0.5,-1.3) {$d$};
\draw[ultra thick] (-1,1)--(0,0.5)--(1,1);
\end{tikzpicture}
\begin{tikzpicture}[baseline]
\draw[<->, thick](0,0)--(1,0);
\node[above]  at (0.5,0) {flip};
\end{tikzpicture}
\begin{tikzpicture}[baseline]
\draw (0,2)--(0,-2);
  \draw[ 
        very thick, bend right,
        decoration={markings, mark=at position 0.65 with {\arrow{>}}},
        postaction={decorate}
        ]
        (-2,0) to (0,2);
  \draw[ 
        very thick, bend right,
        decoration={markings, mark=at position 0.4 with {\arrow{<}}},
        postaction={decorate}
        ]
        (0,2) to (2,0);
\begin{scope}[bend right]
\draw (2,0)to(0,-2)to (-2,0);
\draw (0,2)to(45:2)to(22.5:2);
\draw (0,2)to(22.5:2)to(2,0);
\end{scope}
\draw node[left] at (-2,0) {$A$};
\draw node[above] at (0,2) {$B$};
\draw node[right] at (2,0) {$C$};
\draw node[below] at (0,-2) {$D$};
\draw node[above right] at (45:2) {$D_2$};
\draw[ultra thick] (-1,1)--(-0.5,0)--(0.5,0)--(1,0.8);
\draw node at (-1,0) {$h_a'$};
\draw node at (1,0) {${h_a'}\inv$};
\draw node at (-0.5,1.3) {$a$};
\draw node at (0.2,1.1) {$a$};
\draw node at (1,1.2) {$f$};
\draw node at (-0.2,0.5) {$f$};
\draw node at (2,0.5) {$d$};
\draw node at (0.5,-1.3) {$c$};
\draw node at (-0.5,-1.3) {$d$};
\end{tikzpicture}
\caption{Case 1: Consecutive sides identified plus the path of admissible transformation}
\label{case1}
\end{figure}


\begin{figure}[h!]
\centering
\begin{tikzpicture}[baseline]
\draw (-2,0)--(2,0);
  \draw[ 
        very thick, bend right,
        decoration={markings, mark=at position 0.65 with {\arrow{>}}},
        postaction={decorate}
        ]
        (-2,0) to (0,2);
  \draw[ 
        very thick, bend right,
        decoration={markings, mark=at position 0.65  with {\arrow{<}}},
        postaction={decorate}
        ]
        (2,0) to (0,-2);
\begin{scope}[bend right]
\draw (0,2)to(2,0);
\draw (2,0)to(0,-2)to (-2,0);
\draw (2,0)to(-45:2)to(-67.5:2)to(0,-2);
\draw (-45:2)to(0,-2);
\end{scope}
\draw node[left] at (-2,0) {$A$};
\draw node[above] at (0,2) {$B$};
\draw node[right] at (2,0) {$C$};
\draw node[below] at (0,-2) {$D$};
\draw node[below right] at (-45:2) {$D_3$};
\draw node at (-0.7,0.3) {$h_a$};
\draw node at (0.7,-0.3) {$h_a\inv$};
\draw node at (-0.5,1.3) {$a$};
\draw node at (0.5,1.3) {$b$};
\draw node at (0.2,0.2) {$e$};
\draw node at (0.5,-1.3) {$a$};
\draw node at (-0.5,-1.3) {$d$};
\draw node at (1.6,-0.7) {$b$};
\draw node at (1,-1.3) {$e$};
\draw[ultra thick] (-1,1)--(0,0.5)--(0,-0.5)--(1,-1);
\end{tikzpicture}
\begin{tikzpicture}[baseline]
\draw[<->, thick](0,0)--(1,0);
\node[above]  at (0.5,0) {flip};
\end{tikzpicture}
\begin{tikzpicture}[baseline]
\draw (0,2)--(0,-2);
  \draw[ 
        very thick, bend right,
        decoration={markings, mark=at position 0.65 with {\arrow{>}}},
        postaction={decorate}
        ]
        (-2,0) to (0,2);
  \draw[ 
       very thick, bend right,
        decoration={markings, mark=at position 0.65 with {\arrow{<}}},
        postaction={decorate}
        ]
        (2,0) to (0,-2);
\begin{scope}[bend right]
\draw (0,2)to(2,0);
\draw (2,0)to(0,-2)to (-2,0);
\draw (2,0)to(-45:2)to(-67.5:2)to(0,-2);
\draw (2,0)to(-67.5:2);
\end{scope}
\draw node[left] at (-2,0) {$A$};
\draw node[above] at (0,2) {$B$};
\draw node[right] at (2,0) {$C$};
\draw node[below] at (0,-2) {$D$};
\draw node[below right] at (-45:2) {$D_3$};
\draw node at (-1,0.2) {$h_a'$};
\draw node at (1.1,-0.2) {${h_a'}\inv$};
\draw node at (-0.5,1.3) {$a$};
\draw node at (0.5,1.3) {$b$};
\draw node at (0.2,0.2) {$f$};
\draw node at (0.5,-1.3) {$a$};
\draw node at (-0.5,-1.3) {$d$};
\draw node at (1,-1.3) {$f$};
\draw node at (0.4,-2) {$d$};
\draw[ultra thick] (-1,1)--(-0.5,0)--(0.5,0)--(0.9,-0.9);
\end{tikzpicture}
\caption{Case 2: Opposite sides identified plus the path of admissible transformation}
\label{case2}
\end{figure}
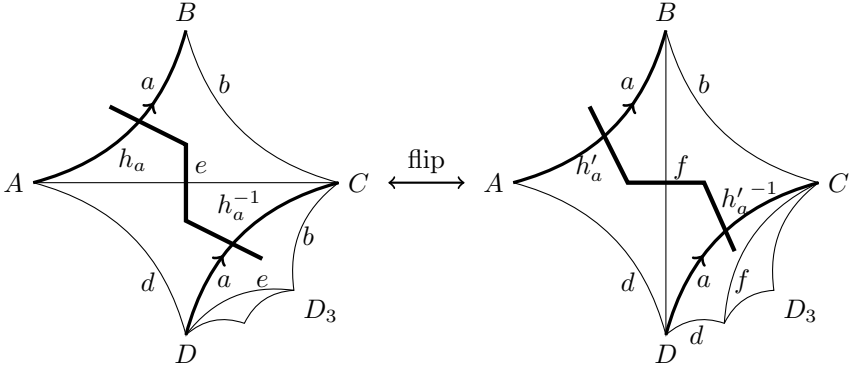


Case 1: Two adjacent sides. Assume $\vec[AB]$ and $\vec[CB]$ are identified. Let $g_1$ and $g_2$ be the representations of the cycles joining the edge before and after the lift. Then
$$g_1=\Up_{h_a\inv}^{\mathcal{X}_2}\circ P_{h_a h_a\inv}^{\h}$$ 
and
$$g_2=Z_{-1}\circ \Up_{{h_a'}\inv}^{\mathcal{X}_2'}\circ P_{h_f\inv {h_a'}\inv}^{\nu}\circ\Up_{h_f}^{\mathcal{X}\inv}\circ P_{h_a' h_f}^{\mu}$$
with $g_1 = D_{\sqrt\cD}\circ g_2 \circ D_{\sqrt\cD}\inv$.
Note that we have two $J$'s in $g_1$ and four $J$'s in $g_2$, hence we see that the sign $Z_{-1}$ is needed for the conjugation. In particular the orientation of the branch corresponding to $h_a$ is flipped in the spin graph evolution.

Since we only have the (clockwise) prime transformation $P^{+}$, the same argument as in the general case shows that we only need to look at the diagonal entries. Using $\mathcal{X}\inv \mathcal{X}_2' = \mathcal{X}_2=1$, we get
$$Z_{c_\h}Z_{h_a} =  Z_{c_\nu}Z_{h_f}\inv Z_{c_\mu}Z_{h_a'}$$
or $$h_a' = \frac{c_\h}{c_\mu c_\nu}h_f h_a =\frac{h_f}{c_\s}h_a= \frac{h_a}{c_\h^2}.$$

If $\vec[BC]$ and $\vec[DC]$ are identified instead, the inverse argument shows that the rule is given by
$$h_b' = \frac{c_\mu}{h_e}h_b,$$ and again an extra sign is needed which determines the spin graph evolution.

\begin{Cor} In the case of $F_0^3$, where $\vec[AB]$ is identified with $\vec[CB]$, and $\vec[BC]$ is identified with $\vec[DC]$,  the Ptolemy transform $\R^{6|4}\to \R^{6|4}$ is given by
$$(a,b,e,h_a, h_b, h_e |\h_1,\h_2, \s_1,\s_2)\mapsto (a,b,f,\frac{h_a}{c_\h^2}, \frac{c_\mu}{h_e} h_b, \frac{c_\s}{c_\h^2}| \mu_1,\mu_2, \nu_1,\nu_2),$$
where $f$ is given by $\eqref{ef}$, $\mu,\nu$ are given by \eqref{mu}-\eqref{nu}, and the spin graph evolves as in Figure \ref{flipgraph} but flipping all four outside branches. Furthermore, flipping twice return to the same configuration.
\end{Cor}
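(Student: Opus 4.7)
The formulas for $f$, $\mu_1, \mu_2$, $\nu_1, \nu_2$ and $h_f$ depend only on the configuration of the lifted quadrilateral $\dia ABCD\subset\cL_0$ and are insensitive to how its outer edges project to $F$; hence they are inherited verbatim from Theorems \ref{bosonicptolemy} and \ref{oddptolemy}. What requires verification is the behavior of the ratios $h_a,h_b$ on the identified edges, together with the corresponding modification of the spin graph evolution rule.

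My plan is to apply the Case~1 analysis of this appendix twice. For the identification $\vec[AB]\sim\vec[CB]$, let $g_1$ be the admissible transformation representing the resulting element of $\pi_1$ before the flip and $g_2$ its counterpart after the flip. Since the base triangle of the lift is changed from $\D BCA$ to $\D BDA$ across the flipped diagonal, these representations are related by conjugation with $D_{\sqrt{\cD}}$. The recipe given in Case~1 -- namely, decomposing each $\Up\circ P$ block as $D_{\sqrt{\chi_k}\inv}Z_{-c_{\h_k}}\cU_{\h_k}\inv Z_{h_k}$, commuting the lower-unitriangular $\cU$'s to one side, and reading off the diagonal entries -- collapses using $\chi_2=1$ and the identity $c_\mu c_\nu = c_\h c_\s$ to give $h_a' = h_a/c_\h^2$. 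Repeating the same argument for the adjacent identification $\vec[BC]\sim\vec[DC]$, but with the inverse prime transformation $P^{-}$ on the relevant side, produces $h_b' = (c_\mu/h_e)h_b$.

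For the spin structure, each of the two identifications contributes a $Z_{-1}$ factor in the conjugation relation $g_1 = D_{\sqrt{\cD}}\,g_2\,D_{\sqrt{\cD}}\inv$, coming from the parity mismatch between the number of $J$'s in $g_1$ and $g_2$ (two versus four). Geometrically, each such $Z_{-1}$ reverses the orientation of the unique edge of $\t$ traversed by the corresponding cycle. Combined with the standard flip rule of Figure~\ref{flipgraph}, which reverses the upper right branch, I expect these two reversals together with the inherent reversal built into Figure~\ref{flipgraph} to accumulate to reversals of all four outer branches simultaneously -- this is the modified evolution rule claimed in the statement.

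For the claim that flipping twice returns to the same configuration, I would substitute the formulas into themselves. In the general case analyzed after Theorem~\ref{oddptolemy}, the two-fold iterate differs from the identity by vertex rescalings with parameters $\a_1 = c_\h^2/(h_e c_\nu)$ and $\a_2 = c_\h^2/c_\mu^2$; for $F_0^3$ the identifications $\vec[AB]\sim\vec[CB]$ and $\vec[BC]\sim\vec[DC]$ force these rescalings to act consistently on the identified ratios, and the residual freedom is annihilated. Combined with the fact that all other coordinates already return to themselves after two applications, this gives strict involutivity on $\R^{6|4}$. I expect the hardest step to be the spin graph bookkeeping in the paragraph above: one must check that the two $Z_{-1}$ factors act on \emph{disjoint} outer branches, so that they do not cancel each other out, and verify that no additional sign arises from the shared vertex of the identifications.
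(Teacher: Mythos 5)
Your route is the paper's own: the formulas for $f$, $\mu$, $\nu$ and $h_f$ are inherited from Theorems \ref{bosonicptolemy} and \ref{oddptolemy}, and the two ratio formulas $h_a'=h_a/c_\h^2$ and $h_b'=c_\mu h_b/h_e$ come from running the Case~1 computation once for each adjacent identification (the diagonal-entry argument with $\chi_2=1$ and $c_\mu c_\nu=c_\h c_\s$, and the ``inverse'' version with $P^{-}$ for the second pair), exactly as in the appendix. The one place your accounting is off is the spin-graph rule: the count ``two reversals plus the inherent reversal of Figure \ref{flipgraph} gives four flipped branches'' is not the mechanism, since the reversal of $\e_3$ in Figure \ref{flipgraph} belongs to the base rule and is not one of the four extra flips. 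Rather, each of the two $Z_{-1}$ factors reverses the orientation of a single edge of the fatgraph $\t$ (the edge dual to the identified side $a$, respectively $b$), and because of the identifications each of these two fatgraph edges occurs as \emph{two} of the four outer branches of the local picture, so the two reversals account for all four branches; this also disposes of your worry about overlap, since the two identified edges are distinct and contribute disjoint pairs of local branches. With that correction, and with the involutivity check carried out by substitution as you propose (the paper merely asserts it), the argument matches the paper's.
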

Case 2: Two opposite sides. Assume $\vec[AB]$ and $\vec[DC]$ are identified. Then
$$g_1 =\Up_{h_a\inv}^{\mathcal{X}_3}\circ P_{h_e\inv, h_a\inv}^{\s,+}\circ \Up_{h_e}^{\mathcal{X}}\circ P_{h_a, h_e}^{\h,-}$$
and
$$g_2= \Up_{{h_a'}\inv}^{\mathcal{X}_3'}\circ P_{h_f\inv, {h_a'}\inv}^{\nu,-}\circ\Up_{h_f}^{\mathcal{X}\inv}\circ P_{h_a', h_f}^{\mu,+},$$
where again $g_1 = D_{\sqrt\cD}\circ g_2\circ D_{\sqrt\cD}\inv$. This time however, we see that ignoring the diagonal elements, the Borel parts are of the form
$$g_1\sim  \cU\circ J \circ \cU \circ J\inv,\tab g_2 \sim Z_{-1}\circ J\circ \cU \circ J \circ \cU.$$
In particular we see that no extra signs are needed, i.e., the spin graph evolution stays the same on the outside branches. However, the same argument as in the general case does not work here since the diagonal entries are more complicated. However, by commuting the Borel part across the $J$'s, they are of the form $g_1 \sim \cU \circ \cV$ and $g_2\sim \cV\circ \cU$, where $\cV$ is upper Borel. Hence we only need to look at the off diagonal even entry (i.e., top right corner) instead, which is very simple.

By carefully expanding the matrix, the top right entries are given by
\Eqn{
(g_1)_{1,3}&=\frac{c_\s h_a c_\h^2}{h_e \sqrt{\mathcal{X}_3\mathcal{X}}},\\
(D_{\sqrt\cD}\circ g_2\circ D_{\sqrt\cD}\inv)_{1,3}&=c_\nu^2 c_\mu\frac{h_a'}{h_f}\frac{\cD}{\sqrt{\mathcal{X}_3'\mathcal{X}}}.
}
Now using the fact that $\mathcal{X}_3=\frac{bd}{e^2}, \mathcal{X}_3'=\frac{f^2}{bd}$ and $\frac{ef}{bd}=\cD$ so that $\sqrt\mathcal{X}_3' = \sqrt\mathcal{X}_3\cD$, we get
$$\frac{c_\s h_a c_\h^2}{h_e}=c_\nu^2 c_\mu\frac{h_a'}{h_f}.$$
With $h_f=\frac{c_\s}{c_\h^2}$ and $c_\h c_\s=c_\mu c_\nu$ we finally obtain the relation
$$h_a'=\frac{h_f}{h_e} \frac{c_\h}{c_\nu}h_a=\frac{c_\mu}{c_\h^2 }\frac{h_a}{h_e}.$$
Again if $\vec[BC]$ and $\vec[AD]$ are identified, a similar argument shows that the rule is given by
$$h_b'=\frac{h_b}{h_e h_f}\frac{c_\s}{c_\nu}=\frac{c_\h^2}{c_\nu}\frac{h_b}{h_e},$$
and no extra signs are needed.

\begin{Cor} In the case of $F_1^1$, where $\vec[AB]$ is identified with $\vec[DC]$ and $\vec[BC]$ is identified with $\vec[AD]$, the Ptolemy transform $\R^{6|4}\to \R^{6|4}$ is given by
$$(a,b,e,h_a, h_b, h_e |\h_1,\h_2, \s_1,\s_2)\mapsto (a,b,f,\frac{c_\mu}{c_\h^2}\frac{h_a}{h_e}, \frac{c_\h^2}{c_\nu}\frac{h_b}{h_e},\frac{c_\s}{c_\h^2}| \mu_1,\mu_2, \nu_1,\nu_2),$$
where $f$ is given by $\eqref{ef}$, $\mu,\nu$ are given by \eqref{mu}-\eqref{nu}, and the spin graph evolves as in Figure \ref{flipgraph} but all four outside branches are unflipped. Furthermore, flipping twice return to the same configuration.
\end{Cor}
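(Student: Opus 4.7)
The plan is to reduce this Corollary to the generic flip identities of Theorem~\ref{oddptolemy} plus a calculation of the two monodromies introduced by the side identifications $\vec[AB]\sim\vec[DC]$ and $\vec[BC]\sim\vec[AD]$. The first observation is that the new $\lambda$-length $f$, the odd tuples $(\mu_1,\mu_2)$, $(\nu_1,\nu_2)$, and the interior ratio $h_f=c_\s/c_\h^2$ are intrinsic invariants of the image quadrilateral $\dia ABCD\sub\cL_0$ and are read off directly from its lifted vertices; they are therefore insensitive to how the boundary edges are glued on the surface and may simply be copied from \eqref{ef} and \eqref{mu}--\eqref{nu}.

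The only nontrivial quantities are the two outer ratios $h_a'$ and $h_b'$, since the gluings turn what would have been ``attaching triangle rules'' into monodromy constraints around the two generators of $\pi_1(F_1^1)$. For each of these loops I will write the admissible element $g_1\in \SL(1|2)$ encoding the monodromy before the flip and $g_2$ after, and use the fact that the base triangles $\D BCA$ and $\D BDA$ of the two lifts are related by $D_{\sqrt\cD}$ (as in the construction of Theorem~\ref{main_lift}), so that $g_1=D_{\sqrt\cD}\circ g_2\circ D_{\sqrt\cD}\inv$. Expanding each side as a product of prime and upside-down transformations (following Case~2 of the preceding discussion) gives an explicit matrix equation in which $h_a'$ (resp.\ $h_b'$) is the only unknown.

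The principal obstacle, and the place where the argument must deviate from the generic flip, is that the diagonal entries of $g_1$ and $g_2$ do not separate cleanly here: after pushing all $J$'s to one side, $g_1$ has the Borel type $\cU\circ\cV$ while $g_2$ has type $\cV\circ\cU$, and the two diagonals mix the $\h$ and $\s$ chains. To avoid this I would compare the $(1,3)$ top-right entries instead; these depend only on the leading Borel contribution and on the products of cross-ratios $\chi_3\chi$ and $\chi_3'\chi$. Substituting the key identities $\sqrt{\chi_3'}=\sqrt{\chi_3}\,\cD$ (a repackaging of \eqref{ef}), $c_\h c_\s=c_\mu c_\nu$, and $h_f=c_\s/c_\h^2$ collapses the resulting equation to $h_a'=\dfrac{c_\mu}{c_\h^2}\dfrac{h_a}{h_e}$, and the analogous loop around $\vec[BC]\sim\vec[AD]$ yields $h_b'=\dfrac{c_\h^2}{c_\nu}\dfrac{h_b}{h_e}$.

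Two verifications close the statement. The assertion that no outside branch flips orientation reduces to counting $J$ factors in both $g_1$ and $g_2$: the parities agree, so no extra $Z_{-1}$ insertion is required relative to the standard evolution of Figure~\ref{flipgraph}. The involutivity claim ``flipping twice returns to the same configuration'' is then a direct substitution: the doubled action on $(h_a,h_b,h_e,\h,\s)$ simplifies, via $\chi\mapsto\chi\inv$, the involution $\h\leftrightarrow\s$, and the identity $c_\mu c_\nu=c_\h c_\s$, to the original tuple up to an overall vertex rescaling $\alpha$, and hence to the same equivalence class in $C(F_1^1,\D)$.
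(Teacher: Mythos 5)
Your proposal follows the paper's own argument essentially step for step: the same monodromy comparison $g_1=D_{\sqrt\cD}\circ g_2\circ D_{\sqrt\cD}\inv$ for each identified pair of sides, the same observation that the $\cU\circ\cV$ versus $\cV\circ\cU$ Borel structure forces one to compare the $(1,3)$ entries rather than the diagonals, and the same key identities $\sqrt{\chi_3'}=\sqrt{\chi_3}\,\cD$, $c_\h c_\s=c_\mu c_\nu$, $h_f=c_\s/c_\h^2$ to extract $h_a'$ and $h_b'$, with the sign claim settled by the parity of $J$ factors. This matches the paper's Case~2 treatment in Appendix~\ref{sec:specialPtolemy}, so the proof is correct and not a different route.
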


\end{document}